\documentclass{article}

\usepackage[colorlinks,linkcolor=black,citecolor=black]{hyperref}
\usepackage{bm}
\usepackage{bbm}
\usepackage{mathrsfs}
\usepackage{amsmath}
\usepackage{amsthm}
\usepackage{amssymb}
\usepackage{mathdots}
\usepackage{multirow}
\usepackage{geometry}
\usepackage{mathtools}
\usepackage{fancyhdr}
\usepackage{authblk}
\usepackage[pagewise]{lineno}
\usepackage{indentfirst}
\usepackage[backend=biber,style=numeric,giveninits=true,sorting=nyt,maxbibnames=99]{biblatex}
\allowdisplaybreaks
\numberwithin{equation}{section}
\theoremstyle{plain}
\newtheorem{theorem}{Theorem}[section]
\newtheorem{proposition}[theorem]{Proposition}
\newtheorem{lemma}[theorem]{Lemma}
\newtheorem{corollary}[theorem]{Corollary}
\newtheorem{claim}[theorem]{Claim}

\theoremstyle{definition}

\newtheorem{remark}[theorem]{Remark}
\newcommand{\re}{\operatorname{Re}}

\newcommand{\op}{\operatorname{Op}}
\newcommand{\sgn}{\operatorname{sgn}}

\DeclareFieldFormat[article]{title}{#1}
\DeclareFieldFormat[article]{pages}{#1}
\DeclareFieldFormat[article]{number}{no\adddot\space#1}
\DeclareFieldFormat[article]{eid}{Art\adddot\space#1}

\renewbibmacro*{in:}{%
	\ifentrytype{article}
	{}
	{\printtext{\bibstring{in}\intitlepunct}}}
\renewbibmacro*{journal+issuetitle}{%
	\usebibmacro{journal}%
	\setunit*{\addspace}%
	\printfield{volume}%
	\setunit*{\addspace}%
	\usebibmacro{issue+date}%
	\setunit{\addcomma\space}%
	\printfield{number}%
	\setunit{\addcomma\space}%
	\printfield{eid}%
	\newunit}

\title{Linear Stability and Inviscid Damping of Monotone Shear Flows for 2D Compressible Euler Equations}
\author{
	Zhile Li$^{1,*}$,
	Junyan Zhang$^{1,\dagger}$,
	and Lifeng Zhao$^{1,\ddagger}$
}

\date{}

\begin{document}
	\maketitle
	
	\begingroup
	\renewcommand{\thefootnote}{}
	\footnotetext{
		$^{1}$School of Mathematical Sciences,
		University of Science and Technology of China,
		Hefei 230026, Anhui, China.
		
		\quad $^{*}$Email: \texttt{lizhile@mail.ustc.edu.cn}
		\qquad
		$^{\dagger}$Email: \texttt{yx3x@ustc.edu.cn}
		\qquad
		$^{\ddagger}$Email: \texttt{zhaolf@ustc.edu.cn}
	}
	\endgroup

	\begin{abstract}
		
		We study 2D compressible Euler equations linearized around monotone shear flows \((U(y),0)\) on \(\mathbb{T} \times \mathbb{R}\). The shear rate \(U'\) is strictly positive, not necessarily close to any constant, and varies sufficiently slowly. For every fixed Mach number \(M > 0\), we prove that the density and the irrotational velocity obey algebraic growth bounds, whereas the solenoidal velocity undergoes componentwise inviscid damping. Although a non-uniform shear couples the transverse Fourier frequencies and precludes the full Fourier reduction available for Couette flow, we are still able to recover the Couette rates without loss. The proof hinges on two new ingredients: a time-dependent pseudodifferential energy that restores a coercive structure for the variable-coefficient shear dynamics, and terminal-time-dependent higher- and lower-order weighted energies that capture the long-time effects of shear mixing.
		
	\end{abstract}

	\section{Introduction}
	
	The compressible Euler equations govern the motion of inviscid compressible fluids and constitute a fundamental model in fluid dynamics. In this paper, we consider the isentropic compressible Euler equations on \(\mathbb{T} \times \mathbb{R}\),
	\begin{equation}
		\label{Euler}
		\begin{cases}
			\partial_t \widetilde{\rho} + \nabla \cdot (\widetilde{\rho} u) = 0, \\
			\partial_t (\widetilde{\rho} u) + \nabla \cdot (\widetilde{\rho} u \otimes u) + \frac{1}{M^2} \nabla p(\widetilde{\rho}) = 0.
		\end{cases}
	\end{equation}
	Here \(\widetilde{\rho}\) and \(u\) denote the density and velocity, respectively, \(M\) is the Mach number, and \(p = p(\widetilde{\rho})\) is the isentropic pressure law.
	
	A natural class of steady solutions of \eqref{Euler} is given by shear flows of the form
	\[u_E = (U(y), 0), \qquad \widetilde{\rho}_E = 1.\]
	Given their ubiquity in fluid dynamics, understanding the stability and long-time behavior of shear flows has been a central topic in the study of hydrodynamic stability \cite{DrazinReid2004}. To analyze the stability of this equilibrium, we consider perturbations around the shear flow. More precisely, writing
	\[\widetilde{\rho} = \widetilde{\rho}_E + \rho, \qquad u = u_E + v,\]
	substituting this decomposition into \eqref{Euler}, and neglecting terms that are nonlinear in the perturbation, we obtain the following linearized system:
	\begin{equation}
		\label{perturbation}
		\begin{cases}
			\partial_t \rho + U(y) \partial_x \rho + \nabla \cdot v = 0, \\
			\partial_t v + U(y) \partial_x v + (U'(y) v_2, 0) + \frac{1}{M^2} \nabla \rho = 0,
		\end{cases}
	\end{equation}
	where we set \(p'(1) = 1\). The stability problem is therefore reduced to understanding the long-time behavior of solutions to \eqref{perturbation}.
	
	The study of shear-flow stability has a much longer history in the incompressible setting, where the classical theory began with questions of spectral stability. Rayleigh \cite{rayleigh1879} studied the normal-mode problem for general shear flows and established the celebrated inflection-point criterion, which gives a necessary condition for spectral instability. For the plane Couette flow, Kelvin \cite{kelvin1887} and Orr \cite{orr1907} investigated the evolution of perturbations through the initial-value problem. Their analyses already exhibited the phase-mixing mechanism and transient growth underlying what is now known as inviscid damping. Case \cite{case1960} later studied the inviscid initial-value problem through a continuous-spectrum analysis and formally derived the associated velocity decay.
	
	Within the incompressible setting, extending inviscid damping beyond the explicitly solvable Couette flow to general shear profiles is considerably more difficult. Bouchet--Morita \cite{bouchet2009} gave heuristic arguments suggesting that linear stability and inviscid damping should persist for general monotone shear flows. Stepin \cite{Stepin1995} studied the problem from a spectral point of view and obtained asymptotic stability results under suitable assumptions. Zillinger \cite{Zillinger2017} developed a rigorous linear theory for a large class of strictly monotone shear flows, establishing stability, scattering, and inviscid damping with optimal decay rates in both infinite and finite periodic channels. Wei--Zhang--Zhao \cite{wei2018} established velocity decay estimates and scattering for a class of monotone shear flows in a finite channel. The same authors later proved linear damping and vorticity depletion for possibly non-monotone shear flows with stationary streamlines under a suitable spectral assumption \cite{wei2019}. Jia subsequently gave an alternative proof of the sharp decay rates and derived precise asymptotics \cite{jia2020}, and also established linear damping in Gevrey spaces \cite{jia2020a}. A different approach based on the conjugate-operator method was developed by Grenier--Nguyen--Rousset--Soffer \cite{grenier2020}. On the other hand, Deng--Zillinger \cite{deng2020} showed that monotonicity alone does not lead to a uniform linear damping theory, identifying a resonance-chain obstruction for a class of smooth bi-Lipschitzian shear profiles. This indicates that, beyond the exactly solvable Couette profile, additional structural information on the shear is relevant to the long-time dynamics.  Related spectral and damping questions have also been studied in the free-boundary setting: See Liu--Zeng \cite{LiuZeng2025WWST, LiuZeng2025spectra} for the analysis of gravity(-capillary) water waves linearized around monotone shear flows.
	
	At the nonlinear level, still in the two-dimensional incompressible Euler setting, Bedrossian--Masmoudi \cite{bedrossian2015} rigorously established nonlinear inviscid damping near the Couette flow in $\mathbb{T}\times\mathbb{R}$.  For the case of a finite channel, Lin--Zeng \cite{LinZeng2011} identified a regularity dichotomy near Couette flow, and the nonlinear asymptotic stability results for classes of monotone shear flows were later obtained by Ionescu--Jia \cite{ionescu2023} and Masmoudi--Zhao \cite{masmoudi2024}.
	
	Compared with the incompressible setting, compressibility introduces additional couplings among the density, irrotational velocity, and solenoidal velocity, leading to more complicated dynamics. Despite these difficulties, the stability of compressible shear flows has been investigated extensively through normal-mode and spectral methods \cite{Blumen1975, blumen1970, Drazin1977, leeslin1946, Subbiah1990}. These studies yielded stability criteria and provided information about unstable modes and spectral properties. However, such modal analyses do not by themselves describe the detailed time evolution of perturbations or provide quantitative long-time estimates for the corresponding initial-value problem. 
	
	A significant advance in this direction was achieved by Antonelli--Dolce--Marcati \cite{antonelli2021}, who studied the linearized compressible Euler equations around the homogeneous Couette flow. By exploiting the explicit structure of the Couette profile, they introduced sheared coordinates and reduced the problem to a family of non-autonomous equations for individual Fourier modes. Combined with carefully designed time-dependent energy estimates, this approach yielded precise growth and decay bounds for the density perturbation, the irrotational component, and the solenoidal component of the velocity perturbation. The corresponding Couette-flow problem for the two-dimensional non-isentropic compressible Euler equations was studied by Zhai \cite{Zhai2023Couette}. 
	
	However, the homogeneous Couette flow enjoys a particularly favorable structure that is absent for general shear profiles. In particular, the linear dependence of the velocity on the transverse variable yields explicit formulas in sheared coordinates and preserves a mode-by-mode Fourier representation of the linearized dynamics. Once \(U'\) is nonconstant, even if its variation is arbitrarily small, the transverse Fourier frequencies are coupled and the equations in sheared coordinates become genuinely variable-coefficient systems.
	
	This raises a fundamental question in compressible hydrodynamic stability: to what extent do the long-time mixing and inviscid damping mechanisms observed around homogeneous Couette flow persist for non-uniform shear profiles? From the mathematical viewpoint, the main difficulty is not merely the size of the deviation from Couette flow, but rather the loss of the exact commutation properties underlying the Couette analysis. The resulting coupling between acoustic waves and vorticity is governed by variable-coefficient operators. For such profiles, defining the basic energy through Fourier multipliers is generally not viable, since differentiating such an energy can produce high-order commutator terms that cannot be absorbed by the available dissipation.
	
	Motivated by this question, we consider monotone shear flows with slowly varying shear rate: the spatial derivatives of \(U'\) are required to be small, but \(U'\) need not be close to a constant. For example, consider a family of profiles whose shear rates are given by
	\[U'(y) = \lambda \left( 1 + \frac{1}{2} \tanh \left( \frac{y}{\ell} \right) \right), \qquad \lambda > 0, \ell \ge 1.\]
	For every fixed integer \(n \ge 0\), we have
	\[\frac{\lambda}{2} \le U'(y) \le \frac{3\lambda}{2}, \qquad \left\| U'' \right \|_{W^{n,\infty}} \lesssim \frac{\lambda}{\ell}, \qquad \inf_{c \in \mathbb{R}} \left\| U' - c \right\|_{L^\infty} = \frac{\lambda}{2}.\]
	Thus, as \(\ell\) becomes large, the spatial derivatives of the shear rate become small, while \(U'\) does not become close in \(L^\infty\) to any constant. In this sense, slow variation refers to the spatial scale of the variation rather than to its amplitude.
	
	We also mention an earlier preprint of Antonelli--Dolce--Marcati \cite{antonelli2020near}, which studies monotone shear flows perturbatively close to Couette and states polynomial growth and inviscid damping estimates with a small loss in the temporal exponents. The class of shear profiles considered here is not subject to this perturbative closeness assumption. Our analysis is based on a time-dependent pseudodifferential energy adapted to the variable-coefficient shear dynamics. In this framework, the relevant commutator errors are of lower order and are absorbed by the dissipation generated by the energy weights. Combined with terminal-time-dependent higher- and lower-order weighted energies, this yields the Couette-scale growth and componentwise decay bounds without any loss in the temporal exponents.
	
	\subsection{Main result}
	
	To formulate our main result, we introduce the Helmholtz decomposition of the velocity perturbation. Let
	\[\alpha = \nabla \cdot v, \qquad \omega = \nabla^\perp \cdot v,\]
	where \(\nabla^\perp = (- \partial_y, \partial_x)\). Then the velocity perturbation admits the decomposition
	\begin{equation}
		\label{Helmholtz}
		v = \nabla \Delta^{-1} \alpha + \nabla^\perp \Delta^{-1} \omega \eqqcolon v_{\mathrm{irr}} + v_{\mathrm{sol}}.
	\end{equation}
	Here \(v_{\mathrm{irr}}\) and \(v_{\mathrm{sol}}\) denote the irrotational and solenoidal components of the velocity perturbation, respectively. In terms of \(\rho,\alpha,\omega\), the linearized system \eqref{perturbation} can be rewritten as
	\begin{equation}
		\label{rho_alpha_omega}
		\begin{cases}
			\partial_t \rho + U \partial_x \rho + \alpha = 0,\\
			\partial_t \alpha + U \partial_x \alpha + 2 U' \partial_x (\partial_y \Delta^{-1} \alpha + \partial_x \Delta^{-1} \omega) + \frac{1}{M^2} \Delta \rho = 0,\\
			\partial_t \omega + U \partial_x \omega - U' \alpha = U'' (\partial_y \Delta^{-1} \alpha + \partial_x \Delta^{-1} \omega).
		\end{cases}
	\end{equation}
	
	Our main result is stated as follows.
	
	\begin{theorem}
		\label{thm main}
		Fix \(M > 0\). Assume that the shear profile \(U\) satisfies
		\[0 < U'_{\inf} \le U'(y) \le U'_{\sup} < \infty, \qquad \|U''\|_{W^{n,\infty}} \le \epsilon,\]
		where \(n\) is a sufficiently large fixed integer and \(\epsilon = \epsilon(U'_{\inf}, U'_{\sup}, M)\) is a sufficiently small threshold depending only on \(U'_{\inf}, U'_{\sup}\) and \(M\). Let the initial data satisfy \(\rho_{in} \in L_x^2 H_y^5\) and \(\alpha_{in}, \omega_{in} \in H_x^{-1} H_y^4\), and assume they all have zero \(x\)-average, namely,
		\[\int_{\mathbb{T}} \rho_{in} (x,y) dx = \int_{\mathbb{T}} \alpha_{in} (x,y) dx = \int_{\mathbb{T}} \omega_{in} (x,y) dx = 0,\qquad \forall\, y\in\mathbb{R}.\]
		We measure the size of the initial data by
		\begin{equation}
			\label{N_in}
			\mathcal{N}_{in}^2 = \frac{1}{M^2} \| \rho_{in} \|_{L_x^2 H_y^5}^2 + \| \alpha_{in} \|_{H_x^{-1} H_y^4}^2 + \| \omega_{in} \|_{H_x^{-1} H_y^4}^2.
		\end{equation}
		Then the corresponding solution of \eqref{rho_alpha_omega} satisfies the following estimates for every \(t \ge 0\):
		\begin{align}
			\label{thm:acoustic growth} \frac{1}{M} \| \rho \|_{L_x^2 L_y^2} (t) + \| v_{\mathrm{irr}} \|_{L_x^2 L_y^2} (t) &\lesssim \mathcal{N}_{in} \langle t \rangle^{1/2},\\
			\label{thm:horizontal damping} \| (v_{\mathrm{sol}})_1 \|_{L_x^2 L_y^2} (t) &\lesssim \mathcal{N}_{in} \langle t \rangle^{-1/2},\\
			\label{thm:vertical damping} \| (v_{\mathrm{sol}})_2 \|_{L_x^2 L_y^2} (t) &\lesssim \mathcal{N}_{in} \langle t \rangle^{-3/2},
		\end{align}
		where \((v_{\mathrm{sol}})_1, (v_{\mathrm{sol}})_2\) denote the two components of \(v_{\mathrm{sol}}\) and the implicit constants in \(\lesssim\) are allowed to depend on \(U'_{\inf}\), \(U'_{\sup}\), and \(M\).
	\end{theorem}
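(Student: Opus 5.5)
We propose to work in sheared coordinates adapted to the profile: set \(X = x - U(y)t\), \(Y = y\). Transport becomes trivial, and \(\partial_y\) turns into \(\partial_Y - tU'(Y)\partial_X\). After Fourier transforming in \(X\) only (frequency \(k \neq 0\), which is legitimate because the data have zero \(x\)-average), \(\partial_y\) becomes the variable-coefficient operator \(D_t = \partial_Y - ikt\,U'(Y)\). The symbol of \(-\Delta\) is then \(p(t,Y,\eta) = k^2 + (\eta - ktU'(Y))^2\). For Couette flow this is a Fourier multiplier. Here we plan to quantize it as a time-dependent pseudodifferential operator \(P_t = \op(p)\), with \(Y\)-dependence through \(U'\).

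Following Antonelli--Dolce--Marcati, we introduce the symmetrized unknowns
\[
R = \tfrac1M P_t^{1/4}\rho, \qquad A = P_t^{-1/4}\alpha, \qquad \Omega = P_t^{-1/2}\omega .
\]
The powers are chosen so that the acoustic part becomes skew-adjoint at leading order. We then compute the evolution of these unknowns modulo commutators. The principal terms are \(\partial_t p^{\pm1/4}\), which produce the Couette weights \(\partial_t p / p\). The new terms are commutators \([P_t^{s},U']\) and \([P_t^{s},U'']\). By symbolic calculus these are lower order, of size \(\epsilon\langle kt\rangle^{-1}\) times the principal part, since every \(Y\)-derivative of the symbol falls on \(U'\) and costs a factor \(\|U''\|_{W^{n,\infty}}\le\epsilon\).

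The energy is built as \(E(t) = \|R\|^2 + \|A\|^2 + \|\Omega\|^2\) plus a small cross term \(\beta\,\re\langle \op(\partial_t p/p^{\cdot})R, A\rangle\). The cross term repairs the non-coercive part \(\tfrac{\partial_t p}{4p}(|R|^2 - |A|^2)\), in the same way as in the Couette case, so that \(\tfrac{d}{dt}E\) has a good-signed dissipation \(-\int \frac{|\partial_t p|}{p}(\dots)\) up to integrable errors. Here the positivity \(U'\ge U'_{\inf}\) is used to make the time derivative of the symbol sign-definite once \(|\eta/k|\lesssim t\). The commutator errors must be absorbed either by this dissipation or by an integrable factor \(\epsilon\langle t\rangle^{-2}\). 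I expect this to be the main obstacle: a sharp Gårding inequality, or a Fefferman--Phong type lower bound, is needed on symbols that are anisotropic in \(\eta - ktU'\), and these symbols do not lie in a standard class uniformly in \(t\). I would first try rescaling \(\eta \mapsto (\eta - ktU')/\langle kt\rangle\) and checking that the rescaled symbols lie in \(S^0_{1,0}\) with \(t\)-uniform seminorms controlled by \(\epsilon\). The requirement of finitely many derivatives is what dictates the choice of \(n\).

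The coupling \(U'A\) in the \(\Omega\) equation contributes \(p^{-1/4}\) times a forcing, and this must be integrable in \(t\). Its size is only \(\langle kt\rangle^{-1/2}\), which is not integrable. To handle it, we use the terminal-time trick: for each \(T\) we define weighted energies with weights \(m_T(t,\eta)\) adapted to the window \([0,T]\). We also introduce higher-order versions, with \(Y\)-derivatives up to 4 or 5 (hence the \(H^4_y\)/\(H^5_y\) data), and lower-order versions (negative powers of \(P_t\)), and we show that each is bounded by \(\mathcal N_{in}^2\) uniformly in \(T\). From this we get \(\|R\|,\|A\|,\|\Omega\|\lesssim\mathcal N_{in}\) with the extra regularity, and the Couette analysis then converts these into the stated rates. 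Indeed \(\|\rho\|\sim\|P_t^{-1/4}R\|\) with \(p^{-1/4}\lesssim\langle t\rangle^{1/2}\langle\eta\rangle^{1/2}\), while \(v_{\mathrm{irr}}=\nabla\Delta^{-1}\alpha\) is of size \(p^{-1/2}P_t^{1/4}A\). This gives \eqref{thm:acoustic growth}. For the solenoidal part, \((v_{\mathrm{sol}})_2 = \partial_x\Delta^{-1}\omega\) has symbol \(k p^{-1}\cdot p^{1/2}\), giving \(\langle t\rangle^{-2}\) gains when traded against two extra \(Y\)-derivatives, and \((v_{\mathrm{sol}})_1\) gains \(\langle t\rangle^{-1}\). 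We combine this with the algebraic loss from \(\Omega\)'s forcing, and finally take \(T=t\), which yields \eqref{thm:horizontal damping} and \eqref{thm:vertical damping}. Equivalence of the pseudodifferential norms with the physical norms again uses symbolic calculus with \(O(\epsilon)\) errors.
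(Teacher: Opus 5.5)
Your proposal has a genuine gap: the energy fails already for Couette flow, because your symmetrization drops a principal-order term. In the $\alpha$-equation, the term $2ikU'D_t\Delta_t^{-1}\alpha$ has principal symbol $2kU'(\eta-ktU')/p=-\partial_t p/p$. This is exactly the order of the time derivatives of your weights. Including it, with $R=\frac1M P_t^{1/4}\rho$ and $A=P_t^{-1/4}\alpha$, the diagonal part of $\frac{d}{dt}\frac12(\|R\|^2+\|A\|^2)$ is
$\frac{\partial_t p}{4p}(|R|^2+3|A|^2)=\frac{\partial_t p}{2p}(|R|^2+|A|^2)+\frac{\partial_t p}{4p}(|A|^2-|R|^2)$,
not $\frac{\partial_t p}{4p}(|R|^2-|A|^2)$. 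A cross term can only remove the mean-zero second piece. The first piece makes your energy grow like $p(t)/p(0)$, i.e. like $\langle t\rangle^2$ at a fixed frequency. Indeed, for Couette, $\ddot{\hat\rho}-\frac{\dot p}{p}\dot{\hat\rho}+\frac{p}{M^2}\hat\rho\approx 0$, and WKB gives $|\hat\rho|\sim p^{1/4}$ and $|\hat\alpha|\sim p^{3/4}$. So the weights that give a bounded energy are $p^{-1/4}$ on $\rho$ and $p^{-3/4}$ on $\alpha$, i.e. the wave energy times $p^{-3/2}$. This is what the paper's weight $T$ of order $1/\gamma=3/2$ encodes; the value $\gamma=\frac23$ is forced by balancing against the cross term $\frac13\re\langle PTSA,PTSR\rangle$. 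Your own conversion step shows the inconsistency: with $\rho=MP_t^{-1/4}R$ and $p^{-1/4}\le |k|^{-1/2}$, a bounded $\|R\|$ would make $\|\rho\|$ bounded in time, contradicting the generic $\langle t\rangle^{1/2}$ growth.

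Even with the right exponents, the variable-coefficient part is not closed. Quantizing $p$ directly puts the $Y$-dependence through $ktU'(Y)$, so every $Y$-derivative of a symbol costs $\epsilon kt$, not $\epsilon$. The same happens for every $Y$-derivative of the system, since $[\partial_Y,D_t]=-iktU''$, so your higher-order versions with $\partial_Y^4,\partial_Y^5$ break down. The paper avoids this in three ways:
\begin{itemize}
\item It uses only $V$-conjugated Fourier multipliers, which commute exactly with $D_t$ and $\Delta_t$.
\item It never differentiates the acoustic system. Higher- and lower-order information comes from the terminal-time weights $G_s,H_s$, giving $\langle s\rangle^{2}$ and $\langle s\rangle^{-4}$ bounds that interpolate to $\langle s\rangle^{1/2}$.
\item It applies derivatives only to the vorticity propagator, and there only through $\frac{1}{U'}\partial_y$, whose commutators with $D_t$ are $t$-independent.
\end{itemize}

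Your claimed $\epsilon\langle kt\rangle^{-1}$ size for the commutator errors is false on the critical layer $\eta\approx ktU'(Y)$, where they are $O(\epsilon)$ uniformly in $t$. At every time some frequencies sit on that layer, so these errors are not integrable in operator norm. Meanwhile your dissipation $|\partial_t p|/p$ degenerates there. The paper instead uses dissipation that is maximal on the layer:
\begin{itemize}
\item The $\sinh^{-1}$ weight $T$ gives $-\frac{1}{\gamma}U'|k|(-\Delta_t)^{-1/2}$.
\item The bounded $\tan^{-1}$ weight $S$ gives $-\frac{1}{\beta}U'k^2(-\Delta_t)^{-1}$.
\end{itemize}
Together these absorb all the $O(\epsilon)$ commutators.

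Finally, a terminal-time weight cannot repair the non-integrable $U'A$ forcing of the vorticity. That forcing is removed exactly by the good unknown $\Theta=W+U'R+U''D_t\Delta_t^{-1}R$; for Couette flow, $\omega+\rho$ is simply transported. The damping rates then follow from Duhamel's formula for $\Theta$, the $\widetilde{H}^r$ bounds on the propagator $\Phi(t,s)$, and the two-time mixing lemma.
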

	
	We make the following remark concerning the above result.
	
	\begin{remark}
		When the shear profile is the homogeneous Couette flow, namely, \(U(y) = y\), the upper bounds obtained in this paper agree with those established in \cite{antonelli2021}. Moreover, Antonelli--Dolce--Marcati \cite{antonelli2021} proved that the growth rate \(\langle t \rangle^{1/2}\) is attained for a generic set of initial data, showing that this exponent is sharp in the homogeneous Couette case. Consequently, the exponent in our growth bound is optimal within the class of shear flows considered here, while the two componentwise decay bounds recover the known inviscid damping upper bounds for the homogeneous Couette flow.
	\end{remark}
	
	\subsection{Strategy of the proof}
	\label{subsec strategy}
	
	The proof has two main ingredients: a basic pseudodifferential energy for the variable-coefficient acoustic-vorticity system, and terminal-time-dependent higher- and lower-order weighted energies that yield the growth and damping estimates.
	
	\subsubsection{Reformulation in sheared coordinates}
	
	The first difficulty is to construct a coercive, non-increasing energy for the linearized system in the presence of a non-uniform shear profile. The key idea is to require the time-dependent pseudodifferential weights \(T\) and \(S\) to commute exactly with the sheared derivative \(D_t\). This exact commutation preserves the highest-order cancellations of the acoustic wave system and is the main structural ingredient in our basic energy construction.
	
	We first reformulate the system mode by mode in the periodic direction. After taking the Fourier transform in \(x\), we fix a nonzero mode \(k \in \mathbb{Z} \setminus \{0\}\) and denote the corresponding Fourier modes of \(\rho\), \(\alpha\), and \(\omega\) by \(\rho_k\), \(\alpha_k\), and \(\omega_k\), respectively. Using the unitary operator \(V=e^{ikU(y)t}\), we introduce the sheared variables
	\[R = V \rho_k, \qquad A = V \alpha_k, \qquad W = V\omega_k.\]
	The transverse derivative and Laplacian in sheared coordinates are
	\[D_t \coloneqq V \partial_y V^{-1} = \partial_y - ikU'(y)t, \qquad \Delta_t \coloneqq V (-k^2 + \partial_y^2) V^{-1} = -k^2 + D_t^2.\]
	We further introduce the modified vorticity variable
	\[\Theta = W + U' R + U'' D_t \Delta_t^{-1} R \eqqcolon W + LR.\]
	The correction \(LR\) is chosen so that, upon differentiating \(\Theta\), all terms involving \(A\) cancel. In terms of \(R\), \(A\), and \(\Theta\), system \eqref{rho_alpha_omega} becomes
	\[
	\begin{cases}
		\partial_t R + A = 0, \\
		\partial_t A + 2ikU'(D_t \Delta_t^{-1} A + ik \Delta_t^{-1} (\Theta - U'R - U'' D_t \Delta_t^{-1} R)) + \frac{1}{M^2} \Delta_t R = 0,\\
		\partial_t \Theta = ikU'' \Delta_t^{-1} \Theta + 2ik^3U'' \Delta_t^{-1} U' \Delta_t^{-1} R.
	\end{cases}
	\]
	
	We shall define an energy adapted to this system. To ensure the exact commutation property described above, we construct the energy weights by conjugating suitable Fourier multipliers with \(V\). Indeed, for any Fourier multiplier \(\op(m)\), with \(m = m(\eta)\), the operator \(\mathcal{M} \coloneqq V \op(m) V^{-1}\) satisfies
	\[[\mathcal{M}, D_t] = 0, \qquad [\mathcal{M}, \Delta_t] = 0.\]
	In contrast, a fixed Fourier multiplier \(\op(m)\) satisfies
	\[[\op(m), D_t] = -ikt[\op(m), U'].\]
	The resulting commutator contains the growing factor \(t\), together with derivatives of \(U'\), and therefore cannot in general be controlled uniformly for large times. This is the basic reason for using \(V\)-conjugated weights in the energy.
	
	\begin{remark}[The limitation of shear straightening]
		A classical alternative in problems of this type is to straighten the shear by introducing \(z = U(y)\), which removes the explicit growing factor \(t\) appearing in the preceding commutator. More precisely, in the new coordinate, the sheared derivative becomes
		\[\widetilde{D}_t \coloneqq U'(U^{-1}(z)) (\partial_z - ikt).\]
		Although Fourier multipliers in the \(z\)-coordinate commute with \(\partial_z - ikt\) and therefore avoid the preceding commutator with explicit \(t\)-growth, they do not commute with the variable coefficient \(U'(U^{-1}(z))\). In an energy adapted to the natural wave structure, the highest-order acoustic terms cancel exactly, leaving these variable-coefficient commutators as leading uncanceled contributions that cannot in general be absorbed by the multiplier-generated dissipation. Thus, within the energy framework developed below, a basic energy defined solely through Fourier multipliers in the \(z\)-coordinate is generally not viable.
	\end{remark}
	
	\subsubsection{A non-increasing pseudodifferential energy}
	Our goal is to construct a non-increasing energy adapted to this system. We begin with the coupled equations for \(R\) and \(A\). Disregarding for the moment the lower-order terms carrying the factor \(2ikU'\) in the equation for \(A\), they have the structure of a wave equation, whose natural energy is
	\[\frac{1}{2M^2} \| kR \|_{L^2}^2 + \frac{1}{2M^2} \| D_tR \|_{L^2}^2 + \frac{1}{2} \|A\|_{L^2}^2.\]
	When differentiating this energy, the time derivative falling on \(D_t\) and the term \(2ikU' D_t \Delta_t^{-1} A\) omitted above produce, respectively, the indefinite contributions \(- \frac{1}{M^2} \re \langle ikU' R, D_t R \rangle\) and \(-2 \re \langle ikU'D_t \Delta_t^{-1} A, A \rangle\). Both occur at the operator scale \(U'|k|(-\Delta_t)^{-1/2}\), while all the other omitted terms are of lower order. 
	
	\paragraph{The principal weight \(T\).} To absorb these two leading terms, we construct a weight \(T\) and an auxiliary operator \(P\) so that, at the principal-symbol level,
	\[- (\partial_t T) T^{-1} \approx P^*P \approx \frac{1}{\gamma} U'(y) |k| (-\Delta_t)^{-1/2},\]
	where \(\gamma > 0\) is a constant to be chosen below. Differentiating the \(T\)-weighted wave energy then produces negative \(P\)-scale contributions that can be used to control these two terms.
	
	Formally, \(T\) can be viewed as a function of the sheared derivative, 
	\[T = \exp \left( \frac{1}{\gamma} \sinh^{-1} \left( \frac{D_t}{ik} \right)\right).\]
	Since \(\exp \left( \frac{1}{\gamma} \sinh^{-1} \xi \right) \approx \langle \xi \rangle^{(\sgn \xi)/\gamma}\), the parameter \(\gamma\) determines the frequency order of \(T\), and hence the resulting growth and decay exponents. To obtain the optimal rates permitted by this construction, we therefore seek the largest value of \(\gamma\) compatible with the energy estimate. At this stage, without the cross term, the \(A\)-component would restrict us to \(\gamma \le \frac{1}{2}\). 
	
	To enlarge the admissible range of \(\gamma\), we consider the cross term \(\lambda \re \langle PTA,PTR \rangle\), where \(\lambda > 0\) is to be chosen. At the principal \(P\)-scale, when the time derivative falls on \(R\), the identity \(\partial_t R = - A\) produces the negative contribution \(- \lambda \| PTA \|_{L^2}^2\); when it falls on \(A\), the acoustic term produces positive contributions to the \(R\)-components. The critical coefficients of \(\frac{1}{M^2} \| PTD_tR \|_{L^2}^2\) and \(\| PTA \|_{L^2}^2\) are then \(-1 + \lambda + \gamma\) and \(-1 - \lambda + 2\gamma\), respectively. Balancing these two coefficients gives \(\lambda = \frac{1}{3}\) and \(\gamma = \frac{2}{3}\). Thus, the correcting cross term enlarges the admissible value of \(\gamma\) from \(\frac{1}{2}\) to \(\frac{2}{3}\).
	
	\paragraph{The lower-order weight \(S\).} 
	
	Beyond the principal contributions used in the preceding balance, differentiating the cross term produces additional terms when the time derivative falls on \(P\) or \(T\), as well as terms arising from the lower-order part of the equation for \(A\). The highest-order terms that remain to be controlled occur at the scale \(U' |k|^2 (-\Delta_t)^{-1}\). To absorb these contributions, we introduce a second weight \(S\), whose symbolic representation and leading logarithmic time derivative are given by
	\[S = \exp \left( \frac{1}{\beta} \tan^{-1} \left( \frac{D_t}{ik} \right) \right), \qquad (\partial_t S) S^{-1} \approx - \frac{1}{\beta} U'(y) |k|^2 (- \Delta_t)^{-1}.\]
	For a sufficiently small \(\beta > 0\), the coercive negative contributions generated by differentiating \(S\) absorb the contributions described above. Since \(\tan^{-1}\) is bounded, \(S\) is uniformly bounded from above and below, so the choice of \(\beta\) does not affect the frequency orders of the energy or the resulting temporal exponents.
	
	\paragraph{The coercivity for full range of the Mach number.}  Finally, after applying the weights \(T\) and \(S\), the component \(\Theta\) is controlled by the resulting energy structure. However, the cross term needed to reach the critical value $\gamma=\frac{2}{3}$ introduces an additional coercivity issue, whose principal part reads
	\[
	\frac{1}{2}
	\operatorname{Re}\left\langle TSA,\,TS|k|(-\Delta_t)^{-1/2}U'R\right\rangle .
	\]
	If this term were controlled solely by the natural acoustic energy, then we have
	\[
	\left|\frac{1}{2}\operatorname{Re}\left\langle TSA,\,TS|k|(-\Delta_t)^{-1/2}U'R\right\rangle\right|\le M U'_{\mathrm{sup}}\|TSA\|_{L^2}\frac{1}{M}\|TS(-\Delta_t)^{-1/2}R\|_{L^2},
	\]
	where a direct coercivity argument would require a ``subsonic constraint" $M U'_{\mathrm{sup}}\le 1$. To remove this restriction, we introduce $\Xi=k(-\Delta_t)^{-1/2}(\Theta-U'R)$ and add $\frac{1}{2}\|TS\Xi\|_{L^2}^2$ to the energy. The identity
	\[
	|k|(-\Delta_t)^{-1/2}U'R=\operatorname{sgn}(k)\left[k(-\Delta_t)^{-1/2}\Theta-\Xi\right]
	\]
	leads to
	\[
	\frac{1}{2}\left|\left\langle TSA,\,TS|k|(-\Delta_t)^{-1/2}U'R\right\rangle\right|\leq\frac{1}{3}\|TSA\|_{L^2}^2+\frac{3}{8}\|TS\Theta\|_{L^2}^2+\frac{3}{8}\|TS\Xi\|_{L^2}^2,
	\]
	and accordingly we get
	\begin{align*}
		&~\frac{1}{2}\|TSA\|_{L^2}^2 +\frac{1}{2}\|TS\Theta\|_{L^2}^2+
		\frac{1}{2}\|TS\Xi\|_{L^2}^2+\frac{1}{2}\operatorname{Re}
		\left\langle TSA,\,TS|k|(-\Delta_t)^{-1/2}U'R\right\rangle \\
		\geq&~ \frac{1}{6}\|TSA\|_{L^2}^2+\frac{1}{8}\|TS\Theta\|_{L^2}^2+\frac{1}{8}\|TS\Xi\|_{L^2}^2.
	\end{align*}
	
	Combining these considerations, we shall define the basic energy as follows in order for the coercivity
	\begin{align*}
		E =& \frac{1}{2M^2} \| TSkR \|_{L^2}^2 + \frac{1}{2M^2} \| TSD_tR \|_{L^2}^2 + \frac{1}{2} \| TSA \|_{L^2}^2 + \frac{1}{2} \| TS \Theta \|_{L^2}^2 \\
		&+ \frac{1}{2} \| TS k (- \Delta_t)^{-1/2} (\Theta - U'R) \|_{L^2}^2 + \frac{1}{3} \re \langle PTSA, PTSR \rangle.
	\end{align*}
	
	To justify this construction, it remains to control the commutators of the weights with \(U'\) and \(U''\), as well as those involving the auxiliary operator \(P\). Pseudodifferential calculus shows that, under the slow-variation assumption, these commutators are of lower order and sufficiently small to be absorbed by the lower-order dissipation generated by \(S\). This yields the non-increasing property of the basic energy.
	
	\subsubsection{Terminal-time-dependent weighted energies}
	The second main ingredient consists of two terminal-time-dependent weights. The higher-order weighted energy compensates for the negative order of the basic weight \(TS\) in the negative-frequency region, whereas the lower-order weighted energy yields a negative-order estimate with decay in the terminal time.
	
	A crucial point is that these terminal-time-dependent weights remain compatible with the variable-coefficient basic energy; namely, they are chosen to commute exactly with \(D_t\). However, they do not commute with \(U'\) and \(U''\), giving rise to additional commutator errors. The careful choice of these new weights ensures that their time derivatives generate additional dissipation, which, under the assumption \(\| U'' \|_{W^{n,\infty}} \le \epsilon\), absorbs these commutator errors.
	
	\paragraph{Higher-order weighted energy.}
	The basic weight \(TS\) has negative order in the negative-frequency region, whereas it already provides sufficient positive regularity in the positive-frequency region. Hence, to obtain the desired \(L^2\)-bounds, we need to strengthen the energy only in the negative-frequency region. We therefore choose a positive function \(g\) of order \(1\) for negative frequencies and order \(0\) for positive frequencies, with \(g' < 0\).
	
	For each fixed terminal time \(s \ge 0\), we introduce
	\[G_s = g \left( \frac{D_t}{ik} + 2U'_{\sup} (t-s) \right).\]
	When differentiating the \(G_s\)-weighted energy, we distinguish whether the time derivative falls on \(G_s\) or on the remaining factors. Since \(G_s\) commutes with \(D_t\), \(\Delta_t\), \(T\), and \(S\), the terms in the latter class have the same structure as the corresponding terms in the basic energy estimate, apart from commutator errors involving \(G_s\). The former class contains the essential new contribution. At the principal-symbol level,
	\[(\partial_t G_s) G_s^{-1} \approx (2U'_{\sup} - U'(y)) g' \left( \frac{D_t}{ik} + 2U'_{\sup} (t-s) \right) g \left( \frac{D_t}{ik} + 2U'_{\sup} (t-s) \right)^{-1} < 0.\]
	This favorable negative sign is a consequence of two choices: the asymmetric frequency orders of \(g\) allow it to be chosen with \(g' < 0\) throughout the frequency space, while the terminal-time-dependent translation changes the coefficient arising from time differentiation from \(-U'(y) < 0\) to \(2U'_{\sup} - U'(y) > 0\). Combining the resulting dissipative contribution with the basic energy estimate, we conclude that the \(G_s\)-weighted energy is non-increasing on \(0 \le t \le s\).
	
	At the terminal time \(t = s\), the translation vanishes, and comparison of the weighted energies at \(t = 0\) and \(t = s\) yields the desired higher-order estimate.
	
	\paragraph{Lower-order weighted energy.}
	For the complementary lower-order estimate, we proceed similarly. For each fixed terminal time \(s \ge 0\), we introduce
	\[H_s = h \left( \frac{D_t}{ik} + \frac{1}{2} U'_{\inf} (t-s) \right),\]
	where \(h\) is positive, of order \(-2\) in the negative-frequency region and of order \(0\) in the positive-frequency region, with \(h' > 0\) throughout the frequency space.
	
	As in the \(G_s\)-weighted estimate, we focus on the terms generated when the time derivative falls on \(H_s\). To leading order,
	\[(\partial_t H_s) H_s^{-1} \approx \left( \frac{1}{2} U'_{\inf} - U'(y) \right) h' \left( \frac{D_t}{ik} + \frac{1}{2} U'_{\inf} (t-s) \right) h \left( \frac{D_t}{ik} + \frac{1}{2} U'_{\inf}(t-s) \right)^{-1} < 0.\]
	Unlike in the higher-order construction, the favorable negative sign is already present in the unshifted case and is preserved by the translation, whose role here is instead to generate decay in \(s\). At \(t = 0\), the translation produces a factor of order \(\langle s \rangle^{-2}\), owing to the order \(-2\) of \(h\) in the negative-frequency region, while at \(t = s\) the translation vanishes. Comparing the resulting non-increasing \(H_s\)-weighted energy at \(t=0\) and \(t=s\) then yields the desired lower-order estimate with decay in \(s\).
	
	Interpolating between the higher- and lower-order estimates yields growth bounds for the density perturbation and the irrotational component of the velocity perturbation.
	
	An advantage of this construction is that both estimates are derived from the same basic energy, without differentiating the system or introducing separate energies for derivatives of the unknowns. Moreover, the terminal-time dependence converts the monotonicity of the weighted energies into explicit growth and decay rates in \(s\).
	
	Finally, we apply Duhamel's formula to \(\Theta\) and recover \(W\) from \(W = \Theta - LR\). Combining the resulting representation with the propagator bounds, the mixing estimates, and the higher- and lower-order energy estimates yields the componentwise decay bounds for \(v_{\mathrm{sol}}\) and completes the proof of the main theorem.
	
	\subsection{Outline of the paper}
	
	The remainder of the paper is organized as follows. In Section \ref{sec2}, we recall the necessary preliminaries on pseudodifferential operators. In Section \ref{sec3}, we reformulate the linearized system in sheared coordinates and introduce the basic weighted energy structure. Section \ref{sec4} establishes the symbolic and operator estimates needed for the energy method. In Section \ref{sec5}, we use these properties to prove the basic weighted energy estimate. Section \ref{sec6} is devoted to the construction and monotonicity of the higher- and lower-order weighted energies. Finally, in Section \ref{sec7}, we establish the boundedness of the evolution propagator and the required mixing estimate, and then combine these with the energy estimates and the Duhamel representation to derive the growth and inviscid damping estimates, thereby completing the proof of the main theorem.
	
	\subsection{Notations}
	\label{subsec notations}
	
	Throughout the paper, we use the following notations for Fourier transforms and Sobolev spaces. 
	\begin{itemize}
		\item We write \(\langle \xi \rangle := \sqrt{1 + \xi^2}\).
		\item We define the Fourier transforms in the \(x\) and \(y\) directions by
		\[\mathcal{F}_x f(k) = \frac{1}{\sqrt{2\pi}} \int_{\mathbb{T}} e^{-ikx} f(x) dx,\qquad \mathcal{F}_y f(\eta) = \frac{1}{\sqrt{2\pi}} \int_{\mathbb{R}} e^{-iy\eta} f(y) dy.\]
		\item We use Sobolev spaces \(H_x^s H_y^r\), equipped with the norm 
		\[\| f \|_{H_x^s H_y^r}^2 = \sum_{k \in \mathbb{Z}} \int_{\mathbb{R}} \langle k \rangle^{2s} \langle \eta \rangle^{2r} | \mathcal{F}_x \mathcal{F}_y f (k, \eta) |^2 d \eta.\]
		\item We also use the \(k\)-scaled Sobolev norm \[\| f \|_{\widetilde{H}^r} \coloneqq \left\| \left\langle \frac{\partial_y}{k} \right\rangle^r f \right\|_{L^2}.\]
	\end{itemize}
	
	Since the Fourier modes in the \(x\)-direction are decoupled in the linearized system, we fix a nonzero Fourier mode \(k \in \mathbb{Z} \setminus \{0\}\) throughout the mode-by-mode analysis preceding the proof of Theorem \ref{thm main}. Unless otherwise specified, the norms \(\| \cdot \|_{L^2}\) and \(\| \cdot \|_{H^r}\), as well as the inner product \(\langle \cdot, \cdot \rangle\), are taken with respect to the \(y\)-variable.
	
	\medskip
	
	We introduce the following notations for inequalities between two nonnegative quantities \(A,B\).
	\begin{itemize}
		\item We write \(A \lesssim B\) if \(A \le CB\) for some constant \(C > 0\). Unless otherwise specified, the implicit constant may depend on \(U'_{\inf}\), \(U'_{\sup}\), and \(M\), but is independent of \(k, t\) and \(\epsilon\).
		\item The notation \(A \approx B\) means that \(A \lesssim B\) and \(B \lesssim A\) both hold.
		\item The notation \(O(\epsilon)\) denotes a quantity whose absolute value is bounded by \(C \epsilon\), where \(C\) has the same allowable dependence as the implicit constants in \(\lesssim\).
		\item We write \(A \ll B\) if \(A \le \delta B\), where \(\delta > 0\) is a sufficiently small absolute constant independent of \(U'_{\inf}\), \(U'_{\sup}\), and \(M\).
	\end{itemize}

	\noindent{\bf Acknowledgments.} J. Zhang was supported in part by National Natural Science Foundation of China (No. 12601416). L. Zhao was supported by National Natural Science Foundation of China (No. 12341102 and No. 12271497).
	
	\section{Preliminaries}
	\label{sec2}
	
	In this section, we recall some basic notions and facts about pseudodifferential operators; see, e.g., \cite{Taylor1991, Wong2014}. 
	
	Let \(a(y,\eta)\) be a symbol, and denote by \(\op(a)\) its left quantization, defined by
	\[\op(a)f(y) = \frac{1}{\sqrt{2\pi}} \int_{\mathbb{R}} e^{iy\eta} a(y,\eta) \mathcal{F}_y f(\eta) d\eta.\]
	Conversely, given a pseudodifferential operator \(A\), we denote its symbol by \(\sigma(A)\).
	
	For two symbols \(a\) and \(b\), we define their symbolic composition by
	\[a \# b = \sigma(\op(a) \circ \op(b)),\]
	where \(\circ\) denotes the composition of operators. More explicitly,
	\begin{equation}
		\label{composition_formula}
		(a \# b) (y,\eta) = \frac{1}{2\pi} \int_{\mathbb{R}} \int_{\mathbb{R}} e^{-iz\lambda} a(y,\eta+\lambda) b(y+z,\eta) d \lambda dz.
	\end{equation}
	All integrals of this type are understood as oscillatory integrals. For smooth symbols satisfying the standard symbol estimates, the composition admits the well-known asymptotic expansion
	\begin{equation}
		\label{composition_expansion}
		a \# b \sim \sum_{m \ge 0} \frac{(-i)^m}{m!} \partial_\eta^m a \partial_y^m b.
	\end{equation}
	The symbols considered in this paper are only assumed to possess finitely many controlled derivatives and need not satisfy the hypotheses under which \eqref{composition_expansion} holds. Nevertheless, the formal expansion \eqref{composition_expansion} provides a convenient way to identify the principal symbol of a composition. Once these terms have been identified, all remainder estimates will be justified directly from the exact composition formula \eqref{composition_formula}, together with a finite-order Taylor expansion with integral remainder and integration by parts in the oscillatory variables.
	
	We define the symbolic commutator by
	\[[a,b] = a \# b - b \# a.\]
	For smooth symbols satisfying the standard symbol estimates, \eqref{composition_expansion} gives
	\begin{equation}
		\label{commutator_expansion}
		[a,b] \sim \sum_{m \ge 1} \frac{(-i)^m}{m!} (\partial_\eta^m a \partial_y^m b - \partial_\eta^m b \partial_y^m a).
	\end{equation}
	Hence, the principal symbol of the commutator is \(-i (\partial_\eta a \partial_y b - \partial_y a \partial_\eta b)\). As above, for the symbols used in this paper, this expression serves only to identify the leading term, while the corresponding remainder will always be estimated directly from \eqref{composition_formula}.
	
	The composition formula \eqref{composition_formula} also gives two special cases that will be used frequently. If \(a = a(y)\) depends only on \(y\) or \(b = b(\eta)\) depends only on \(\eta\), then
	\[a \# b = ab.\]
	Thus, composition from the left with a multiplication operator, or from the right with a Fourier multiplier, reduces to ordinary multiplication.
	
	We shall also use the symbol of the adjoint operator. If we define
	\[a^*(y,\eta) = \sigma (\op(a)^*),\]
	then
	\[a^*(y,\eta) = \frac{1}{2\pi} \int_{\mathbb{R}} \int_{\mathbb{R}} e^{-iz\lambda} \overline{a}(y + z, \eta + \lambda) d\lambda dz.\]
	In particular, if \(a = a(y)\) or \(a = a(\eta)\), then
	\[a^* = \overline{a}.\]
	Thus, real-valued multiplication symbols and Fourier multipliers define self-adjoint operators.
	
	We finally record the \(L^2\)-boundedness properties used below. If \(a = a(y)\), then \(\op(a)\) is multiplication by \(a\), and hence
	\[\| \op(a) \|_{L^2\to L^2} = \| a \|_{L^\infty}.\]
	If \(a = a(\eta)\), then \(\op(a)\) is a Fourier multiplier, and Plancherel's theorem gives
	\[\| \op(a) \|_{L^2\to L^2} = \| a \|_{L^\infty}.\]
	For general symbols, we use the following one-dimensional version of the Calder\'on--Vaillancourt theorem \cite{Hwang1987}.
	
	\begin{theorem}[Calder\'on--Vaillancourt]
		\label{thm CV}
		Let \(a:\mathbb R\times\mathbb R\to\mathbb C\) be a continuous symbol satisfying
		\[\| \partial_y^j \partial_\eta^l a \|_{L_y^\infty L_\eta^\infty} < \infty, \qquad j,l \in\{0,1\}.\]
		Then
		\[ \| \op(a) \|_{L^2\to L^2} \lesssim \max_{j, l \in \{0,1\}} \| \partial_y^j \partial_\eta^l a \|_{L_y^\infty L_\eta^\infty}.\]
	\end{theorem}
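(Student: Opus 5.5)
The plan follows Hwang's argument \cite{Hwang1987}. By density it suffices to bound \(|\langle \op(a)u, v\rangle|\) for \(u, v\) Schwartz. Write
\[
\langle \op(a)u,v\rangle=\frac{1}{\sqrt{2\pi}}\iint a(y,\eta)\,e^{iy\eta}\,\mathcal{F}_y u(\eta)\,\overline{v(y)}\,dy\,d\eta .
\]
The goal is to move one \(y\)-derivative and one \(\eta\)-derivative onto \(a\). Afterwards the remaining integrand should split as a product of two functions of \((y,\eta)\), each lying in \(L^2(\mathbb R^2)\) with norm controlled by \(\|u\|_{L^2}\) or \(\|v\|_{L^2}\). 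Cauchy--Schwarz then gives
\[
|\langle \op(a)u,v\rangle|\lesssim \max_{j,l\le 1}\|\partial_y^j\partial_\eta^l a\|_{L^\infty}\,\|u\|_{L^2}\|v\|_{L^2}.
\]

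The device for moving derivatives is the one-dimensional resolvent identity. Let \(K(t)=e^{t}\mathbbm{1}_{t<0}\), so that \(K\in L^1\cap L^2\) and \(h=(1-\partial)(K*h)\). Apply this in \(y\) to \(\overline{v}\) and in \(\eta\) to \(\mathcal F_y u\). To handle the coupling phase \(e^{iy\eta}\), I would use the splitting
\[
e^{iy\eta}=e^{iy'\eta'}e^{i(y-y')\eta'}e^{iy(\eta-\eta')}
\]
inside the convolutions. Concretely, I will define
\[
F(y,\eta)=\int K(\eta-\zeta)\,e^{iy(\zeta-\eta)}\,\mathcal F_y u(\zeta)\,d\zeta,
\qquad
G(y,\eta)=\int K(y-x)\,e^{-i(x-y)\eta}\,\overline{v(x)}\,dx .
\]
These are chosen so that \((1-\partial_\eta+\cdots)\) and \((1-\partial_y+\cdots)\) recover the original factors, up to first-order terms that carry the phase derivatives. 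The \(L^2_{y,\eta}\) bounds come from Plancherel in the free variable. For example, for fixed \(y\), the map \(\eta\mapsto G(y,\eta)\) is the Fourier transform of \(x\mapsto K(y-x)\overline{v(x)}\). Hence
\[
\|G\|_{L^2_{y,\eta}}^2=2\pi\iint|K(y-x)|^2|v(x)|^2\,dx\,dy=2\pi\|K\|_{L^2}^2\|v\|_{L^2}^2 ,
\]
and the bound for \(F\) is symmetric, using Plancherel in \(y\).

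After integrating by parts once in \(y\) and once in \(\eta\), the integrand becomes \(b(y,\eta)\,F\,\overline{G}\)-type terms. Here \(b\) is a linear combination of \(a\), \(\partial_y a\), \(\partial_\eta a\), and \(\partial_y\partial_\eta a\). This is exactly why only \(j,l\in\{0,1\}\) is needed. The bounds \(\|b\|_{L^\infty}\lesssim\max_{j,l\le1}\|\partial_y^j\partial_\eta^l a\|_{L^\infty}\) and \(|\iint bF\overline G|\le\|b\|_{L^\infty}\|F\|_{L^2}\|G\|_{L^2}\) then conclude the proof. Continuity of \(a\) together with boundedness of its mixed derivatives justifies the integrations by parts for Schwartz \(u, v\), with boundary terms vanishing by the decay of \(K\).

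The main obstacle is the bookkeeping of the phase \(e^{iy\eta}\). Differentiating in \(y\) produces a factor \(i\eta\), and differentiating in \(\eta\) produces \(iy\); these are unbounded and must not fall on \(a\). The definitions of \(F\) and \(G\) above, with the compensating phases \(e^{iy(\zeta-\eta)}\) and \(e^{-i(x-y)\eta}\), are designed so that these factors cancel exactly. If this cancellation does not close directly, my fallback is Hwang's original formulation. There one first proves the estimate for symbols compactly supported in a unit cell, using the above argument with a single derivative in each variable. The unit-cell pieces are then summed by almost-orthogonality, in the form of Cotlar--Stein with decay coming from the same resolvent integration by parts.
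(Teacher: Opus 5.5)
Your strategy is sound, and it is essentially Hwang's argument, which is all the paper relies on here: it cites that theorem and gives no proof of its own. One point in your plan is misdescribed, though. The phase derivatives do not cancel. Also, applying both resolvents at once does not factor, because the cross phase $e^{i(y'-y)(\eta'-\eta)}$ couples the two convolution variables. Do the two steps in sequence.

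First, $e^{iy\eta}\mathcal{F}_y u(\eta)=(1-\partial_\eta)\bigl(e^{iy\eta}F\bigr)$, and $\overline{v(y)}$ does not depend on $\eta$. So the integration by parts in $\eta$ puts the derivative only on $a$.

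Second, write $e^{iy\eta}\overline{v(y)}=(1-\partial_y)\bigl(e^{iy\eta}\widetilde G\bigr)$ with $\widetilde G(y,\eta)=\int K(y-x)e^{+i(x-y)\eta}\,\overline{v(x)}\,dx$. Note the sign of the phase, and that $\widetilde G$ enters unconjugated; your $F\overline G$ would pair $u$ with $v$ rather than $\overline v$. In this integration by parts, $\partial_y$ does fall on $F$ through its relative phase and produces the factor $i(\zeta-\eta)K(\eta-\zeta)$. This is harmless only because $tK(t)=te^{t}\mathbbm{1}_{t<0}$ is still in $L^2$. Hence $\partial_yF$ is again of $F$-type and obeys the same Plancherel bound.

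The identity that actually closes the argument is
\[
\langle \op(a)u,v\rangle=\frac{1}{\sqrt{2\pi}}\iint\Bigl(\bigl[(1+\partial_y)(1+\partial_\eta)a\bigr]F+\bigl[(1+\partial_\eta)a\bigr]\partial_yF\Bigr)e^{iy\eta}\,\widetilde G\,dy\,d\eta .
\]
Cauchy--Schwarz then gives the claim. For Schwartz $u,v$ all integrands are absolutely integrable, since $|F|\le |K|*|\mathcal{F}_y u|$ and $|e^{iy\eta}\widetilde G|\le |K|*|v|$, so the integrations by parts are justified as you say.

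Your Cotlar--Stein fallback would not work under these hypotheses. One integration by parts per variable gives at best $\|T_j^*T_k\|\lesssim\langle j-k\rangle^{-1}$ along a phase-space lattice direction. Since $\sum\langle j-k\rangle^{-1/2}$ diverges, that route needs more derivatives. Fortunately the main route closes, so the fallback is not needed.
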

	
	\section{Reformulation and basic energy structure}
	\label{sec3}
	
	In this section, we carry out the transformation of the linearized system described in the introduction. After taking the Fourier transform in the \(x\)-variable and passing to the sheared coordinates, we derive the evolution system for \(R\), \(A\), and \(\Theta\). We then give the precise definitions of the operators entering the basic energy estimate and state the estimate outlined in the introduction. Its proof relies on the operator bounds established in the next section.
	
	We first perform the Fourier transform in the \(x\)-direction. Since the coefficients of the linearized system are independent of \(x\), the Fourier transform reduces the system to a family of decoupled equations for each Fourier mode. The zero \(x\)-average assumption eliminates the zero Fourier mode. We therefore fix \(k \in \mathbb{Z} \setminus \{0\}\) until the proof of Theorem \ref{thm main}, where the estimates for the individual modes are summed over \(k\).
	
	Denoting the Fourier modes of \(\rho, \alpha, \omega\) by \(\rho_k, \alpha_k, \omega_k\), respectively, we obtain
	\begin{equation}
		\label{k_mode}
		\begin{cases}
			\partial_t \rho_k + ikU \rho_k + \alpha_k = 0,\\
			\partial_t \alpha_k + ikU \alpha_k + 2 ikU' (\partial_y \Delta_k^{-1} \alpha_k + ik \Delta_k^{-1} \omega_k) + \frac{1}{M^2} \Delta_k \rho_k = 0,\\
			\partial_t \omega_k + ikU \omega_k - U' \alpha_k = U'' (\partial_y \Delta_k^{-1} \alpha_k + ik \Delta_k^{-1} \omega_k).
		\end{cases}
	\end{equation}	
	where \(\Delta_k = - k^2 + \partial_y^2\), whose inverse is well-defined for \(k \in \mathbb{Z} \setminus \{ 0 \}\).
	
	To remove the transport part induced by the background shear flow, we introduce the sheared coordinates as in \cite{Zillinger2017}. Equivalently, we define the unitary operator \(V=e^{ikU(y)t}\), and introduce the transformed variables
	\begin{equation}
		\label{coordinate_transformation}
		R = V \rho_k, \qquad A = V \alpha_k,\qquad W=V \omega_k.
	\end{equation}
	For simplicity, we omit the dependence of these variables on the Fourier mode \(k\). Then the system \eqref{k_mode} is transformed into
	\begin{equation}
		\label{R_A_W}
		\begin{cases}
			\partial_t R + A = 0,\\
			\partial_t A + 2ikU'(D_t \Delta_t^{-1} A + ik \Delta_t^{-1} W) + \frac{1}{M^2} \Delta_t R = 0,\\
			\partial_t W - U' A = U''(D_t \Delta_t^{-1} A + ik \Delta_t^{-1} W),
		\end{cases}
	\end{equation}
	where
	\[D_t = V \partial_y V^{-1} = \partial_y - ikU't, \qquad \Delta_t = V \Delta_k V^{-1} = - k^2 + D_t^2.\]
	
	It is convenient to introduce the new variable
	\[\Theta = W + U'R + U'' D_t \Delta_t^{-1} R \eqqcolon W + LR.\]
	Instead of working directly with the evolution equation for \(W\), we use the modified variable \(\Theta\). The two correction terms \(U'R\) and \(U'' D_t \Delta_t^{-1} R\) are chosen together so that all terms involving \(A\) cancel from the evolution equation for \(\Theta\). The resulting equation is simpler and better suited to the subsequent energy estimates. Indeed, we calculate that
	\[\partial_t \Theta = \partial_t W + U' \partial_t R + U'' \partial_t (D_t \Delta_t^{-1}) R + U'' D_t \Delta_t^{-1} \partial_t R = ikU'' \Delta_t^{-1} W + U'' \partial_t (D_t \Delta_t^{-1}) R.\] 
	Expressing \(W\) in terms of \(\Theta\), we obtain
	\[\partial_t \Theta = ikU'' \Delta_t^{-1} \Theta + [U'' \partial_t (D_t \Delta_t^{-1}) - ik U'' \Delta_t^{-1} U' - ik U'' \Delta_t^{-1} U'' D_t \Delta_t^{-1}] R.\]
	It remains to simplify the operator. For simplicity, we denote it by \(K\). More precisely,
	\[K \coloneqq U'' \partial_t (D_t \Delta_t^{-1}) - ik U'' \Delta_t^{-1} U' - ik U'' \Delta_t^{-1} U'' D_t \Delta_t^{-1}.\]
	Since \(\partial_t D_t = -ikU'\), it follows that
	\[\partial_t \Delta_t = (\partial_t D_t) D_t + D_t (\partial_t D_t) = -ik (2U'D_t + U''),\]
	and therefore
	\[\partial_t \Delta_t^{-1} = - \Delta_t^{-1} (\partial_t \Delta_t) \Delta_t^{-1} = ik \Delta_t^{-1} (2U' D_t + U'') \Delta_t^{-1}.\] 
	Thus
	\begin{align*}
		K &= -ikU' U'' \Delta_t^{-1} + ik U'' D_t \Delta_t^{-1} (2U' D_t + U'') \Delta_t^{-1} -ik U'' \Delta_t^{-1} U' -ik U'' \Delta_t^{-1} U'' D_t \Delta_t^{-1} \\
		&= ik U'' \Delta_t^{-1} [- \Delta_t U' + D_t (2U' D_t + U'') - U' \Delta_t - U'' D_t] \Delta_t^{-1}.
	\end{align*}
	To simplify the operator in brackets, note that \([D_t, U'] = U''\) and \([D_t, U''] = U'''\), which imply
	\[D_t (2U' D_t + U'') = U' D_t^2 + D_t^2 U' + U'' D_t.\]
	Therefore, since \(\Delta_t = -k^2 + D_t^2\), we have
	\[-U' \Delta_t -U'' D_t - \Delta_t U' + D_t (2U' D_t + U'') = 2k^2 U',\]
	and hence
	\[K = 2ik^3U'' \Delta_t^{-1} U' \Delta_t^{-1}.\]
	
	Consequently, the system \eqref{R_A_W} can be equivalently rewritten as
	\begin{equation}
		\label{R_A_Theta}
		\begin{cases}
			\partial_t R + A = 0, \\
			\partial_t A + 2ikU'(D_t \Delta_t^{-1} A + ik \Delta_t^{-1} (\Theta - U'R - U'' D_t \Delta_t^{-1} R)) + \frac{1}{M^2} \Delta_t R = 0,\\
			\partial_t \Theta = ikU'' \Delta_t^{-1} \Theta + 2ik^3U'' \Delta_t^{-1} U' \Delta_t^{-1} R.
		\end{cases}
	\end{equation}
	The reformulated system provides a suitable framework for the energy method developed below. Within this framework, we define the following operators to state the basic energy structure.
	\[T = V \op \left( \exp \left(\frac{1}{\gamma} \sinh^{-1} \frac{\eta}{k} \right) \right) V^{-1}, \qquad S = V \op \left( \exp \left( \frac{1}{\beta} \tan^{-1} \frac{\eta}{k} \right) \right) V^{-1},\]
	\[P = V \op \left( \left( \frac{U'(y) |k|}{\gamma \sqrt{k^2 + \eta^2}} \right)^{1/2} \right) V^{-1}, \qquad Q = V \op \left( \left( \frac{U'(y) k^2}{\beta (k^2 + \eta^2)} \right)^{1/2} \right) V^{-1},\]
	where \(\gamma = \frac{2}{3}\) and \(\beta = \beta(U'_{\sup}, M)\) is chosen so that \(0 < \beta \ll \min \{1, (MU'_{\sup})^{-1} \}\). The sufficiently small constant implicit in \(\ll\) is absolute and, in particular, independent of \(U'_{\inf}\), \(U'_{\sup}\), and \(M\).
	
	With the transformed system and the associated weighted operators defined above, we now state the basic energy estimate.
	
	\begin{proposition}
		\label{prop basic_energy}
		Define the energy functional
		\begin{equation}
			\label{energy}
			\begin{aligned}
				E =& \frac{1}{2M^2} \| TSkR \|_{L^2}^2 + \frac{1}{2M^2} \| TSD_tR \|_{L^2}^2 + \frac{1}{2} \| TSA \|_{L^2}^2 + \frac{1}{2} \| TS \Theta \|_{L^2}^2 \\
				&+ \frac{1}{2} \| TS k (- \Delta_t)^{-1/2} (\Theta - U'R) \|_{L^2}^2 + \frac{1}{3} \re \langle PTSA, PTSR \rangle.
			\end{aligned}
		\end{equation}
		Assume that \(U\) satisfies the shear-profile assumptions of Theorem \ref{thm main}, with \(\epsilon = \epsilon(U'_{\inf}, U'_{\sup}, M)\) sufficiently small. For notational convenience, we set
		\[\Xi = k (-\Delta_t)^{-1/2} (\Theta-U'R).\]
		Then
		\begin{equation}
			\label{equivalent_energy}
			E \approx \frac{1}{M^2} \| TS \sqrt{-\Delta_t} R \|_{L^2}^2 + \| TSA \|_{L^2}^2 + \| TS \Theta \|_{L^2}^2 + \| TS \Xi \|_{L^2}^2.
		\end{equation}
		Moreover, if \((R,A,\Theta)\) is a solution of \eqref{R_A_Theta}, then there exists an absolute constant \(c_0 > 0\), independent of \(U'_{\inf}\), \(U'_{\sup}\), and \(M\), such that
		\begin{equation}
			\label{derivative}
			\begin{aligned}
				\frac{d}{dt}E \le& -c_0 \bigg( \frac{1}{M^2} \| PTSkR \|_{L^2}^2 + \| PTS \Theta \|_{L^2}^2 + \| PTS \Xi \|_{L^2}^2 + \frac{1}{M^2} \| QTSkR \|_{L^2}^2 \\
				&+ \frac{1}{M^2} \| QTSD_tR \|_{L^2}^2 + \| QTSA \|_{L^2}^2 + \| QTS \Theta \|_{L^2}^2 + \| QTS \Xi \|_{L^2}^2 \bigg) \\
				\le&~ 0.
			\end{aligned}
		\end{equation}
		In particular,
		\[E(t)\le E(0).\]
	\end{proposition}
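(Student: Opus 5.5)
The proof has two parts, coercivity \eqref{equivalent_energy} and the dissipation inequality \eqref{derivative}. Throughout we use that \(T,S,P,Q\) are \(V\)-conjugates of Fourier multipliers, with \(P,Q\) carrying the slowly varying factor \(U'(y)\). Hence \(T,S\) commute exactly with \(D_t\) and \(\Delta_t\). Every commutator of \(T,S,P,Q\) with \(U'\) or \(U''\) is computed in the conjugated frame: there \(V^{-1}U'V=U'\), so it is a commutator of \(\op(m)\) with a function whose derivatives are \(O(\epsilon)\). By the composition formula \eqref{composition_formula} with finite Taylor expansion, and by Theorem \ref{thm CV}, such commutators gain one power of \((k^2+\eta^2)^{-1/2}|k|\)-type decay relative to the product of the orders. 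They are \(O(\epsilon)\) and at least one order lower. In particular they are bounded by \(\epsilon\) times \(Q\)-scale quantities, since \(Q^2\sim U'k^2(-\Delta_t)^{-1}/\beta\). I would first record these operator bounds as lemmas, in the spirit of Section \ref{sec4}.

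For the coercivity, \(\|TSkR\|^2+\|TSD_tR\|^2=\|TS\sqrt{-\Delta_t}R\|^2\), because \(TS\) commutes with \(D_t\). The only indefinite term is the cross term. We write \(P^2=\gamma^{-1}|k|(-\Delta_t)^{-1/2}U'+\) lower order, and use \(P^*=P+O(\epsilon)\). This gives
\[\tfrac13\re\langle PTSA,PTSR\rangle=\tfrac12\re\langle TSA,TS|k|(-\Delta_t)^{-1/2}U'R\rangle+O(\epsilon)(\ldots),\]
where the commutators of \(U'\) with \(TS\) are again \(O(\epsilon)\). We then substitute \(|k|(-\Delta_t)^{-1/2}U'R=\sgn(k)[k(-\Delta_t)^{-1/2}\Theta-\Xi]\), using \(\|k(-\Delta_t)^{-1/2}\|\le1\). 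The Young inequality shown in the introduction then controls the cross term by \(\frac13\|TSA\|^2+\frac38\|TS\Theta\|^2+\frac38\|TS\Xi\|^2\). This yields the lower bound; the upper bound follows from the same estimates.

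For \eqref{derivative}, we differentiate each term using \eqref{R_A_Theta}, together with \(\partial_t D_t=-ikU'\), the time derivatives of \(T\) and \(S\), and the time derivative of \(P\). We have \(\partial_tT=V\,\partial_t\!\op(\cdot)V^{-1}\)-type identities: \(\partial_t(V\op(m)V^{-1})=[ikU't,\cdot]\)-terms, and these reduce, at principal symbol level, to \(-U'(y)\partial_\eta m\cdot k\) in the conjugated frame. This gives \((\partial_tT)T^{-1}\approx-P^*P\) and \((\partial_tS)S^{-1}\approx-Q^*Q\), each with \(O(\epsilon)\) remainders. The principal wave terms cancel exactly, since \(\re\langle TSA,TS\Delta_tR\rangle/M^2\) cancels against the \(D_tR\) derivative. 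The remaining \(P\)-scale terms produce the quadratic form with coefficients \(-1+\lambda+\gamma\) and \(-1-\lambda+2\gamma\), which equals \(0\) for \(\gamma=\frac23,\lambda=\frac13\) on the \(D_tR\) and \(A\) components. The components \(kR\), \(\Theta\) and \(\Xi\) keep strictly negative \(P\)-scale coefficients; the \(\Theta,\Xi\) contributions come from differentiating \(T\) on their terms. In \(\frac{d}{dt}\|TS\Xi\|^2\), the \(\partial_t\Theta\) part is \(O(\epsilon)\). The \(\partial_tR=-A\) part gives \(\re\langle TS\Xi,TSk(-\Delta_t)^{-1/2}U'A\rangle\), and this must pair with the \(\Theta\)-containing term \(2ikU'\cdot ik\Delta_t^{-1}\Theta\) of the \(A\)-equation. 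The \(\Theta\) term in the \(A\) equation combined with the \(\Xi\) derivative should assemble, up to \(O(\epsilon)\), into an exact cancellation or a \(Q\)-scale term. I would check this algebra carefully first.

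The main obstacle is the remaining \(Q\)-scale terms, which are not sign-definite. They come from \(\partial_t\) falling on \(P\), from the lower-order part \(2ikU'D_t\Delta_t^{-1}A\) and the \(\Theta\) coupling in the cross term, and from the \(Q\)-order remainder of \(P^2\) symbols. These are bounded by \(C(1+MU'_{\sup})\) times the sum of the \(Q\)-norms in \eqref{derivative}, but without the factor \(1/\beta\) that the \(S\)-dissipation carries. Since \(Q\) contains \(\beta^{-1/2}\), choosing \(\beta\ll\min\{1,(MU'_{\sup})^{-1}\}\) lets the \(S\)-dissipation \(-\|QTS\cdot\|^2\) (with coefficient \(1\), versus \(O(\beta)\) for the errors) absorb them. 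Finally, all \(O(\epsilon)\) commutator errors are dominated by \(\epsilon C(U'_{\inf},U'_{\sup},M)\) times the \(Q\)-norms, using \(U'\ge U'_{\inf}\) to compare symbols, and are absorbed once \(\epsilon\) is small. This leaves \(-c_0\) times the listed dissipation. Integrating in time gives \(E(t)\le E(0)\).
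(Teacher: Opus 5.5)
\textbf{There is a genuine gap in your ``main obstacle'' paragraph.} You claim that every remaining $Q$-scale error is $O(\beta(1+MU'_{\sup}))$ relative to $\|QTS\cdot\|^2$, so that choosing $\beta$ small closes the estimate. Your list of sources leaves out the terms where the time derivative falls on the weight $TS$ inside the cross term, namely $\frac13\re\langle P\,\partial_t(TS)A,PTSR\rangle+\frac13\re\langle PTSA,P\,\partial_t(TS)R\rangle$.

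By Lemma \ref{lem P_Q_TS}, $\partial_t(TS)\approx-(P^*P+Q^*Q)TS$. The $P^*P$ part is indeed small, of relative size $O(\beta MU'_{\sup})$. The $Q^*Q$ part is not. For example, $-\frac13\re\langle QTSA,\,QP^*P\,TSR\rangle$ carries the same factor $\beta^{-1}$ as the $S$-dissipation. Its size relative to $\|QTS\cdot\|^2$ is therefore independent of $\beta$: by \eqref{P_Q_bound2} it is about $\frac{1}{3\gamma}\|QTSA\|\,\|Q\,k(-\Delta_t)^{-1/2}U'TSR\|$. Shrinking $\beta$ does nothing here.

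If you bound the $R$-factor by $U'_{\sup}\|QTSkR\|$, you get $\frac{MU'_{\sup}}{3\gamma}\|QTSA\|\,\frac1M\|QTSkR\|$ for each of the two terms. This has no factor $\beta$, and absorbing it forces a subsonic-type restriction on $MU'_{\sup}$, which is exactly what the proposition avoids.

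\textbf{What closes the estimate (and what the paper does).} Use the $\Xi$-component a second time, not only for coercivity. Rewrite $k(-\Delta_t)^{-1/2}U'R=k(-\Delta_t)^{-1/2}\Theta-\Xi$ inside these terms. With the sharp constants $\frac1\gamma+O(\epsilon)$ from \eqref{P_Q_bound2}--\eqref{P_Q_bound3}, each term is bounded by $\frac13(\frac1\gamma+O(\epsilon))\|QTSA\|(\|QTS\Theta\|+\|QTS\Xi\|)$, plus $\beta$-small and $O(\epsilon)$ pieces. Young's inequality then gives $M$-independent but $O(1)$ coefficients:
\begin{itemize}
\item $\frac{4}{9\gamma}=\frac23$ on $\|QTSA\|^2$;
\item $\frac{1}{2\gamma}=\frac34$ on each of $\|QTS\Theta\|^2$ and $\|QTS\Xi\|^2$.
\end{itemize}
These are absorbed only because, at $\gamma=\frac23$, they are strictly below the unit coefficient of the $S$-dissipation. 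This second, $Q$-level numerical balance is missing from your argument, and without it no choice of $\beta$ yields \eqref{derivative}.

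Your other concern is fine as stated. The $\Theta$-coupling in the $A$-equation and the $\Xi$-equation terms need no cancellation: they are of order $k^2(-\Delta_t)^{-1}$, hence genuinely $O(\beta(1+MU'_{\sup}))$ relative to the $Q$-norms.
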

	
	The proof of Proposition~\ref{prop basic_energy} is postponed to Section \ref{sec5}, after the necessary operator estimates are established in the next section.
	
	\section{Properties of the energy operators}
	\label{sec4}
	
	In order to prove the basic energy estimate, we first collect some properties of the operators introduced in the previous section. These properties will be repeatedly used in the energy estimates below.
	
	\subsection{The basic dissipative structure}
	
	The following lemma summarizes the symbolic properties of the operators introduced above. In particular, we compute the principal symbols of \(P^*P\), \(Q^*Q\), and \([\partial_t(TS)](TS)^{-1}\).
	
	\begin{lemma}
		\label{lem P_Q_TS}
		For every \(f\) such that \(TSf \in L^2\), we have
		\[\left| \| PTSf \|_{L^2}^2 - \re \left\langle V \op \left( \frac{1}{\gamma} U'(y) \left\langle \frac{\eta}{k} \right\rangle^{-1} \right) V^{-1} TSf, TSf \right\rangle \right| \lesssim \epsilon \| QTSf \|_{L^2}^2,\]
		and
		\[\left| \| QTSf \|_{L^2}^2 - \re \left\langle V \op \left( \frac{1}{\beta} U'(y) \left\langle \frac{\eta}{k} \right\rangle^{-2} \right) V^{-1} TSf, TSf \right\rangle \right| \lesssim \epsilon \| QTSf \|_{L^2}^2.\]
		Moreover,
		\[\left| \re \langle \partial_t (TS) f, TS f \rangle + \| PTSf \|_{L^2}^2 + \| QTSf \|_{L^2}^2 \right| \lesssim \epsilon \| QTSf \|_{L^2}^2.\]
	\end{lemma}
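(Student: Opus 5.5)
Since $V$ is unitary and every operator in the statement is a $V$-conjugate of a pseudodifferential operator, I will first strip off $V$. Set $g = V^{-1}TSf$, and let $\tau(\eta)=\exp(\frac1\gamma\sinh^{-1}\frac{\eta}{k})$, $s(\eta)=\exp(\frac1\beta\tan^{-1}\frac{\eta}{k})$, $p=(U'(y)|k|/(\gamma\langle\eta\rangle_k))^{1/2}$, $q=(U'(y)k^2/(\beta\langle\eta\rangle_k^2))^{1/2}$, where $\langle\eta\rangle_k=\sqrt{k^2+\eta^2}$. Then $\|PTSf\|^2=\langle \op(p)^*\op(p)g,g\rangle$, and similarly for $Q$. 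It therefore suffices to compare $\op(p)^*\op(p)$ with $\op(p^2)$ in the quadratic form, with an error bounded by $\epsilon\|\op(q)g\|^2$; the second claim is the same argument with $p$ replaced by $q$.

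Write $\op(p)^*\op(p)-\op(p^2)=\op(p^*\#p-p^2)$. Using the exact composition formula \eqref{composition_formula} and the adjoint formula, I will expand to first order with integral remainder. Since $p$ is real, the first-order term $-i\partial_\eta p\,\partial_y p$ coming from $p^*\#p$ is cancelled in the real part by its counterpart. What remains is second order and carries the factors $\partial_y^2 p$, $(\partial_y p)^2$ and $\partial_\eta^2 p$. Each $y$-derivative contributes a factor $U''$, $U'''$ of size $\epsilon$, and each $\eta$-derivative gains $\langle\eta\rangle_k^{-1}$. So the remainder symbol $r$ satisfies $|\partial_y^j\partial_\eta^l r|\lesssim\epsilon|k|\langle\eta\rangle_k^{-3-l}\le\epsilon\, k^2\langle\eta\rangle_k^{-2}\cdot\langle\eta\rangle_k^{-1-l}$, using $|k|\le\langle\eta\rangle_k$. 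To convert this symbol bound into the quadratic-form estimate, I factor $r=q\#\tilde r\#q$ up to further remainders, using that $q$ is elliptic of order $-1$ because $U'\ge U'_{\inf}$ and that $U'$ is smooth. The bounded middle factor $\tilde r$ is then controlled by the Calderón--Vaillancourt Theorem \ref{thm CV}, whose derivative hypotheses are supplied by $\|U''\|_{W^{n,\infty}}\le\epsilon$. This gives $|\langle\op(r)g,g\rangle|\lesssim\epsilon\|\op(q)g\|^2$. The scaling in $k$ is harmless: after rescaling $\eta\mapsto\eta/k$ the symbols are uniform, and the $y$-derivatives only improve because $k\ne0$ is an integer.

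For the third claim I compute $\partial_t(TS)$ directly. Since $V=e^{ikUt}$, we have $\partial_t(V\op(m)V^{-1})=[ikU,V\op(m)V^{-1}]=V[ikU,\op(m)]V^{-1}$. The symbolic commutator $[ikU,\op(m)]$ equals $\op(-ikU'\,\partial_\eta m)$ exactly up to second-order terms involving $U''$, again read off from \eqref{composition_formula}. For $m=\tau s$ one has $\partial_\eta m = m\bigl(\frac{1}{\gamma\langle\eta\rangle_k}+\frac{k}{\beta\langle\eta\rangle_k^2}\bigr)$, where the sign of $k$ has to be tracked through the quotient $\eta/k$ (this is why $|k|$ appears in the first piece). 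Thus the principal symbol of $\partial_t(TS)$ is $-(p^2+q^2)\,\tau s$, but with $U'(y)$ sitting on the left of the multiplier. Reordering it to act after $TS$ costs a commutator $[U',\op(\cdot)]$, which is of size $\epsilon$ times one order lower, that is, of order $k^2\langle\eta\rangle_k^{-2}$. It is therefore again bounded by $\epsilon\|QTSf\|^2$ via the factorization argument above. Combining this with the first two claims gives the third.

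The main obstacle is the remainder analysis. The symbols are only finitely smooth in $y$, so the remainders must be controlled through the exact oscillatory integral formula, with integration by parts in $(z,\lambda)$, rather than through the asymptotic expansion \eqref{composition_expansion}. The delicate part is proving that these remainders are not merely bounded but of the weighted form $\epsilon\|QTSf\|^2$, which requires the factorization through $q$ together with the ellipticity bound $U'\ge U'_{\inf}$.
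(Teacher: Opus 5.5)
Your treatment of the first two estimates follows the paper's route. After removing $V$, the paper writes $\sigma(V^{-1}P^*PV)=\frac1\gamma U'\langle\frac{\eta}{k}\rangle^{-1}+r_P$, normalizes $r_P$ by $Q$ on both sides, and applies Theorem \ref{thm CV}; your argument does the same and is fine. Your real-part cancellation of the first-order term is correct but not needed: the paper keeps $r_P=\frac1\gamma[\langle\frac{\eta}{k}\rangle^{-1/2},U']\langle\frac{\eta}{k}\rangle^{-1/2}$, which is already $O(\epsilon\langle\frac{\eta}{k}\rangle^{-2})$, that is, $\epsilon$ times the $Q^*Q$ scale.

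\textbf{The gap is in the third estimate.} Write $m=\tau s$ in your notation, the symbol of $V^{-1}TSV$. The remainder of $ik[U,\op(m)]$ acts on $f$, but the bound you need is in terms of $TSf$. So you must control the symbol of $V^{-1}[\partial_t(TS)](TS)^{-1}V$, whose remainder is
\[
-\frac{i}{2\pi}\int_0^1\int_{\mathbb{R}}\int_{\mathbb{R}} e^{-iz\lambda}\lambda^2(1-\theta)\,\frac{k\,\partial_\eta^2 m(\eta+\theta\lambda)}{m(\eta)}\,U(y+z)\,d\lambda\,dz\,d\theta .
\]
Here $m$ cannot be treated as a symbol of fixed order. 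We have $m(\eta)\approx\langle\frac{\eta}{k}\rangle^{3/2}$ for $\frac{\eta}{k}\ge0$ but $m(\eta)\approx\langle\frac{\eta}{k}\rangle^{-3/2}$ for $\frac{\eta}{k}\le0$. An order-$\frac32$ bound on $\partial_\eta^2 m$ therefore misses the required $\epsilon\langle\frac{\eta}{k}\rangle^{-2}$ by a factor $\langle\frac{\eta}{k}\rangle^{3}$ as $\frac{\eta}{k}\to-\infty$.

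What is needed is the following:
\begin{itemize}
\item the relative bound $|\partial_\eta^2 m|\lesssim k^{-2}\langle\frac{\eta}{k}\rangle^{-2}m$;
\item the moderate-growth estimate $m(\eta+\zeta)/m(\eta)\lesssim\langle\frac{\zeta}{k}\rangle^{2/\gamma}=\langle\frac{\zeta}{k}\rangle^{3}$, which is Claim \ref{cla sinh} (the $\tan^{-1}$ factor only contributes a bounded ratio);
\item two integrations by parts in $z$, which turn $\lambda^2U$ into $U''$ and produce the factor $\epsilon$;
\item further integrations by parts, which absorb the polynomial growth in $\lambda$ at the cost of more derivatives of $U''$.
\end{itemize}
Your proposal never identifies this step. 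The ``delicate part'' you name, the $Q$-factorization, only becomes applicable once this quotient is controlled.

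Two smaller slips occur in the same step. First, the principal symbol of $[ikU,\op(m)]$ is $-kU'\partial_\eta m$, not $-ikU'\partial_\eta m$. With $\partial_\eta m=\frac{m}{k}\bigl(\frac1\gamma\langle\frac{\eta}{k}\rangle^{-1}+\frac1\beta\langle\frac{\eta}{k}\rangle^{-2}\bigr)$, this gives your (correct) $-(p^2+q^2)m$ without any separate sign tracking. Second, no reordering commutator arises. In the left quantization, $\op(U'(y)\phi(\eta)m(\eta))=U'\op(\phi)\op(m)$. Hence the principal part of $V^{-1}[\partial_t(TS)](TS)^{-1}V$ is exactly $-U'\op(\phi)$, with $\phi=\frac1\gamma\langle\frac{\eta}{k}\rangle^{-1}+\frac1\beta\langle\frac{\eta}{k}\rangle^{-2}$, which is precisely the operator in the first two estimates.
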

	
	\begin{proof}
		We first compute the principal symbol of \(P^*P\). We work in the sheared coordinates and conjugate by the unitary operator \(V\). Recall that
		\[\sigma(V^{-1} P V) = \left( \frac{U'(y) |k|}{\gamma \sqrt{k^2 + \eta^2}} \right)^{1/2} = \gamma^{-1/2} U'(y)^{1/2} \# \left\langle \frac{\eta}{k} \right\rangle^{-1/2}.\]
		Therefore,
		\[\sigma(V^{-1} P^*P V) = \frac{1}{\gamma} \left\langle \frac{\eta}{k} \right\rangle^{-1/2} \# U'(y) \# \left\langle \frac{\eta}{k} \right\rangle^{-1/2}.\]
		We decompose this symbol into its principal part and a remainder term. More precisely,
		\begin{equation}
			\label{P^*P}
			\sigma(V^{-1} P^*P V) = \frac{1}{\gamma} U'(y) \left\langle \frac{\eta}{k} \right\rangle^{-1} + r_P (y, \eta),
		\end{equation}
		where \(r_P\) is given by
		\[r_P (y, \eta) = \frac{1}{\gamma} \left[ \left\langle \frac{\eta}{k} \right\rangle^{-1/2}, U'(y) \right] \left\langle \frac{\eta}{k} \right\rangle^{-1/2}.\]
		
		It remains to show that the remainder term is small. By the composition formula \eqref{composition_formula},
		\[\left\langle \frac{\eta}{k} \right\rangle^{-1/2} \# U'(y) = \frac{1}{2\pi} \int_{\mathbb{R}}\int_{\mathbb{R}} e^{-iz\lambda} \left\langle \frac{\eta + \lambda}{k} \right\rangle^{-1/2} U'(y+z) d \lambda dz.\]
		Applying Taylor's theorem with the integral remainder,
		\[\left\langle \frac{\eta + \lambda}{k} \right\rangle^{-1/2} = \left\langle \frac{\eta}{k} \right\rangle^{-1/2} - \lambda \int_{0}^{1} \frac{\eta + \theta \lambda}{2k^2} \left\langle \frac{\eta + \theta \lambda}{k} \right\rangle^{-5/2} d \theta.\]
		Hence,
		\[\left[ \left\langle \frac{\eta}{k} \right\rangle^{-1/2}, U'(y) \right] = - \frac{1}{2\pi} \int_{\mathbb{R}}\int_{\mathbb{R}} \int_{0}^{1} e^{-iz\lambda} \lambda \frac{\eta + \theta \lambda}{2k^2} \left\langle \frac{\eta + \theta \lambda}{k} \right\rangle^{-5/2} U'(y+z) d \theta d \lambda dz.\]
		Integrating by parts in \(z\) and \(\lambda\),
		\[\left[ \left\langle \frac{\eta}{k} \right\rangle^{-1/2}, U'(y) \right] = \frac{i}{2\pi} \int_{0}^{1} \int_{\mathbb{R}}\int_{\mathbb{R}} e^{-iz\lambda} \langle z \rangle^{-2} (1 - \partial_\lambda^2) \langle \lambda \rangle^{-4} (1 - \partial_z^2)^2 \frac{\eta + \theta \lambda}{2k^2} \left\langle \frac{\eta + \theta \lambda}{k} \right\rangle^{-5/2} U''(y+z) d \lambda dz d \theta.\]
		Using the smallness assumption on \(U''\), we obtain
		\[\left| \left[ \left\langle \frac{\eta}{k} \right\rangle^{-1/2}, U'(y) \right] \right| \lesssim \epsilon \int_{0}^{1} \int_{\mathbb{R}}\int_{\mathbb{R}} \langle z \rangle^{-2} \langle \lambda \rangle^{-4} \left\langle \frac{\eta}{k} \right\rangle^{-3/2} \langle \lambda \rangle^{3/2} d \lambda dz d \theta \lesssim \epsilon \left\langle \frac{\eta}{k} \right\rangle^{-3/2}.\]
		Consequently, for \(j \in \{0, 1, \cdots, 5\}, l \in \{0, 1\}\),
		\begin{equation}
			\label{r_P}
			|\partial_y^j \partial_\eta^l r_P (y, \eta)| \lesssim \epsilon \left\langle \frac{\eta}{k} \right\rangle^{-2 - l}.
		\end{equation}
		
		To show that the remainder is small compared with the principal part, we consider the normalized operator \((Q^{-1})^* \mathcal{R}_P Q^{-1}\), where \(\mathcal{R}_P = V \op (r_P) V^{-1}\). The symbol of this operator is given by
		\[\sigma (V^{-1} (Q^{-1})^* \mathcal{R}_P Q^{-1} V) = \beta U'(y)^{-1/2} \# \left\langle \frac{\eta}{k} \right\rangle \# r_P(y, \eta) \# \left\langle \frac{\eta}{k} \right\rangle \# U'(y)^{-1/2}.\]
		Since \(U'\) is bounded from above and below, it is sufficient to estimate the symbol \(\left\langle \frac{\eta}{k} \right\rangle \# r_P \# \left\langle \frac{\eta}{k} \right\rangle\). Using the composition formula \eqref{composition_formula}, we have
		\[\left\langle \frac{\eta}{k} \right\rangle \# r_P (y, \eta) \# \left\langle \frac{\eta}{k} \right\rangle = \frac{1}{2\pi} \int_{\mathbb R} \int_{\mathbb R} e^{-iz\lambda} \left\langle \frac{\eta + \lambda}{k} \right\rangle r_P(y+z,\eta) \left\langle \frac{\eta}{k} \right\rangle d \lambda dz.\]
		Integrating by parts in \(z\) and \(\lambda\), we obtain
		\[\left\langle \frac{\eta}{k} \right\rangle \# r_P (y, \eta) \# \left\langle \frac{\eta}{k} \right\rangle = \frac{1}{2 \pi} \int_{\mathbb{R}} \int_{\mathbb{R}} e^{-i z \lambda} \langle z \rangle^{-2} (1 - \partial_\lambda^2) \langle \lambda \rangle^{-4} (1 - \partial_z^2)^2 \left\langle \frac{\eta + \lambda}{k} \right\rangle r_P (y + z, \eta) \left\langle \frac{\eta}{k} \right\rangle d \lambda dz.\]
		Using the estimate \eqref{r_P},
		\[\left| \left\langle \frac{\eta}{k} \right\rangle \# r_P \# \left\langle \frac{\eta}{k} \right\rangle \right| \lesssim \epsilon \int_{\mathbb{R}} \int_{\mathbb{R}} \langle z \rangle^{-2} \langle \lambda \rangle^{-4} \left\langle \frac{\eta}{k} \right\rangle \left\langle \lambda \right\rangle \left\langle \frac{\eta}{k} \right\rangle^{-2} \left\langle \frac{\eta}{k} \right\rangle d \lambda dz \lesssim \epsilon.\]
		The same estimate holds for the derivatives of this symbol with respect to \(y\) and \(\eta\). Therefore, by the Calder\'on--Vaillancourt Theorem \ref{thm CV},
		\[\left\| \op \left( \left\langle \frac{\eta}{k} \right\rangle \# r_P \# \left\langle \frac{\eta}{k} \right\rangle \right) \right\|_{L^2 \to L^2} \lesssim \epsilon,\]
		which implies
		\[\| (Q^{-1})^* \mathcal{R}_P Q^{-1} \|_{L^2 \to L^2} \lesssim \epsilon.\]
		Thus, the first estimate holds. The proof of the second is similar.
		
		Now we estimate the dissipation generated by differentiating the weight \(TS\). Directly from the definition of \(T\) and \(S\), we have
		\[\partial_t (TS) = ik V \left[ U, \op \left( \exp \left(\frac{1}{\gamma} \sinh^{-1} \frac{\eta}{k} + \frac{1}{\beta} \tan^{-1} \frac{\eta}{k} \right) \right) \right] V^{-1}.\]
		By the composition formula \eqref{composition_formula},
		\[\exp \left(\frac{1}{\gamma} \sinh^{-1} \frac{\eta}{k} + \frac{1}{\beta} \tan^{-1} \frac{\eta}{k} \right) \# U (y) = \frac{1}{2 \pi} \int_{\mathbb{R}} \int_{\mathbb{R}} e^{-i z \lambda} \exp \left(\frac{1}{\gamma} \sinh^{-1} \frac{\eta + \lambda}{k} + \frac{1}{\beta} \tan^{-1} \frac{\eta + \lambda}{k} \right) U(y + z) d \lambda dz.\]
		Using Taylor's theorem,
		\begin{align*}
			&\exp \left(\frac{1}{\gamma} \sinh^{-1} \frac{\eta + \lambda}{k} + \frac{1}{\beta} \tan^{-1} \frac{\eta + \lambda}{k} \right) \\
			=& \exp \left(\frac{1}{\gamma} \sinh^{-1} \frac{\eta}{k} + \frac{1}{\beta} \tan^{-1} \frac{\eta}{k} \right) + \lambda \frac{1}{k} \left( \frac{1}{\gamma} \left\langle \frac{\eta}{k} \right\rangle^{-1} + \frac{1}{\beta} \left\langle \frac{\eta}{k} \right\rangle^{-2} \right) \exp \left(\frac{1}{\gamma} \sinh^{-1} \frac{\eta}{k} + \frac{1}{\beta} \tan^{-1} \frac{\eta}{k} \right) \\
			&+ \lambda^2 \int_{0}^{1} (1 - \theta) \frac{r_2 (\eta + \theta \lambda)}{k^2} \exp \left(\frac{1}{\gamma} \sinh^{-1} \frac{\eta + \theta \lambda}{k} + \frac{1}{\beta} \tan^{-1} \frac{\eta + \theta \lambda}{k} \right) d \theta.
		\end{align*}
		Here the remainder \(r_2\) satisfies
		\begin{equation}
			\label{r_2}
			| \partial_\lambda^j r_2 (\eta + \theta \lambda) | \lesssim \left\langle \frac{\eta + \theta \lambda}{k} \right\rangle^{-2 - j},
		\end{equation}
		for \(j \in \{0, 1, 2, 3\}\). The zeroth-order term cancels in the commutator, while the first-order term produces the principal contribution. Indeed,
		\begin{align*}
			&\left[ U(y), \exp \left(\frac{1}{\gamma} \sinh^{-1} \frac{\eta}{k} + \frac{1}{\beta} \tan^{-1} \frac{\eta}{k} \right) \right] \\
			=& \frac{i U'(y)}{k} \left( \frac{1}{\gamma} \left\langle \frac{\eta}{k} \right\rangle^{-1} + \frac{1}{\beta} \left\langle \frac{\eta}{k} \right\rangle^{-2} \right) \exp \left(\frac{1}{\gamma} \sinh^{-1} \frac{\eta}{k} + \frac{1}{\beta} \tan^{-1} \frac{\eta}{k} \right) \\
			&- \frac{1}{2 \pi} \int_{\mathbb{R}} \int_{\mathbb{R}} \int_{0}^{1} e^{-i z \lambda} \lambda^2 (1 - \theta) \frac{r_2 (\eta + \theta \lambda)}{k^2} \exp \left(\frac{1}{\gamma} \sinh^{-1} \frac{\eta + \theta \lambda}{k} + \frac{1}{\beta} \tan^{-1} \frac{\eta + \theta \lambda}{k} \right) U(y + z) d \theta d \lambda dz.
		\end{align*}
		
		After multiplying by the prefactor \(ik\) in \(\partial_t(TS)\) and conjugating by \((TS)^{-1}\), we obtain
		\begin{align*}
			\sigma(V^{-1} [\partial_t (TS)] (TS)^{-1} V) =& -U'(y) \left( \frac{1}{\gamma} \left\langle \frac{\eta}{k} \right\rangle^{-1} + \frac{1}{\beta} \left\langle \frac{\eta}{k} \right\rangle^{-2} \right) - \frac{i}{2 \pi} \int_{\mathbb{R}} \int_{\mathbb{R}} \int_{0}^{1} e^{-i z \lambda} \lambda^2 (1 - \theta) \frac{r_2 (\eta + \theta \lambda)}{k} U(y + z) \\
			&\times \exp \left(\frac{1}{\gamma} \sinh^{-1} \frac{\eta + \theta \lambda}{k} + \frac{1}{\beta} \tan^{-1} \frac{\eta + \theta \lambda}{k} \right) \exp \left(- \frac{1}{\gamma} \sinh^{-1} \frac{\eta}{k} - \frac{1}{\beta} \tan^{-1} \frac{\eta}{k} \right) d \theta d \lambda dz.
		\end{align*}
		Denote the second term on the right-hand side by \(r_{TS}\); we now estimate it. The exponential factor involving \(\sinh^{-1}\) requires particular care. We first recall a property of this function, which will be proved later.
		
		\begin{claim}
			\label{cla sinh}
			For any \(\xi, \zeta \in \mathbb{R}\), we have
			\[| \sinh^{-1} (\xi + \zeta) - \sinh^{-1} \xi | \le 2 \log \langle \zeta \rangle + 2 \log 2.\]
		\end{claim}
		
		Using this claim, we have
		\[\exp \left( \frac{1}{\gamma} \sinh^{-1} \frac{\eta + \theta \lambda}{k} - \frac{1}{\gamma} \sinh^{-1} \frac{\eta}{k} \right) \lesssim \left\langle \frac{\theta \lambda}{k} \right\rangle^{2/\gamma} \lesssim \langle \lambda \rangle^3.\]
		Moreover, since the arctangent function is bounded,
		\[\exp \left( \frac{1}{\beta} \tan^{-1} \frac{\eta + \theta \lambda}{k} - \frac{1}{\beta} \tan^{-1} \frac{\eta}{k} \right) \lesssim 1.\]
		Combining these estimates with \eqref{r_2}, we first integrate twice by parts in \(z\). Thus, the factor \(\lambda^2\) is transferred to \(U(y+z)\), so that the resulting integrand contains \(U''(y+z)\) and hence gains the small factor \(\epsilon\). Further integrations by parts with respect to the oscillatory variables \(\lambda\) and \(z\) yield
		\[| r_{TS} (y, \eta) | \lesssim \epsilon \int_{0}^{1} \int_{\mathbb{R}} \int_{\mathbb{R}} \langle z \rangle^{-2} \langle \lambda \rangle^{-8} \left\langle \frac{\eta + \theta \lambda}{k} \right\rangle^{-2} \langle \lambda \rangle^3 d \lambda dz d \theta \lesssim \epsilon \left\langle \frac{\eta}{k} \right\rangle^{-2}.\]
		The same estimate holds for the derivatives of \(r_{TS}\) with respect to \(y\) and \(\eta\). Therefore, by the Calder\'on--Vaillancourt Theorem \ref{thm CV} and the same argument as above, we obtain
		\[\| (Q^{-1})^* \mathcal{R}_{TS} Q^{-1} \|_{L^2 \to L^2} \lesssim \epsilon,\]
		where \(\mathcal{R}_{TS} = V \op (r_{TS}) V^{-1}\).
		
		Therefore, the principal symbol of \([\partial_t(TS)](TS)^{-1}\) coincides with the principal symbol of \(- P^*P - Q^*Q\), while the remaining contribution is a small term of size \(O(\epsilon)\). This proves the lemma.
	\end{proof}
	
	We now prove Claim \ref{cla sinh}.
	
	\begin{proof}[Proof of Claim \ref{cla sinh}]
		Let \(u = \sinh^{-1}(\xi + \zeta)\) and \(v = \sinh^{-1}(\xi)\). Then \(\xi + \zeta = \sinh u\) and \(\xi = \sinh v\). Hence
		\[|\zeta| = |\sinh u - \sinh v| = 2 \left| \cosh \frac{u + v}{2} \sinh \frac{u - v}{2}\right| \ge 2 \left|\sinh \frac{u - v}{2} \right| = 2 \sinh \frac{|u - v|}{2},\]
		where we have used \(\cosh \lambda \ge 1\) for all \(\lambda \in \mathbb{R}\). Since \(\sinh^{-1}\) is increasing, it follows that
		\[\frac{|u - v|}{2} \le \sinh^{-1} \frac{|\zeta|}{2}.\]
		Thus,
		\[|\sinh^{-1}(\xi + \zeta) - \sinh^{-1}(\xi)| = |u - v| \le 2 \sinh^{-1} \frac{|\zeta|}{2}.\]
		It remains to estimate the right-hand side. By the definition of
		\(\sinh^{-1}\),
		\[\sinh^{-1} \frac{|\zeta|}{2} = \log \left( \frac{|\zeta|}{2} + \left\langle \frac{|\zeta|}{2} \right\rangle \right) \le \log \left( 2 \langle \zeta \rangle \right) = \log \langle \zeta \rangle + \log 2,\]
		which implies the claim.
	\end{proof}
	
	Lemma \ref{lem P_Q_TS} shows that \(P^*P\) and \(Q^*Q\) agree with their corresponding principal operators up to remainders that are small relative to \(Q^*Q\). We record the resulting bilinear operator bounds in the following corollary, which will be used repeatedly in the energy estimates below.
	
	\begin{corollary}
		\label{cor operator_bounds}
		For every \(f_1, f_2 \in L^2\), we have
		\[|\langle U' k (-\Delta_t)^{-1/2} f_1, f_2 \rangle| \le \gamma \| Pf_1 \|_{L^2} \| Pf_2 \|_{L^2} + O(\epsilon) \| Qf_1 \|_{L^2} \| Qf_2 \|_{L^2},\]
		and
		\[|\langle U'k^2 (-\Delta_t)^{-1} f_1, f_2 \rangle| \le (\beta + O(\epsilon)) \| Qf_1 \|_{L^2} \| Qf_2 \|_{L^2}.\]
	\end{corollary}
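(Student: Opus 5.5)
The plan is to reduce both bounds to Lemma~\ref{lem P_Q_TS}, applied with the weight removed, that is, with \(TS\) replaced by the identity. The proof of that lemma never used the specific form of \(TS\) in the first two estimates; they are statements about the operators \(P^*P\) and \(Q^*Q\) alone. So for every \(f\in L^2\) we have
\[\|Pf\|_{L^2}^2=\re\langle \mathcal{P}_0 f,f\rangle+O(\epsilon)\|Qf\|_{L^2}^2,\qquad \|Qf\|_{L^2}^2=\re\langle \mathcal{Q}_0 f,f\rangle+O(\epsilon)\|Qf\|_{L^2}^2,\]
where
\[\mathcal{P}_0=V\op\!\left(\gamma^{-1}U'(y)\langle\eta/k\rangle^{-1}\right)V^{-1},\qquad \mathcal{Q}_0=V\op\!\left(\beta^{-1}U'(y)\langle\eta/k\rangle^{-2}\right)V^{-1}.\]
Moreover, the proof actually gives the operator statements \(\|(Q^{-1})^*(P^*P-\mathcal{P}_0)Q^{-1}\|\lesssim\epsilon\), and the analogue for \(Q^*Q-\mathcal{Q}_0\), not merely the quadratic forms.

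The first step is to identify \(U'k(-\Delta_t)^{-1/2}\) and \(U'k^2(-\Delta_t)^{-1}\) with \(\gamma\,\mathcal{P}_0\) and \(\beta\,\mathcal{Q}_0\) up to sign. Since \(V^{-1}(-\Delta_t)V=k^2-\partial_y^2\), we have \((-\Delta_t)^{-1/2}=V\op(\langle\eta/k\rangle^{-1}/|k|)V^{-1}\). The multiplication by \(U'\) on the left commutes with \(V\), and left multiplication by a function of \(y\) composes exactly with a Fourier multiplier (\(a\#b=ab\)). Hence
\[U'k(-\Delta_t)^{-1/2}=\sgn(k)\,\gamma\,\mathcal{P}_0,\qquad U'k^2(-\Delta_t)^{-1}=\beta\,\mathcal{Q}_0,\]
exactly, with no remainder.

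The second step is to write \(\mathcal{P}_0=P^*P-\mathcal{R}_P\) with \(\mathcal{R}_P=Q^*\mathcal{E}Q\) and \(\|\mathcal{E}\|\lesssim\epsilon\), using the operator bound above. Then
\[|\langle U'k(-\Delta_t)^{-1/2}f_1,f_2\rangle|\le\gamma|\langle Pf_1,Pf_2\rangle|+\gamma|\langle\mathcal{E}Qf_1,Qf_2\rangle|\le\gamma\|Pf_1\|\|Pf_2\|+O(\epsilon)\|Qf_1\|\|Qf_2\|.\]
The second estimate follows in the same way: \(\beta\mathcal{Q}_0=\beta Q^*Q-\beta Q^*\mathcal{E}'Q\), which gives the constant \(\beta+O(\epsilon)\beta\le\beta+O(\epsilon)\).

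The main point to verify is that the normalized remainder bound in Lemma~\ref{lem P_Q_TS} really holds as an operator bound, i.e. for the bilinear rather than quadratic form. This is exactly what the Calder\'on--Vaillancourt argument in that proof yields, since it estimates \((Q^{-1})^*\mathcal{R}_PQ^{-1}\) in operator norm. For the \(Q\) case the analogous remainder \(r_Q=\beta^{-1}[\langle\eta/k\rangle^{-1},U']\langle\eta/k\rangle^{-1}\) has symbol bounds \(\epsilon\langle\eta/k\rangle^{-3}\), which is even better. One also needs that \(Q\) is invertible, which holds by positivity of \(U'\) via the same symbolic calculus, as implicitly used in the lemma. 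Finally, the density and domain issues for \(f_i\in L^2\) with \(Qf_i\), \(Pf_i\) finite are routine.
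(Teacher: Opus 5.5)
Your proposal is correct and is essentially the paper's own proof: the paper also extracts from the proof of Lemma~\ref{lem P_Q_TS} the operator-norm bound $\|(Q^{-1})^*(U'|k|(-\Delta_t)^{-1/2}-\gamma P^*P)Q^{-1}\|_{L^2\to L^2}\lesssim\epsilon$, which rests on your observation that $U'k(-\Delta_t)^{-1/2}$ is exactly $\sgn(k)\gamma$ times the principal part of $P^*P$. It then splits the bilinear form into $\gamma\langle Pf_1,Pf_2\rangle$ plus a $Q$-normalized error, and handles the $Q$-estimate the same way; as a small simplification, $Q^{-1}=\beta^{1/2}V\op(\langle \eta/k\rangle)\,(U')^{-1/2}V^{-1}$ is explicit, so its invertibility needs no symbolic calculus.
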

	
	\begin{proof}
		The proof of Lemma \ref{lem P_Q_TS} gives
		\[\| (Q^{-1})^* (U' |k| (-\Delta_t)^{-1/2} - \gamma P^*P) Q^{-1} \|_{L^2 \to L^2} \lesssim \epsilon.\]
		Consequently,
		\begin{align*}
			|\langle U' k (-\Delta_t)^{-1/2} f_1, f_2 \rangle| &\le \gamma |\langle P^*P f_1, f_2 \rangle| + |\langle (U' |k| (-\Delta_t)^{-1/2} - \gamma P^*P) f_1, f_2 \rangle| \\
			&\le \gamma \| Pf_1 \|_{L^2} \| Pf_2 \|_{L^2} + O(\epsilon) \| Qf_1 \|_{L^2} \| Qf_2 \|_{L^2}.
		\end{align*}
		The same argument gives the second estimate. This proves the corollary.
	\end{proof}
	
	\subsection{Commutators involving \(TS\) and related operator bounds}
	
	We shall establish the smallness of the commutators involving the time-dependent weights \(TS\).
	
	\begin{lemma}
		\label{lem TS_commutator}
		For all nonnegative integers \(j\) and \(l\) up to a sufficiently large finite order, we have
		\[|\partial_y^j \partial_\eta^l \sigma (V^{-1} [U',TS] (TS)^{-1} V)| \lesssim \epsilon \left\langle \frac{\eta}{k} \right\rangle^{-1 - l},\]
		and
		\[|\partial_y^j \partial_\eta^l \sigma (V^{-1} [U'',TS] (TS)^{-1} V)| \lesssim \epsilon \left\langle \frac{\eta}{k} \right\rangle^{-1 - l}.\]
	\end{lemma}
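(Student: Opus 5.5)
Since \(V\) is a multiplication operator, it commutes with \(U'\) and \(U''\). Hence \(V^{-1}[U',TS](TS)^{-1}V = [U',\op(m)]\op(m^{-1})\), where
\[m(\eta) = \exp\left(\frac{1}{\gamma}\sinh^{-1}\frac{\eta}{k} + \frac{1}{\beta}\tan^{-1}\frac{\eta}{k}\right)\]
is a Fourier multiplier. The symbol of the composition with the right factor \(\op(m^{-1})\) is ordinary multiplication by \(m(\eta)^{-1}\). Moreover \(U'\# m = U'm\). So the symbol we must estimate is
\[\sigma = \bigl(U'(y)m(\eta) - m\# U'\bigr)\,m(\eta)^{-1}.\]
The argument for \(U''\) is identical, with \(U''\) replaced by \(U'''\) in the final bounds. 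Both are controlled by \(\|U''\|_{W^{n,\infty}}\le\epsilon\), with \(n\) taken larger than the number of derivatives used.

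Exactly as in the proof of Lemma \ref{lem P_Q_TS}, I write \(m\#U'\) through the composition formula \eqref{composition_formula}. I Taylor expand \(m(\eta+\lambda)\) to first order with integral remainder:
\[m(\eta+\lambda) = m(\eta) + \lambda\int_0^1 m'(\eta+\theta\lambda)\,d\theta.\]
The zeroth-order term reproduces \(U'm\) and cancels. What remains is
\[\sigma = -\frac{1}{2\pi}\int_0^1\!\!\int\!\!\int e^{-iz\lambda}\,\lambda\, m'(\eta+\theta\lambda)\,m(\eta)^{-1}\,U'(y+z)\,d\lambda\,dz\,d\theta .\]
Next I use \(\lambda e^{-iz\lambda} = i\partial_z e^{-iz\lambda}\) and integrate by parts in \(z\), which moves the \(\lambda\) onto \(U'\) and produces \(U''(y+z)\), hence a factor \(\epsilon\). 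Note that \(m'/m = \frac1k\bigl(\frac1\gamma\langle\eta/k\rangle^{-1} + \frac1\beta\langle\eta/k\rangle^{-2}\bigr)\). So the integrand carries
\[\frac{m'(\eta+\theta\lambda)}{m(\eta)} = \frac{m'(\eta+\theta\lambda)}{m(\eta+\theta\lambda)}\cdot\frac{m(\eta+\theta\lambda)}{m(\eta)}.\]
The first factor is \(\lesssim |k|^{-1}\langle(\eta+\theta\lambda)/k\rangle^{-1}\). For the second factor, Claim \ref{cla sinh} and the boundedness of \(\tan^{-1}\) give a bound \(\lesssim\langle\lambda/k\rangle^{2/\gamma}\lesssim\langle\lambda\rangle^{3}\).

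It remains to make the integral absolutely convergent and to convert \(\langle(\eta+\theta\lambda)/k\rangle^{-1}\) into \(\langle\eta/k\rangle^{-1}\). For the first, I insert \(\langle z\rangle^{-2}(1-\partial_\lambda^2)\) and \(\langle\lambda\rangle^{-N}(1-\partial_z^2)^{N/2}\) with \(N\) large, say \(N=8\), as in Lemma \ref{lem P_Q_TS}. The \(z\)-derivatives land on \(U''(y+z)\) and only cost more derivatives of \(U''\). The \(\lambda\)-derivatives land on \(m'(\eta+\theta\lambda)/m(\eta)\), and each one preserves the bound above or improves its decay. For the second, I use Peetre's inequality,
\[\langle(\eta+\theta\lambda)/k\rangle^{-1}\le\sqrt2\,\langle\eta/k\rangle^{-1}\langle\lambda/k\rangle\le\sqrt2\,\langle\eta/k\rangle^{-1}\langle\lambda\rangle,\]
and absorb the extra \(\langle\lambda\rangle\) into \(\langle\lambda\rangle^{-N}\). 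Since \(|k|\ge1\), the factor \(|k|^{-1}\) can be discarded. This gives \(|\sigma|\lesssim\epsilon\langle\eta/k\rangle^{-1}\).

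For higher derivatives, \(\partial_y^j\) falls only on \(U''(y+z)\), so it costs only more derivatives of \(U'\). The step I expect to be the main obstacle is \(\partial_\eta^l\), which must produce the extra decay \(\langle\eta/k\rangle^{-l}\). The derivative hits \(m'(\eta+\theta\lambda)/m(\eta)\), and I would differentiate the two factors separately. For the \(\sinh^{-1}\) part, \(\partial_\eta\log m = O(|k|^{-1}\langle\eta/k\rangle^{-1})\), and \(\partial_\eta\) of \(m'/m\) at the shifted point gains \(|k|^{-1}\langle(\eta+\theta\lambda)/k\rangle^{-1}\). In every term Peetre's inequality converts shifted decay back to \(\langle\eta/k\rangle^{-1}\) at the cost of powers of \(\langle\lambda\rangle\). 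These powers are absorbed by taking \(N\) larger, with a corresponding increase in \(n\). In the \(\tan^{-1}\) part the \(\eta\)-derivatives are also gains. Dropping the harmless factors \(|k|^{-l}\le1\) then yields \(\epsilon\langle\eta/k\rangle^{-1-l}\), and the claimed bounds hold for all \(j,l\) up to the finite order permitted by \(n\).
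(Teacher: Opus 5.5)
Your proposal is correct and takes essentially the paper's route. The paper proves this lemma by pointing back to the Taylor-expansion and oscillatory integration-by-parts computation of \([\partial_t(TS)](TS)^{-1}\) in Lemma \ref{lem P_Q_TS}, with \(ikU\) replaced by \(U'\) or \(U''\) and Claim \ref{cla sinh} controlling \(m(\eta+\theta\lambda)/m(\eta)\). Your first-order expansion with a single integration by parts in \(z\), followed by Peetre's inequality for the \(\eta\)-derivatives, is a slightly more streamlined version of that same computation.
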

	
	\begin{proof}
		The proof is the same as the computation of \([\partial_t(TS)](TS)^{-1}\) in the proof of Lemma \ref{lem P_Q_TS}. Indeed, replacing the multiplication operator \(ikU\) in that computation by \(U'\) or \(U''\), respectively, the same symbolic estimates apply.
	\end{proof}
	
	With the help of Lemma \ref{lem TS_commutator}, we obtain bounds for several operators that will be used later.
	
	\begin{corollary}
		\label{cor TS_commutator_bound}
		The following operator bounds hold:
		\[\left\| (Q^*)^{-1} [U',TS] (TS)^{-1} k(-\Delta_t)^{-1/2} Q^{-1} \right\|_{L^2 \to L^2} \lesssim \epsilon,\]
		\[\left\| (Q^*)^{-1} [U'',TS] (TS)^{-1} k(-\Delta_t)^{-1/2} Q^{-1} \right\|_{L^2 \to L^2} \lesssim \epsilon.\]
	\end{corollary}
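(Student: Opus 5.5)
We conjugate everything by the unitary operator \(V\) and reduce the bound to a symbol estimate plus the Calder\'on--Vaillancourt Theorem \ref{thm CV}, exactly as in the proof of Lemma \ref{lem P_Q_TS}. Since \(V\) is unitary and commutes with multiplication by \(U'\) and \(U''\), we have
\[
V^{-1}(Q^*)^{-1}[U',TS](TS)^{-1}k(-\Delta_t)^{-1/2}Q^{-1}V = \op(\tilde q)\,\op(c)\,\op\bigl(k\langle k\rangle_\eta^{-1}\bigr)\,\op(q^{-1}),
\]
where \(c = \sigma(V^{-1}[U',TS](TS)^{-1}V)\). Here \(k(-\Delta_t)^{-1/2}\) conjugates to the Fourier multiplier \(\operatorname{sgn}(k)\langle \eta/k\rangle^{-1}\), and \(Q^{-1}\) conjugates to \(\op(q)^{-1}\) with \(q = \beta^{-1/2}U'(y)^{1/2}\#\langle \eta/k\rangle^{-1}\). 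Thus
\[
Q^{-1} \cong \op(\langle\eta/k\rangle)\circ \beta^{1/2}U'^{-1/2},
\]
and similarly \((Q^*)^{-1} \cong \beta^{1/2}U'^{-1/2}\circ \op(\langle\eta/k\rangle)\). Since \(U'\) is bounded above and below, with derivatives controlled by \(\epsilon\), the multiplication factors \(U'^{-1/2}\) and the constant \(\beta\) are harmless.

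It therefore suffices to show that the symbol
\[
a := \left\langle \frac{\eta}{k}\right\rangle \# c \# \left(\left\langle \frac{\eta}{k}\right\rangle^{-1}\left\langle \frac{\eta}{k}\right\rangle\right)
\]
satisfies \(|\partial_y^j\partial_\eta^l a|\lesssim \epsilon\) for \(j,l\in\{0,1\}\). The right-multiplying Fourier multipliers simply multiply the symbol: \(c\#m(\eta) = c\,m\). The two factors \(\langle\eta/k\rangle^{-1}\) and \(\langle\eta/k\rangle\) on the right cancel, so effectively \(a = \langle\eta/k\rangle\# c\). Written out,
\[
a = \frac{1}{2\pi}\iint e^{-iz\lambda}\langle(\eta+\lambda)/k\rangle\, c(y+z,\eta)\,d\lambda\,dz.
\]
Using Lemma \ref{lem TS_commutator}, \(|\partial_y^j c|\lesssim \epsilon\langle\eta/k\rangle^{-1}\). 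We then integrate by parts with \(\langle z\rangle^{-2}(1-\partial_\lambda^2)\) and \(\langle\lambda\rangle^{-4}(1-\partial_z^2)^2\), as in the estimate of \(\langle\eta/k\rangle\#r_P\#\langle\eta/k\rangle\). This uses Peetre's inequality \(\langle(\eta+\lambda)/k\rangle\le \langle\eta/k\rangle\langle\lambda\rangle\) (valid since \(|k|\ge1\)) and gives
\[
|a|\lesssim \epsilon\iint\langle z\rangle^{-2}\langle\lambda\rangle^{-3}\,d\lambda\,dz\lesssim\epsilon.
\]

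Derivatives are handled the same way. A \(y\)-derivative falls on \(c\) and keeps the bound. An \(\eta\)-derivative falls either on \(\langle(\eta+\lambda)/k\rangle\), which gains \(|k|^{-1}\), or on \(c\), which by Lemma \ref{lem TS_commutator} gains \(\langle\eta/k\rangle^{-1}\). Either way the bound stays \(O(\epsilon)\). Calder\'on--Vaillancourt then gives the operator norm \(\lesssim\epsilon\).

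We then reattach the outer multiplication factors \(\beta^{1/2}U'^{-1/2}\), which are bounded operators with constants depending only on \(U'_{\inf}\) and \(\beta\). The \(U''\) case is identical, using the second estimate of Lemma \ref{lem TS_commutator}.

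The main obstacle is the bookkeeping of \(Q^{-1}\). Because \(Q\) is defined through the non-commuting product \(U'^{1/2}\#\langle\eta/k\rangle^{-1/2}\)-type symbol, one must check that the order in which the inverse factors appear is correct. The multiplication factor must sit on the outside, so that the multiplier \(\langle\eta/k\rangle\) coming from \(Q^{-1}\) on the right cancels exactly against \(k(-\Delta_t)^{-1/2}\)'s decay. If instead a multiplication \(U'^{-1/2}\) sits between them, we would commute it past \(\langle\eta/k\rangle\) at the cost of an \(O(\epsilon)\) lower-order commutator, bounded by the same oscillatory-integral argument. Either way, the single unit of decay \(\langle\eta/k\rangle^{-1}\) from Lemma \ref{lem TS_commutator} is exactly what absorbs the growth \(\langle\eta/k\rangle\) from \((Q^*)^{-1}\).
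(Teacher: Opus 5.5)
Your proposal is correct and follows the paper's proof. After conjugating by \(V\), the operator becomes \(\beta\,\sgn(k)\,(U')^{-1/2}\op\bigl(\langle \eta/k\rangle \# c\bigr)(U')^{-1/2}\), with the multiplier from \(Q^{-1}\) cancelling that of \(k(-\Delta_t)^{-1/2}\) exactly as you track; the middle symbol and its derivatives are then bounded by \(O(\epsilon)\) using the composition formula and Lemma \ref{lem TS_commutator}, and the Calder\'on--Vaillancourt Theorem \ref{thm CV} finishes the argument. You simply write out the integration by parts that the paper leaves implicit; two harmless slips are that Peetre's inequality needs a factor \(\sqrt{2}\) and that you dropped the factor \(\sgn k\).
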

	
	\begin{proof}
		After conjugating by \(V\), the symbol of the first operator is
		\[\sigma (V^{-1} (Q^*)^{-1} [U',TS] (TS)^{-1} k(-\Delta_t)^{-1/2} Q^{-1} V) = \beta (\sgn k) U'(y)^{-1/2} \# \left\langle \frac{\eta}{k} \right\rangle \# \sigma (V^{-1} [U',TS] (TS)^{-1} V ) \# U'(y)^{-1/2}.\]
		Since \(U'\) is bounded from above and below, it suffices to estimate the middle composition. By the composition formula \eqref{composition_formula} and Lemma \ref{lem TS_commutator}, this symbol and all its derivatives up to the order required in the Calder\'on--Vaillancourt Theorem \ref{thm CV} are bounded by \(O(\epsilon)\). The first estimate therefore follows from that theorem. The second estimate follows in exactly the same way.
	\end{proof}
	
	We next establish several operator bounds that will be used below.
	
	\begin{lemma}
		\label{lem U'_weighted_bound}
		The following operator bounds hold:
		\begin{align*}
			\| PTSU'(TS)^{-1}P^{-1} \|_{L^2 \to L^2} &\le (1 + O(\epsilon)) U'_{\sup}, & \| QTSU'(TS)^{-1}Q^{-1} \|_{L^2 \to L^2} &\le (1 + O(\epsilon)) U'_{\sup}, \\
			\| PTSU''(TS)^{-1}P^{-1} \|_{L^2 \to L^2} &\lesssim \epsilon, & \| QTSU''(TS)^{-1}Q^{-1} \|_{L^2 \to L^2} &\lesssim \epsilon.
		\end{align*}
	\end{lemma}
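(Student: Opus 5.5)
The plan is to commute the multiplication operator through the weight $TS$, apply Lemma \ref{lem TS_commutator} to the resulting commutator, and then sandwich with $P$ (resp.\ $Q$). Write
\[
TSU'(TS)^{-1} = U' + [TS,U'](TS)^{-1} = U' - [U',TS](TS)^{-1},
\]
so that
\[
PTSU'(TS)^{-1}P^{-1} = PU'P^{-1} - P[U',TS](TS)^{-1}P^{-1},
\]
and similarly with $Q$ in place of $P$ and $U''$ in place of $U'$. The estimate then splits into a principal term $PU'P^{-1}$ and a commutator term. In the $U''$ case the principal term $PU''P^{-1}$ is itself $O(\epsilon)$, since $\|U''\|_{W^{n,\infty}}\le\epsilon$.

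\textbf{Commutator term.} After conjugating by $V$, the operator $P[U',TS](TS)^{-1}P^{-1}$ has symbol
\[
\gamma^{-1/2}U'^{1/2}\#\langle\eta/k\rangle^{-1/2}\#\sigma\bigl(V^{-1}[U',TS](TS)^{-1}V\bigr)\#\langle\eta/k\rangle^{1/2}\#(\gamma^{-1/2}U'^{1/2})^{-1}.
\]
The inverse $P^{-1}$ is understood up to an $O(\epsilon)$ correction, or better, we write $P^{-1}$ as $V\op(\langle\eta/k\rangle^{1/2})V^{-1}$ times $\gamma^{1/2}U'^{-1/2}$ as a left factor, using the factorization $\sigma(V^{-1}PV)=\gamma^{-1/2}U'^{1/2}\#\langle\eta/k\rangle^{-1/2}$ already used in Lemma \ref{lem P_Q_TS}.

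By Lemma \ref{lem TS_commutator}, the middle symbol satisfies bounds $\epsilon\langle\eta/k\rangle^{-1-l}$. The outer multipliers have net order $0$ (resp.\ $+1/2-1/2$). Hence the composition formula \eqref{composition_formula}, with Taylor expansion and integration by parts in $z,\lambda$ exactly as in the proof of Lemma \ref{lem P_Q_TS}, gives a symbol bounded by $O(\epsilon)$ together with its $y,\eta$ derivatives. The factors $\langle(\eta+\lambda)/k\rangle^{\pm1/2}\lesssim\langle\eta/k\rangle^{\pm1/2}\langle\lambda\rangle^{1/2}$ are absorbed by the decay in $\lambda$, using $|k|\ge1$. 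Calder\'on--Vaillancourt (Theorem \ref{thm CV}) then gives operator norm $O(\epsilon)$. The same argument handles $Q$, where the weights are $\langle\eta/k\rangle^{\mp1}$; there the extra gain $\langle\eta/k\rangle^{-1}$ from Lemma \ref{lem TS_commutator} is more than enough.

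\textbf{Principal term.} Since $U'^{\pm1/2}$ commute with $U'$, we get
\[
PU'P^{-1}=V\,\gamma^{-1/2}U'^{1/2}\op(\langle\eta/k\rangle^{-1/2})\,U'\,\op(\langle\eta/k\rangle^{1/2})\,U'^{-1/2}\gamma^{1/2}V^{-1}
=U'+V\,U'^{1/2}\bigl[\op(\langle\eta/k\rangle^{-1/2}),U'\bigr]\op(\langle\eta/k\rangle^{1/2})U'^{-1/2}V^{-1}.
\]
Multiplication by $U'$ has norm exactly $U'_{\sup}$. The commutator $[\langle\eta/k\rangle^{-1/2},U']$ was shown in Lemma \ref{lem P_Q_TS} to be bounded by $\epsilon\langle\eta/k\rangle^{-3/2}$, with derivatives, so composing with $\langle\eta/k\rangle^{1/2}$ and the bounded multipliers yields an $O(\epsilon)$ operator, again by Theorem \ref{thm CV}. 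This gives the bound $U'_{\sup}+O(\epsilon)=(1+O(\epsilon))U'_{\sup}$, using $U'_{\sup}\ge U'_{\inf}>0$ to absorb the additive error. The $Q$ case is identical, with $\langle\eta/k\rangle^{\mp1}$.

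\textbf{Main obstacle.} The delicate point is the sharp constant $1$ in front of $U'_{\sup}$: everything except the multiplication by $U'$ must be shown to be genuinely $O(\epsilon)$, not merely $O(1)$. This hinges on each commutator producing a derivative of $U'$, and on the growing weights $\langle(\eta+\lambda)/k\rangle^{1/2}$ or $\langle(\eta+\lambda)/k\rangle$ being controlled by powers of $\langle\lambda\rangle$ that the integrations by parts can absorb. This requires enough derivatives of $U''$, provided by the $W^{n,\infty}$ assumption. The factor $T$ with its exponential growth is already handled inside Lemma \ref{lem TS_commutator} via Claim \ref{cla sinh}.
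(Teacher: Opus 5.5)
Your proposal is correct and follows essentially the same route as the paper. Writing $PU'P^{-1} = U' + [P,U']P^{-1}$ through the exact factorization of $P$ turns your splitting into the paper's three-term decomposition $U' + [P,U']P^{-1} + P[TS,U'](TS)^{-1}P^{-1}$. As in the paper, both error terms are then shown to be $O(\epsilon)$ via the composition formula, Lemma \ref{lem TS_commutator}, and the Calder\'on--Vaillancourt theorem; the $Q$ and $U''$ cases work the same way.
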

	
	\begin{proof}
		It suffices to prove the first estimate, since the others follow from the same argument.
		
		We decompose
		\[P TS U' (TS)^{-1} P^{-1} = U' + [P,U'] P^{-1} + P [TS,U'] (TS)^{-1} P^{-1}.\]
		After conjugating by \(V\), the symbol of the last operator is
		\[\sigma (V^{-1} P [TS,U'] (TS)^{-1} P^{-1} V) = U'(y)^{1/2} \# \left\langle \frac{\eta}{k} \right\rangle^{-1/2} \# \sigma \left( V^{-1} [TS,U'] (TS)^{-1} V \right) \# \left\langle \frac{\eta}{k} \right\rangle^{1/2} \# U'(y)^{-1/2}.\]
		Since \(U'\) is bounded from above and below, it suffices to estimate the composition of the three middle factors. By the composition formula \eqref{composition_formula}, the middle three factors are given by
		\begin{align*}
			&\left\langle \frac{\eta}{k} \right\rangle^{-1/2} \# \sigma \left( V^{-1} [TS,U'] (TS)^{-1} V \right) \# \left\langle \frac{\eta}{k} \right\rangle^{1/2} \\
			=& \frac{1}{2\pi} \int_{\mathbb R} \int_{\mathbb R} e^{-iz\lambda} \left\langle \frac{\eta + \lambda}{k} \right\rangle^{-1/2} \sigma \left( V^{-1} [TS,U'] (TS)^{-1} V \right) (y + z,\eta) \left\langle \frac{\eta}{k} \right\rangle^{1/2} d\lambda dz.
		\end{align*}
		Integrating by parts sufficiently many times in the oscillatory variables and using Lemma \ref{lem TS_commutator}, we obtain
		\[\left| \partial_y^j \partial_\eta^l \left\langle \frac{\eta}{k} \right\rangle^{-1/2} \# \sigma \left( V^{-1} [TS,U'] (TS)^{-1} V \right) \# \left\langle \frac{\eta}{k} \right\rangle^{1/2} \right| \lesssim \epsilon, \qquad j,l \in \{0,1\}.\]
		The Calder\'on--Vaillancourt Theorem \ref{thm CV} and the boundedness of \(U'\) therefore give
		\[\left\| P [U',TS] (TS)^{-1} P^{-1} \right\|_{L^2 \to L^2} \lesssim \epsilon.\]
		The same composition argument, applied to \([P,U']P^{-1}\), shows that
		\[\| [P,U'] P^{-1} \|_{L^2 \to L^2} \lesssim \epsilon,\]
		since every term in this commutator contains at least one derivative of \(U'\). Combining these estimates with \(\| U' \|_{L^\infty} = U'_{\sup}\), we obtain
		\[\| P TS U' (TS)^{-1} P^{-1} \|_{L^2 \to L^2} \le (1 + O(\epsilon)) U'_{\sup}.\]
		
		The other estimates follow in the same way.
	\end{proof}
	
	We also record an operator bound for the time derivative of \((-\Delta_t)^{-1/2}\).
	
	\begin{lemma}
		\label{lem partial_t_Delta}
		The following operator bound holds:
		\[\| (Q^*)^{-1} TS k \partial_t (-\Delta_t)^{-1/2} (TS)^{-1} Q^{-1} \|_{L^2 \to L^2} \le \beta + O(\epsilon).\]
	\end{lemma}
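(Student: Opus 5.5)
The plan is to compute $\partial_t(-\Delta_t)^{-1/2}$ symbolically, exactly as $\partial_t(TS)$ was computed in the proof of Lemma \ref{lem P_Q_TS}. I will then factor its principal part as $Q^*\mathcal{B}Q$, where $\mathcal{B}$ is a Fourier multiplier (conjugated by $V$) of norm at most $\beta$. All remaining pieces will be placed in $O(\epsilon)$ remainders, controlled by Calder\'on--Vaillancourt (Theorem \ref{thm CV}) together with Lemma \ref{lem TS_commutator}.

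\textbf{Step 1 (the time derivative).} We have $(-\Delta_t)^{-1/2}=V\op(m)V^{-1}$ with $m(\eta)=(k^2+\eta^2)^{-1/2}$. Since $\partial_t V=ikUV$,
\[
\partial_t(-\Delta_t)^{-1/2}=ik\,V[U,\op(m)]V^{-1}.
\]
I will expand $m(\eta+\lambda)$ by Taylor's theorem to second order inside the composition formula \eqref{composition_formula}, as in the computation of $r_{TS}$.

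The zeroth-order term cancels. The first-order term gives the principal symbol
\[
ik\cdot iU'(y)\,\partial_\eta m=kU'(y)\,\eta\,(k^2+\eta^2)^{-3/2}.
\]
After multiplying by $k$, this is
\[
U'(y)\,\frac{k^2}{k^2+\eta^2}\cdot\frac{\eta}{\sqrt{k^2+\eta^2}}.
\]
The second-order remainder carries $\lambda^2$. Integrating by parts twice in $z$ moves the derivatives onto $U$, producing $U'''$ and hence a factor $\epsilon$. Further integrations by parts should bound this remainder, and its derivatives, by $\epsilon\langle \eta/k\rangle^{-2}$ in symbol terms. This is precisely the $Q^*Q$ scale, so the remainder contributes $O(\epsilon)$ after normalization by $(Q^*)^{-1}\cdot Q^{-1}$, by the same argument used for $\mathcal{R}_{TS}$.

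\textbf{Step 2 (factorization with the sharp constant).} Set $\mathcal{B}=\beta\,V\op(\eta/\sqrt{k^2+\eta^2})V^{-1}$. This is a $V$-conjugated Fourier multiplier, so $\|\mathcal{B}\|_{L^2\to L^2}\le\beta$ by Plancherel. It also commutes with $TS$ and with $(-\Delta_t)^{-1/2}$.

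I claim that the principal operator $V\op\bigl(U'k^2\eta(k^2+\eta^2)^{-3/2}\bigr)V^{-1}$ equals $Q^*\mathcal{B}Q$ up to an error $\mathcal{E}$ with $\|(Q^*)^{-1}\mathcal{E}Q^{-1}\|\lesssim\epsilon$. Indeed, the symbol of $V^{-1}Q^*\mathcal{B}QV$ is
\[
\beta^{-1}\langle\eta/k\rangle^{-1}\#\,U'^{1/2}\#\,\beta\frac{\eta}{|k|\langle\eta/k\rangle}\#\,U'^{1/2}\#\,\langle\eta/k\rangle^{-1}.
\]
Moving the multiplication factors $U'^{1/2}$ past the multipliers costs commutators, each of which contains a derivative of $U'$. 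By the estimate behind \eqref{r_P}, each such commutator is $\epsilon$ times one extra order of decay. After normalization, these terms are again bounded by $\epsilon$ via Calder\'on--Vaillancourt. (One has to track the factor $\sgn k$ coming from $k/|k|$, but it has modulus one.)

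\textbf{Step 3 (conjugation by $TS$).} It remains to handle
\[
(Q^*)^{-1}\,TS\,Q^*\mathcal{B}Q\,(TS)^{-1}Q^{-1}.
\]
The operator $TS$ commutes with $\mathcal{B}$ and with the Fourier-multiplier parts of $Q$ and $Q^*$. The only non-commuting pieces are the factors $U'^{1/2}$.

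I will write
\[
TS\,U'^{1/2}(TS)^{-1}=U'^{1/2}+[TS,U'^{1/2}](TS)^{-1}.
\]
By Lemma \ref{lem TS_commutator}, applied with $U'^{1/2}$ in place of $U'$ (its derivatives are still $O(\epsilon)$ because $U'\ge U'_{\inf}$), this commutator has symbol $O(\epsilon)\langle\eta/k\rangle^{-1}$. Sandwiched between $\langle\eta/k\rangle$ factors, it is bounded by $\epsilon$, as in Corollary \ref{cor TS_commutator_bound}.

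The main term is then $(Q^*)^{-1}Q^*\mathcal{B}QQ^{-1}=\mathcal{B}$, which has norm at most $\beta$. Adding the $O(\epsilon)$ errors gives the bound $\beta+O(\epsilon)$.

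\textbf{Main obstacle.} The delicate point is keeping the constant exactly $\beta$, not merely $\lesssim\beta$. This is why I factor through $Q^*\mathcal{B}Q$ and use the pointwise bound $|\eta|/\sqrt{k^2+\eta^2}\le1$, rather than estimating the full symbol crudely by Calder\'on--Vaillancourt, which would only give a bound up to an implicit constant. Every error must be shown to sit genuinely at the $Q^*Q$ scale, meaning symbol decay $\epsilon\langle\eta/k\rangle^{-2}$. For the second-order Taylor remainder, the integrations by parts have to be carried out carefully so that the weight $\langle(\eta+\theta\lambda)/k\rangle^{-3}$ is converted into $\langle\eta/k\rangle^{-2}$ with only polynomial loss in $\lambda$.
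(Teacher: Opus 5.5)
Your proposal is correct and is essentially the paper's argument. The paper likewise computes $k\,\partial_t(-\Delta_t)^{-1/2}$ by a second-order Taylor expansion inside the composition formula, isolates the exact Fourier multiplier $\beta\,\eta/\sqrt{k^2+\eta^2}$ (norm $\beta$ by Plancherel) as the principal part of the $TS$-conjugated, $Q$-normalized operator, and bounds everything else by $O(\epsilon)$ via Lemma~\ref{lem TS_commutator}, Claim~\ref{cla sinh} and Theorem~\ref{thm CV}; the only difference is that it conjugates by $TS$ before normalizing, rather than first factoring through $Q^*\mathcal{B}Q$. Two small fixes: the remainders $\mathcal{R}_\Delta$ and $\mathcal{E}$ must also be conjugated by $TS$, not only normalized by $Q$, which is handled with Claim~\ref{cla sinh} exactly as the paper treats $\tau\# r_\Delta\#\tau^{-1}$; and the double integration by parts in $z$ produces $U''$, not $U'''$.
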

	
	\begin{proof}
		Since \((-\Delta_t)^{-1/2} = V \op ((k^2 + \eta^2)^{-1/2}) V^{-1}\), we have
		\[\sigma (V^{-1} \partial_t (-\Delta_t)^{-1/2} V) = ik [U, (k^2 + \eta^2)^{-1/2}] = ik U (k^2 + \eta^2)^{-1/2} - \frac{ik}{2\pi} \int_{\mathbb{R}} \int_{\mathbb{R}} e^{-iz\lambda} (k^2 + (\eta + \lambda)^2)^{-1/2} U(y + z) d \lambda dz.\]
		By Taylor's theorem,
		\[(k^2 + (\eta + \lambda)^2)^{-1/2} = (k^2 + \eta^2)^{-1/2} - \lambda \frac{\eta}{(k^2 + \eta^2)^{3/2}} + \lambda^2 \int_{0}^{1} (1 - \theta) \partial_\eta^2 \left(k^2 + (\eta + \theta\lambda)^2 \right)^{-1/2} d\theta.\]
		Substituting this expansion into the composition formula and integrating by parts, we obtain
		\[\sigma (V^{-1} k \partial_t (-\Delta_t)^{-1/2} V) = U' \frac{k^2 \eta}{(k^2 + \eta^2)^{3/2}} + r_\Delta(y, \eta),\]
		where, for all nonnegative \(j\) and \(l\) up to the finite order required below,
		\begin{equation}
			\label{r_Delta}
			|\partial_y^j \partial_\eta^l r_\Delta(y,\eta)| \lesssim \epsilon \left\langle \frac{\eta}{k} \right\rangle^{-3-l}.
		\end{equation}
		Conjugating by \(TS\) produces
		\begin{align*}
			\sigma (V^{-1} TS k \partial_t (-\Delta_t)^{-1/2} (TS)^{-1} V) =& U' \frac{k^2 \eta}{(k^2 + \eta^2)^{3/2}} + \sigma (V^{-1} [TS,U'] (TS)^{-1} V) \frac{k^2 \eta}{(k^2 + \eta^2)^{3/2}} \\
			&+ \sigma (V^{-1} TS V) \# r_\Delta(y,\eta) \# \sigma (V^{-1} (TS)^{-1} V).
		\end{align*}
		We denote the sum of the last two terms by \(r_{\Delta, TS}\). Combining the bounds from Lemma \ref{lem TS_commutator}, estimate \eqref{r_Delta}, and Claim \ref{cla sinh}, we obtain
		\[|\partial_y^j \partial_\eta^l r_{\Delta, TS}| \lesssim \epsilon \left\langle \frac{\eta}{k} \right\rangle^{-3-l}.\]
		
		After normalization by \((Q^*)^{-1}\) and \(Q^{-1}\), the resulting symbol is
		\begin{align*}
			&\sigma (V^{-1} (Q^*)^{-1} TS k \partial_t (-\Delta_t)^{-1/2} (TS)^{-1} Q^{-1} V) \\
			=& \beta U'(y)^{-1/2} \# \left\langle \frac{\eta}{k} \right\rangle \# \left( U'(y) \frac{k^2\eta} {(k^2+\eta^2)^{3/2}} + r_{\Delta, TS} (y, \eta) \right) \# \left\langle \frac{\eta}{k} \right\rangle \# U'(y)^{-1/2}.
		\end{align*}
		The composition expansion \eqref{composition_expansion} identifies the principal symbol as \(\beta \frac{\eta}{\sqrt{k^2 + \eta^2}}\). The corresponding Fourier multiplier has \(L^2\)-operator norm \(\beta\). Moreover, the exact composition formula \eqref{composition_formula}, the preceding estimate for \(r_{\Delta,TS}\), and the smallness of the derivatives of \(U'\) show that the difference between the full symbol and its principal part is bounded by \(O(\epsilon)\). The same bound holds for all its derivatives up to the order required in the Calder\'on--Vaillancourt Theorem \ref{thm CV}. This completes the proof.
	\end{proof}
	
	\subsection{A comparison principle}
	
	We now record a comparison property for the Fourier multipliers appearing in the weighted energy estimates. The following comparison principle allows a multiplier to be replaced by a pointwise larger one in the weighted \(L^2\)-norm, up to \(O(\epsilon)\) errors.
	
	\begin{lemma}
		\label{lem comparison}
		Let
		\[\mathcal{M} = V \op (m(\eta)) V^{-1}, \qquad \mathcal{N} = V \op (n(\eta)) V^{-1},\]
		where \(m\) and \(n\) are symbols, depending only on \(\eta\), satisfying
		\[|m(\eta)| \le |n(\eta)| \ne 0,\]
		and for \(l \in \{0, 1, \cdots, 4\}\),
		\[\left| \partial_\eta^l \frac{m}{n} (\eta) \right| \lesssim \left\langle \frac{\eta}{k} \right\rangle^{-l}.\]
		Then,
		\[\| P \mathcal{M} f \|_{L^2}^2 \le \| P \mathcal{N} f \|_{L^2}^2 + O(\epsilon) \| Q \mathcal{N} f \|_{L^2}^2, \qquad \| Q \mathcal{M} f \|_{L^2} \le (1 + O(\epsilon)) \| Q \mathcal{N} f \|_{L^2},\]
		for every \(f\) such that the above quantities are finite.
	\end{lemma}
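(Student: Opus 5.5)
Write $\mathcal{M}=\mathcal{R}\mathcal{N}$ with $\mathcal{R}=V\op(r(\eta))V^{-1}$ and $r=m/n$. Since all three operators are $V$-conjugated Fourier multipliers, they commute. By hypothesis, $|r|\le 1$ and $|\partial_\eta^l r|\lesssim\langle \eta/k\rangle^{-l}$ for $l\le 4$. Set $g=\mathcal{N}f$. Both inequalities then reduce to statements about $\mathcal{R}$ acting on $g$:
\[
\|P\mathcal{R}g\|_{L^2}^2\le \|Pg\|_{L^2}^2+O(\epsilon)\|Qg\|_{L^2}^2,\qquad \|Q\mathcal{R}g\|_{L^2}\le(1+O(\epsilon))\|Qg\|_{L^2}.
\]

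For the first inequality, I would use the first estimate of Lemma \ref{lem P_Q_TS}, or more precisely its proof, which shows that $P^*P-V\op(\gamma^{-1}U'\langle\eta/k\rangle^{-1})V^{-1}$ is $O(\epsilon)$ relative to $Q^*Q$. The argument there never uses the $TS$ factor, so it applies to any function. This replaces $\|P\mathcal{R}g\|^2$ by $\re\langle \gamma^{-1}\mathcal{R}^*\mathcal{B}\mathcal{R}g,g\rangle$ with $\mathcal{B}=V\op(U'\langle\eta/k\rangle^{-1})V^{-1}$, at a cost of $O(\epsilon)\|Q\mathcal{R}g\|^2$; by the second inequality proved below, this cost is $O(\epsilon)\|Qg\|^2$. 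Next I would commute $U'$ past $\mathcal{R}$. Conjugating by $V$, the symbol of $\mathcal{R}^*\mathcal{B}\mathcal{R}$ equals $\bar r\#U'(y)\#\big(\langle\eta/k\rangle^{-1}r\big)$. Taylor expanding $\bar r(\eta+\lambda)$ in the composition formula \eqref{composition_formula} and integrating by parts, as in the proof of Lemma \ref{lem P_Q_TS}, gives principal symbol $|r|^2U'\langle\eta/k\rangle^{-1}$. The remainder carries at least one derivative of $U'$ and one $\eta$-derivative of $r$, so it is bounded by $\epsilon\langle\eta/k\rangle^{-2}$ together with enough derivatives. After normalizing by $(Q^{-1})^*$ and $Q^{-1}$, Calder\'on--Vaillancourt (Theorem \ref{thm CV}) shows it is $O(\epsilon)$ relative to $Q^*Q$.

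The main term needs an operator inequality. Since $1-|r|^2\ge0$, I would write
\[
U'\langle\eta/k\rangle^{-1}-|r|^2U'\langle\eta/k\rangle^{-1}=\big(\sqrt{1-|r|^2}\,\langle\eta/k\rangle^{-1/2}\big)\#U'\#\big(\sqrt{1-|r|^2}\,\langle\eta/k\rangle^{-1/2}\big)+O(\epsilon)\text{-remainder},
\]
and the first term on the right is a nonnegative operator. This gives $\re\langle\mathcal{R}^*\mathcal{B}\mathcal{R}g,g\rangle\le\re\langle\mathcal{B}g,g\rangle+O(\epsilon)\|Qg\|^2$, and applying Lemma \ref{lem P_Q_TS} once more turns $\re\langle\gamma^{-1}\mathcal{B}g,g\rangle$ back into $\|Pg\|^2$, completing the first inequality.

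\textbf{Main obstacle.} The square root $\sqrt{1-|r|^2}$ may fail to be smooth where $|r|=1$, so its symbol bounds are not automatic. To avoid this, I would not take a square root at all and instead keep $U'$ at the level of symbols: $\langle(\mathcal{B}-\mathcal{R}^*\mathcal{B}\mathcal{R})g,g\rangle$ is, up to $O(\epsilon)$ commutators, equal to $\langle U'^{1/2}\mathcal{C}g,\,U'^{1/2}\mathcal{C}'g\rangle$-type expressions. Alternatively, I would use the identity $\langle U'h,h\rangle-\langle U'\mathcal{R}h,\mathcal{R}h\rangle$ with $h=\op(\langle\eta/k\rangle^{-1/2})g$ and argue through $U'^{1/2}$: $\|U'^{1/2}\mathcal{R}h\|\le\|\mathcal{R}U'^{1/2}h\|+\|[U'^{1/2},\mathcal{R}]h\|\le\|U'^{1/2}h\|+O(\epsilon)\|\langle\eta/k\rangle^{-1}h\|$. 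Here $\|\mathcal{R}\|\le1$ by Plancherel, and the commutator gains $\langle\eta/k\rangle^{-1}$ with an $\epsilon$, which Theorem \ref{thm CV} justifies.

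This commutator argument also proves the second inequality directly: with $Q=V U'^{1/2}\op(\beta^{-1/2}\langle\eta/k\rangle^{-1})V^{-1}$, we get $\|Q\mathcal{R}g\|\le\|\mathcal{R}Qg\|+\|[Q,\mathcal{R}]g\|\le\|Qg\|+O(\epsilon)\|Qg\|$. The commutator $[U'^{1/2},\mathcal{R}]$ is $O(\epsilon)$ and gains $\langle\eta/k\rangle^{-1}$, so after right-normalizing by $Q^{-1}$ it is $O(\epsilon)$ in $L^2$. The only care needed is to count how many $\eta$-derivatives of $r$ the integrations by parts consume; the hypothesis $l\le4$ suffices.
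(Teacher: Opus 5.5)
Your second inequality is fine and matches the paper: \(Q\mathcal{R}Q^{-1}\) has symbol \(r+U'^{1/2}[r,U'^{-1/2}]\), so its norm is \(1+O(\epsilon)\).

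The gap is in the positivity step of the first inequality. The square-root factorization needs symbol bounds on \(\sqrt{1-|r|^2}\), which the hypotheses do not give, as you note yourself. Your concrete fallback does not prove the lemma. From \(\|U'^{1/2}\mathcal{R}h\|\le\|U'^{1/2}h\|+O(\epsilon)\|\langle\eta/k\rangle^{-1}h\|\), squaring only gives
\[\|P\mathcal{R}g\|^2\le\|Pg\|^2+O(\epsilon)\|Pg\|\,\|Qg\|+O(\epsilon^2)\|Qg\|^2 .\]
The cross term pairs an order \(-1/2\) norm of \(g\) with an order \(-3/2\) one. It cannot be bounded by \(O(\epsilon)\|Qg\|^2\): split \(g\) between \(|\eta/k|\sim1\) and \(|\eta/k|\sim N\gg1\) to see the ratio blow up. Young's inequality then leaves only \((1+O(\epsilon))\|Pg\|^2+O(\epsilon)\|Qg\|^2\).

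That weaker form is not enough for the paper. The lemma is applied in \eqref{Pythagoras_P}, where \(|m|=|n|\), and to \(D_t(-\Delta_t)^{-1/2}A\). In both cases the resulting \(P\)-coefficients cancel exactly at \(\gamma=\frac23\) in Step 5, so an \(O(\epsilon)\|PTSD_tR\|^2\) or \(O(\epsilon)\|PTSA\|^2\) error has nothing to absorb it. The exact coefficient \(1\) in front of \(\|P\mathcal{N}f\|^2\) is the point of the lemma.

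The missing idea is that no square root is needed: put the nonnegative Fourier multiplier between the multiplication operators. The paper writes
\[\bar r\#U'\#r-U'=-U'^{1/2}\#(1-|r|^2)\#U'^{1/2}-[U',\bar r]\#r+U'^{1/2}\#\big[U'^{1/2},|r|^2\big].\]
Here:
\begin{itemize}
\item The first term is the symbol of \(-U'^{1/2}\op(1-|r|^2)U'^{1/2}\le0\), and it stays \(\le0\) after sandwiching by \(\op(\langle\eta/k\rangle^{-1/2})\).
\item The two commutators are \(O(\epsilon)\) and of order \(-1\), so after the outer \(\langle\eta/k\rangle^{-1/2}\) factors and \(Q^{-1}\)-normalization they are \(O(\epsilon)\) by Calder\'on--Vaillancourt.
\item \(|r|^2=\bar r r\) inherits the symbol bounds of \(r\), so there is no smoothness issue.
\end{itemize}
In your own setup, the same fix is to write \(\op\big((1-|r|^2)U'\langle\eta/k\rangle^{-1}\big)\) as \(U'^{1/2}\op\big((1-|r|^2)\langle\eta/k\rangle^{-1}\big)U'^{1/2}\) plus an \(O(\epsilon)\) remainder of order \(-2\).

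Alternatively, your commutator route can be rescued by expanding the square bilinearly instead of using the triangle inequality:
\[\|U'^{1/2}\mathcal{R}h\|^2=\|\mathcal{R}U'^{1/2}h\|^2+2\re\langle h,U'^{1/2}\mathcal{R}^*[U'^{1/2},\mathcal{R}]h\rangle+\|[U'^{1/2},\mathcal{R}]h\|^2 .\]
The first term is \(\le\|U'^{1/2}h\|^2\) by Plancherel. In the middle quadratic form the operator has an \(O(\epsilon)\) symbol of order \(-1\). Splitting that gain symmetrically, \(\langle\eta/k\rangle^{-1/2}\) on each side, gives \(O(\epsilon)\|\langle\eta/k\rangle^{-1}g\|^2\lesssim O(\epsilon)\|Qg\|^2\), which is exactly the required form.
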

	
	\begin{proof}
		We first prove the estimate involving \(P\). By the assumption that \(|n| \ne 0\), the inverse \(\mathcal{N}^{-1}\) is well-defined on the range of \(\mathcal{N}\). Hence, we have the following identity:
		\[\| P \mathcal{M} f \|_{L^2}^2 - \| P \mathcal{N} f\|_{L^2}^2 = \left\langle [(\mathcal{M} \mathcal{N} ^{-1})^* P^* P (\mathcal{M} \mathcal{N}^{-1}) - P^* P] \mathcal{N} f, \mathcal{N} f \right\rangle.\]
		Using the definition of \(P\), conjugating by \(V\), and observing that Fourier multipliers commute, we find that the symbol of the operator in brackets is given by
		\[\sigma (V^{-1} \mathcal{R}_{m,n} V) = \frac{1}{\gamma} \left\langle \frac{\eta}{k} \right\rangle^{-1/2} \# \left( \frac{\overline{m}}{\overline{n}} (\eta) \# U'(y) \# \frac{m}{n} (\eta) -U'(y) \right) \# \left\langle \frac{\eta}{k} \right\rangle^{-1/2},\]
		where, for simplicity, we have denoted \(\mathcal{R}_{m,n} = (\mathcal{M} \mathcal{N} ^{-1})^* P^* P (\mathcal{M} \mathcal{N}^{-1}) - P^* P\).
		
		The expression in parentheses admits the decomposition
		\begin{align*}
			\frac{\overline{m}}{\overline{n}} (\eta) \# U'(y) \# \frac{m}{n} (\eta) - U'(y) =& - U'(y)^{1/2} \# \left( 1 - \frac{|m|^2}{|n|^2} (\eta) \right) \# U'(y)^{1/2} \\
			&- \left[ U'(y), \frac{\overline{m}}{\overline{n}} (\eta) \right] \# \frac{m}{n} (\eta) + U'(y)^{1/2} \# \left[ U'(y)^{1/2}, \frac{|m|^2}{|n|^2} (\eta) \right].
		\end{align*}
		The first term on the right-hand side is self-adjoint and non-positive, since \(|m| \le |n|\). It therefore remains to estimate the last two terms. We denote their corresponding contributions to \(\mathcal{R}_{m,n}\) by \(\mathcal{R}_{m,n,1}\) and \(\mathcal{R}_{m,n,2}\), respectively. More precisely,
		\[\mathcal{R}_{m,n,1} \coloneqq - V \op \left( \frac{1}{\gamma} \left\langle \frac{\eta}{k} \right\rangle^{-1/2} \# \left[ U'(y), \frac{\overline{m}}{\overline{n}} (\eta) \right] \# \frac{m}{n} (\eta) \# \left\langle \frac{\eta}{k} \right\rangle^{-1/2} \right) V^{-1},\]
		\[\mathcal{R}_{m,n,2} \coloneqq V \op \left( \frac{1}{\gamma} \left\langle \frac{\eta}{k} \right\rangle^{-1/2} \# U'(y)^{1/2} \# \left[ U'(y)^{1/2}, \frac{|m|^2}{|n|^2} (\eta) \right] \# \left\langle \frac{\eta}{k} \right\rangle^{-1/2} \right) V^{-1}.\]
		
		We first consider \(\mathcal{R}_{m,n,1}\). To control this term by the \(Q\)-weighted norm, we estimate the normalized operator \((Q^{-1})^* \mathcal{R}_{m,n,1} Q^{-1}\), whose symbol is given by
		\[\sigma (V^{-1} (Q^{-1})^* \mathcal{R}_{m,n,1} Q^{-1} V) = - \frac{\beta}{\gamma} (U'(y))^{-1/2} \# \left\langle \frac{\eta}{k} \right\rangle^{1/2} \# \left[ U'(y), \frac{\overline{m}}{\overline{n}} (\eta) \right] \# \left\langle \frac{\eta}{k} \right\rangle^{1/2} \# \frac{m}{n} (\eta) \# (U'(y))^{-1/2}.\]
		Here we have used the fact that \(\frac{\overline{m}}{\overline{n}}\) and \(\left\langle \frac{\eta}{k} \right\rangle^{1/2}\) are Fourier multipliers and therefore commute. Since \(U'\) is bounded from above and below and \(\frac{\overline{m}}{\overline{n}}\) is uniformly bounded, it suffices to estimate the three factors in the middle. By the composition formula \eqref{composition_formula} and Taylor's theorem, we obtain
		\begin{align*}
			\left[ U'(y), \frac{\overline{m}}{\overline{n}} (\eta) \right] &= - \frac{1}{2\pi} \int_{\mathbb{R}} \int_{\mathbb{R}} \int_{0}^{1} e^{-iz\lambda} \lambda \partial_\eta \frac{\overline{m}}{\overline{n}} (\eta + \theta \lambda) U'(y+z) d \theta d \lambda dz \\
			&= \frac{i}{2\pi} \int_{\mathbb{R}} \int_{\mathbb{R}} \int_{0}^{1} e^{-iz\lambda} \langle z \rangle^{-2} (1 - \partial_\lambda^2) \langle \lambda \rangle^{-6} (1 - \partial_z^2)^3 \partial_\eta \frac{\overline{m}}{\overline{n}} (\eta + \theta \lambda) U''(y+z) d \theta d \lambda dz,
		\end{align*}
		where we have integrated by parts in the oscillatory variables \(z\) and \(\lambda\). The assumptions on the derivatives of \(\frac{m}{n}\) then imply
		\[\left| \partial_y^j \partial_\eta^l \left[ U'(y), \frac{\overline{m}}{\overline{n}} (\eta) \right] \right| \lesssim \epsilon \left\langle \frac{\eta}{k} \right\rangle^{-1 - l}, \qquad j \in \{0, 1, 2, 3\}, l \in \{0, 1\}.\]
		Applying the composition formula \eqref{composition_formula} once more to the surrounding factors and integrating by parts in the oscillatory variables, we have
		\begin{align*}
			\left| \left\langle \frac{\eta}{k} \right\rangle^{1/2} \# \left[ U'(y), \frac{\overline{m}}{\overline{n}} (\eta) \right] \# \left\langle \frac{\eta}{k} \right\rangle^{1/2} \right| &= \left| \frac{1}{2\pi} \int_{\mathbb{R}} \int_{\mathbb{R}} e^{-iz\lambda} \left\langle \frac{\eta + \lambda}{k} \right\rangle^{1/2} \left[ U'(y + z), \frac{\overline{m}}{\overline{n}} (\eta) \right] \left\langle \frac{\eta}{k} \right\rangle^{1/2} d \lambda dz \right| \\
			&\lesssim \int_{\mathbb{R}} \int_{\mathbb{R}} \langle z \rangle^{-2} \langle \lambda \rangle^{-2} \left\langle \frac{\eta}{k} \right\rangle^{1/2} \langle \lambda \rangle^{1/2} \epsilon \left\langle \frac{\eta}{k} \right\rangle^{-1} \left\langle \frac{\eta}{k} \right\rangle^{1/2} d \lambda dz \\
			&\lesssim \epsilon.
		\end{align*}
		The same estimate holds for the derivatives of the symbol with respect to \(y\) and \(\eta\). Thus, using the Calder\'on--Vaillancourt Theorem \ref{thm CV}, we obtain the desired operator bound for \(\mathcal{R}_{m,n,1}\),
		\[\| (Q^{-1})^* \mathcal{R}_{m,n,1} Q^{-1} \|_{L^2 \to L^2} \lesssim \epsilon.\]
		
		The term \(\mathcal{R}_{m,n,2}\) is estimated in the same way. Combining these estimates with the nonpositivity of the first term in the decomposition gives the desired estimate for \(P\).
		
		The estimate involving \(Q\) is simpler. Indeed, it suffices to prove
		\[\| Q \mathcal{M} \mathcal{N}^{-1} Q^{-1} \|_{L^2 \to L^2} \le 1 + O(\epsilon).\]
		Conjugating by the unitary operator \(V\), we compute the symbol of this operator,
		\[\sigma (V^{-1} Q \mathcal{M} \mathcal{N}^{-1} Q^{-1} V) = U'(y)^{1/2} \# \frac{m(\eta)}{n(\eta)} \# U'(y)^{-1/2} = \frac{m(\eta)}{n(\eta)} + U'(y)^{1/2} \left[ \frac{m(\eta)}{n(\eta)}, U'(y)^{-1/2} \right].\]
		The first term is a Fourier multiplier and satisfies
		\[\left\| \op \left( \frac{m}{n} \right) \right\|_{L^2 \to L^2} = \left\| \frac{m}{n} \right\|_{L^\infty} \le 1.\]
		Hence, it remains to estimate the commutator term. By the same argument as above, we have
		\[\left| \partial_y^j \partial_\eta^l \left[ \frac{m(\eta)}{n(\eta)}, U'(y)^{-1/2} \right] \right| \lesssim \epsilon, \qquad j, l \in \{0, 1\}.\]
		Thus, by the Calder\'on--Vaillancourt Theorem \ref{thm CV}, the lemma follows.
	\end{proof}
	
	\subsection{Auxiliary bounds for the cross term}
	
	We conclude this section with several auxiliary operator bounds needed to estimate the terms arising from differentiating the cross term in the basic energy.
	
	\begin{lemma}
		\label{lem P_Q_bound}
		The following operator bounds hold:
		\begin{equation}
			\label{P_Q_bound1}
			\| P^*P Q^{-1} \|_{L^2 \to L^2} \le (1 + O(\epsilon)) \frac{\sqrt{\beta U'_{\sup}}}{\gamma},
		\end{equation}
		\begin{equation}
			\label{P_Q_bound2}
			\| Q P^*P (U')^{-1} k^{-1} (- \Delta_t)^{1/2} Q^{-1} \|_{L^2 \to L^2} \le \frac{1}{\gamma} + O(\epsilon),
		\end{equation}
		and
		\begin{equation}
			\label{P_Q_bound3}
			\| (Q^{-1})^* k^{-1} (- \Delta_t)^{1/2} (U')^{-1} Q^*Q P^*P Q^{-1} \|_{L^2 \to L^2} \le \frac{1}{\gamma} + O(\epsilon).
		\end{equation}
	\end{lemma}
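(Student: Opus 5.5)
Each of the three bounds can be handled the same way as in Lemma \ref{lem P_Q_TS} and Lemma \ref{lem U'_weighted_bound}. We conjugate by the unitary operator \(V\), which turns \(P\), \(Q\), and \((-\Delta_t)^{\pm 1/2}\) into pseudodifferential operators with symbols in \(\eta\) and \(y\). We then compute the principal symbol of each composite operator. Its \(L^2\) norm is read off from the supremum of that principal symbol. The remainder is shown to have all derivatives needed in Theorem \ref{thm CV} bounded by \(O(\epsilon)\). Every remainder carries at least one derivative of \(U'\), so it gains the factor \(\epsilon\) after Taylor expansion and integration by parts in the oscillatory variables, exactly as in the proof of Lemma \ref{lem P_Q_TS}.

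For \eqref{P_Q_bound1}, after conjugation the symbol is
\[
\frac{1}{\gamma}\Big\langle \frac{\eta}{k}\Big\rangle^{-1/2}\# U'(y)\#\Big\langle \frac{\eta}{k}\Big\rangle^{-1/2}\#\Big\langle \frac{\eta}{k}\Big\rangle\#\beta^{1/2}U'(y)^{-1/2}.
\]
We move the multiplication factors next to each other. This uses the commutator \(r_P\) from \eqref{P^*P} and the commutator of \(U'^{-1/2}\) with Fourier multipliers, both of which are \(O(\epsilon)\) symbols of the right order. The principal symbol is then \(\frac{\sqrt\beta}{\gamma}U'(y)^{1/2}\), whose supremum is \(\frac{\sqrt{\beta U'_{\sup}}}{\gamma}\). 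Here the orders \(-1/2-1/2+1=0\) cancel exactly, so the Fourier-multiplier parts combine to \(1\).

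The principal symbol of \eqref{P_Q_bound2} is
\[
\frac{1}{\gamma}\,\mathrm{sgn}(k)^{-1}\,U'^{1/2}\cdot U'\langle\eta/k\rangle^{-1}\cdot U'^{-1}\langle\eta/k\rangle\cdot U'^{-1/2}=\frac{1}{\gamma}\cdot \mathrm{sgn}(k).
\]
This is a unimodular constant, so it has norm \(\frac1\gamma\). For \eqref{P_Q_bound3}, the symbol is \(\beta^{-1}U'k^2(k^2+\eta^2)^{-1}\cdot\gamma^{-1}U'|k|(k^2+\eta^2)^{-1/2}\), multiplied by \(\beta^{1/2}U'^{-1/2}\langle\eta/k\rangle\) on the right and by \(\beta^{1/2}U'^{-1/2}\langle\eta/k\rangle\,\langle\eta/k\rangle\,U'^{-1}\) on the left. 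The \(\beta\)'s, the \(U'\) powers, and the \(\langle\eta/k\rangle\) powers all cancel, and the principal symbol is again \(\pm\frac1\gamma\). Both remainders are compositions in which at least one factor has been commuted past a multiplication by a function of \(U'\). By the argument of Lemma \ref{lem U'_weighted_bound} and the commutator estimate for \([U',\cdot]\) from the proof of Lemma \ref{lem comparison}, they are bounded by \(O(\epsilon)\) together with their \(y,\eta\) derivatives up to first order.

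The main obstacle is order bookkeeping in the remainders. Several factors have positive order, for example \(\langle\eta/k\rangle\) and \((-\Delta_t)^{1/2}/k\). The commutator remainders must therefore be shown to decay by exactly enough powers of \(\langle\eta/k\rangle\), including after \(\partial_\eta\), to offset them. I would control this as in Lemma \ref{lem P_Q_TS}. Each commutator \([U'^{a},\langle\eta/k\rangle^{b}]\) is estimated by \(\epsilon\langle\eta/k\rangle^{b-1-l}\) using the integral form of Taylor's theorem and integration by parts with weights \(\langle z\rangle^{-2}\langle\lambda\rangle^{-N}\). Losses of the form \(\langle\lambda\rangle^{|b|}\) from Peetre's inequality are absorbed by choosing \(N\) large, which is possible because \(U''\in W^{n,\infty}\) with \(n\) large. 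Once every remainder symbol is of order \(0\) with an \(\epsilon\) prefactor, Theorem \ref{thm CV} gives the stated \(O(\epsilon)\) corrections.
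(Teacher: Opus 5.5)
Your proposal is correct and is essentially the paper's proof: after conjugating by $V$, each symbol splits into an exact principal part plus a remainder, where the principal part is $\sqrt{\beta U'(y)}/\gamma$ for \eqref{P_Q_bound1} and the constant $\sgn(k)/\gamma$ for \eqref{P_Q_bound2} and \eqref{P_Q_bound3}, the remainder is built from $r_P$ or from commutators of $U'$ with powers of $\langle \eta/k\rangle$, and it is shown to be $O(\epsilon)$ via the composition formula, integration by parts, and Calder\'on--Vaillancourt. One small simplification: in \eqref{P_Q_bound1} no commutator with $U'(y)^{-1/2}$ is needed, since $U'(y)\langle \eta/k\rangle^{-1}\#\langle \eta/k\rangle\#U'(y)^{-1/2}=U'(y)^{1/2}$ holds exactly, so $r_P$ is the only source of error there.
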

	
	\begin{proof}
		Using \eqref{P^*P}, we obtain
		\[\sigma (V^{-1} P^*P Q^{-1} V) = \frac{\sqrt{\beta U'(y)}}{\gamma} + \beta^{1/2} r_P(y, \eta) \# \left\langle \frac{\eta}{k} \right\rangle \# U'(y)^{-1/2},\]
		where \(r_P\) satisfies \eqref{r_P}. The symbol bounds for \(r_P\), together with the Calder\'on--Vaillancourt Theorem \ref{thm CV}, yield \eqref{P_Q_bound1}.
		
		We next prove \eqref{P_Q_bound2}. The symbol of the corresponding operator is
		\begin{align*}
			&\sigma (V^{-1} Q P^*P (U')^{-1} k^{-1} (- \Delta_t)^{1/2} Q^{-1} V)\\
			=&~ \frac{\sgn k}{\gamma} U'(y)^{1/2} \# \left\langle \frac{\eta}{k} \right\rangle^{-3/2} \# U'(y) \# \left\langle \frac{\eta}{k} \right\rangle^{-1/2} \# U'(y)^{-1} \# \left\langle \frac{\eta}{k} \right\rangle^2 \# U'(y)^{-1/2} \\
			=& ~\frac{\sgn k}{\gamma} + \frac{\sgn k}{\gamma} U'(y)^{1/2} \# \left\langle \frac{\eta}{k} \right\rangle^{-3/2} \\
			&~\# \left[ U'(y), \left\langle \frac{\eta}{k} \right\rangle^{-1/2} \right] \# U'(y)^{-1} \# \left\langle \frac{\eta}{k} \right\rangle^2 \# U'(y)^{-1/2}.
		\end{align*}
		The first term has \(L^2\)-operator norm \(\gamma^{-1}\). Arguing as in the proof of Lemma \ref{lem comparison}, we find that the remaining term has \(L^2\)-operator norm bounded by \(O(\epsilon)\). This proves \eqref{P_Q_bound2}.
		
		Finally, \eqref{P_Q_bound3} follows from the same symbolic argument used to prove \eqref{P_Q_bound2}.
	\end{proof}
	
	To estimate the contribution arising when the time derivative falls on \(P^*P\), we establish the following operator bound.
	
	\begin{lemma}
		\label{lem partial_tP^*P}
		The following operator bound holds:
		\[\| (Q^{-1})^*\partial_t (P^*P) Q^{-1} \|_{L^2 \to L^2} \le \left (\frac{\beta}{\gamma} + O(\epsilon) \right) U'_{\sup}.\]
	\end{lemma}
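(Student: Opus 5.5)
Since \(V^{-1} P^*P V\) has symbol \(\gamma^{-1}\langle \eta/k\rangle^{-1/2} \# U'(y) \# \langle \eta/k\rangle^{-1/2}\) and all the \(t\)-dependence sits in the conjugation by \(V = e^{ikUt}\), I will write
\[\partial_t (P^*P) = ik [U, P^*P] = ik V \big[U, \op(\sigma(V^{-1}P^*PV))\big] V^{-1}.\]
Because \(U\) commutes with \(U'\), the bracket splits into two commutators of \(U\) with the Fourier multiplier \(\langle \eta/k\rangle^{-1/2}\):
\[\tfrac{ik}{\gamma}\Big( [U,\langle \tfrac{\eta}{k}\rangle^{-1/2}] \# U' \# \langle \tfrac{\eta}{k}\rangle^{-1/2} + \langle \tfrac{\eta}{k}\rangle^{-1/2} \# U' \# [U,\langle \tfrac{\eta}{k}\rangle^{-1/2}] \Big).\]
I will treat each commutator as in the computation of \(\partial_t(TS)\) in Lemma \ref{lem P_Q_TS}. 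Taylor-expand \(\langle (\eta+\lambda)/k\rangle^{-1/2}\) to first order with integral remainder inside the composition formula \eqref{composition_formula}. The linear term gives the exact principal part
\[ik[U,\langle \eta/k\rangle^{-1/2}] = -U'(y)\,\tfrac{\eta}{2k}\langle \eta/k\rangle^{-5/2}\cdot\tfrac{1}{k}\cdot k = -\tfrac{U'\eta}{2k}\langle \eta/k\rangle^{-5/2}.\]
After two integrations by parts in \(z\), the quadratic remainder carries \(U''\), hence a factor \(\epsilon\) and symbol decay \(\langle \eta/k\rangle^{-5/2}\) (one order better, as in \eqref{r_P}).

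The principal symbol of \(\partial_t(P^*P)\) is therefore
\[-\tfrac{1}{\gamma}\,U'(y)^2\,\tfrac{\eta}{k}\,\langle \eta/k\rangle^{-3}.\]
All the remaining pieces are \(O(\epsilon)\langle \eta/k\rangle^{-2}\) with matching derivative bounds. These come from the Taylor remainders and from moving \(U'\) past the multipliers; the latter commutators contain \(U''\) and are handled as in Lemma \ref{lem comparison}.

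Next I normalize by \(Q^{-1}\) on both sides. The symbol of \(V^{-1}(Q^{-1})^*\,\cdot\,Q^{-1}V\) is \(\beta\,U'^{-1/2}\#\langle\eta/k\rangle\#(\cdot)\#\langle\eta/k\rangle\#U'^{-1/2}\), so the principal part becomes
\[-\tfrac{\beta}{\gamma}\,U'(y)\,\tfrac{\eta/k}{\langle \eta/k\rangle}\]
plus \(O(\epsilon)\) errors. These errors come from commuting \(U'^{\pm1/2}\) and \(U'\) through \(\langle\eta/k\rangle^{\pm}\) and from the \(\epsilon\langle\eta/k\rangle^{-2}\) remainder multiplied by \(\langle\eta/k\rangle^{2}\). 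They are bounded, together with their \(y,\eta\) derivatives up to first order, by oscillatory integration by parts exactly as in Lemmas \ref{lem partial_t_Delta} and \ref{lem P_Q_bound}. The Calder\'on--Vaillancourt Theorem \ref{thm CV} then gives an \(O(\epsilon)\) operator bound for them.

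For the principal part I write the operator as \(U'^{1/2}\,\op(\tfrac{\eta/k}{\langle\eta/k\rangle})\,U'^{1/2}\) up to further \(O(\epsilon)\) commutators. Its norm is at most
\[\|U'^{1/2}\|_{L^\infty}^2\,\sup_\eta \tfrac{|\eta/k|}{\langle\eta/k\rangle} \le U'_{\sup}.\]
This yields \(\big(\tfrac{\beta}{\gamma}+O(\epsilon)\big)U'_{\sup}\). To absorb the additive \(O(\epsilon)\) into the stated form I use \(U'_{\sup}\ge U'_{\inf}>0\).

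The main obstacle is keeping the sharp constant \(\beta U'_{\sup}/\gamma\) rather than a multiple of it. This requires factoring \(U'\) symmetrically before taking the norm, so that the multiplier \(\eta/\sqrt{k^2+\eta^2}\), of norm exactly \(1\), is isolated. It also requires verifying that every commutator generated in doing so genuinely involves \(U''\) and has enough \(\langle\eta/k\rangle\)-decay to survive multiplication by \(\langle\eta/k\rangle^2\).
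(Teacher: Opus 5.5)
Your argument is correct and essentially matches the paper's: you write \(\partial_t(P^*P)=ikV[U,\op(\cdot)]V^{-1}\), extract the principal symbol \(\frac{1}{\gamma}U'^2\frac{\eta}{k}\langle\frac{\eta}{k}\rangle^{-3}\) from the first-order Taylor term, make all remainders \(O(\epsilon)\) after \(Q^{-1}\)-normalization via integration by parts and Calder\'on--Vaillancourt, and bound the normalized principal part \(\frac{\beta}{\gamma}U'(y)\frac{\eta/k}{\langle\eta/k\rangle}\) (a multiplication operator times a Fourier multiplier) by \(\frac{\beta}{\gamma}U'_{\sup}\); your Leibniz splitting using \([U,U']=0\), instead of starting from \eqref{P^*P} with \(r_P\), is an inessential variation. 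Your sign is flipped (\(ik\cdot iU'm'\) with \(m=\langle\eta/k\rangle^{-1/2}\) gives \(+\frac{U'\eta}{2k}\langle\eta/k\rangle^{-5/2}\)), which is harmless for a norm bound.
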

	
	\begin{proof}
		Recalling the decomposition of \(P^*P\) in \eqref{P^*P}, we have
		\[\partial_t (P^*P) = \partial_t \left( V \op \left( \frac{1}{\gamma} U'(y) \left\langle \frac{\eta}{k} \right\rangle^{-1} + r_P (y, \eta) \right) V^{-1} \right) = ik V \left[ U, \op \left( \frac{1}{\gamma} U'(y) \left\langle \frac{\eta}{k} \right\rangle^{-1} + r_P (y, \eta) \right) \right] V^{-1}.\]
		By the composition formula \eqref{composition_formula} and Taylor's theorem,
		\begin{align*}
			\left[ U(y), \frac{1}{\gamma} U'(y) \left\langle \frac{\eta}{k} \right\rangle^{-1} + r_P (y, \eta) \right] =& - \frac{iU'(y)^2}{\gamma} \frac{\eta}{k^2} \left\langle \frac{\eta}{k} \right\rangle^{-3} \\
			&- \frac{U'(y)}{2 \pi} \int_{\mathbb{R}} \int_{\mathbb{R}} \int_{0}^{1} e^{-i z \lambda} \lambda^2 (1 - \theta) \partial_\eta^2 \left( \left\langle \frac{\eta + \theta \lambda}{k} \right\rangle^{-1} \right) U(y + z) d \theta d \lambda dz \\
			&- \frac{1}{2\pi} \int_{\mathbb{R}} \int_{\mathbb{R}} \int_{0}^{1} e^{-iz\lambda} \lambda \partial_\eta r_P (y, \eta + \theta \lambda) U(y+z) d \theta d \lambda dz.
		\end{align*}
		By integration by parts in the oscillatory variables and the bounds for \(r_P\) in \eqref{r_P}, we find that, after multiplication by \(ik\) and conjugation by \(Q^{-1}\), the two non-principal terms, together with all their derivatives up to the order required by Theorem \ref{thm CV}, are bounded by \(O(\epsilon)\). It therefore remains to consider the principal contribution. Its symbol after conjugation by \(Q^{-1}\) is
		\[\sigma \left( V^{-1} (Q^{-1})^* ik \op \left( - \frac{iU'(y)^2}{\gamma} \frac{\eta}{k^2} \left\langle \frac{\eta}{k} \right\rangle^{-3} \right) Q^{-1} V \right) = \beta U'(y)^{-1/2} \# \left\langle \frac{\eta}{k} \right\rangle \# \frac{U'(y)^2}{\gamma} \# \frac{\eta}{k} \left\langle \frac{\eta}{k} \right\rangle^{-2} \# U'(y)^{-1/2}.\]
		By the asymptotic expansion \eqref{composition_expansion}, the principal symbol of this expression is \(\frac{\beta}{\gamma} U'(y) \frac{\eta}{k} \langle \frac{\eta}{k} \rangle^{-1}\), whose \(L^\infty\) norm is bounded by \(\frac{\beta}{\gamma} U'_{\sup}\). The remaining terms in the composition satisfy the same bound up to a factor of \(O(\epsilon)\). The Calder\'on--Vaillancourt Theorem \ref{thm CV} yields the result.
	\end{proof}
	
	\section{Proof of the basic energy estimate}
	\label{sec5}
	
	We now use the lemmas established above to prove the basic energy estimate stated in Proposition \ref{prop basic_energy}.
	
	\begin{proof}[Proof of Proposition \ref{prop basic_energy}]
		We divide the proof into five steps. 
		
		\noindent\textbf{Step 1. Equivalence of the energy functional.} We first prove the equivalence of the energy functional, namely \eqref{equivalent_energy}. Since \(T\) and \(S\) are unitarily conjugated Fourier multipliers, they commute with \(D_t\) and \(\Delta_t\). Hence,
		\[\| TSkR \|_{L^2}^2 + \| TSD_tR \|_{L^2}^2 = \| TS \sqrt{- \Delta_t} R \|_{L^2}^2.\]
		
		We next estimate the cross term in \(E\). Recalling that \(\mathcal{R}_P=V\op(r_P)V^{-1}\), the decomposition of \(P^*P\) in \eqref{P^*P} gives
		\begin{align*}
			\frac{1}{3} \re \langle PTSA, PTSR \rangle =& ~\frac{1}{3} \re \left\langle \left( \frac{1}{\gamma} U' |k| (- \Delta_t)^{-1/2} + \mathcal{R}_P \right) TSA, TSR \right\rangle \\
			=& ~\frac{1}{2} \re \langle TSA, |k| (- \Delta_t)^{-1/2} U' TSR \rangle + \frac{1}{3} \re \langle \mathcal{R}_P TSA, TSR \rangle \\
			=& ~\frac{1}{2} \re \langle TSA, TS |k| (- \Delta_t)^{-1/2} U' R \rangle + \frac{1}{2} \re \langle TSA, |k| (- \Delta_t)^{-1/2} [U', TS] R \rangle \\
			&+ \frac{1}{3} \re \langle \mathcal{R}_P TSA, TSR \rangle,
		\end{align*}
		where we used \(\gamma = \frac{2}{3}\). Recall that
		\begin{equation}
			\label{U'R}
			k (- \Delta_t)^{-1/2} U'R = k (- \Delta_t)^{-1/2} \Theta - \Xi.
		\end{equation}
		Combining \eqref{U'R} with the bound \(\| k (- \Delta_t)^{-1/2} \|_{L^2 \to L^2} \le 1\), we obtain the following estimate for the principal term,
		\[\left| \frac{1}{2} \re \langle TSA, TS |k| (- \Delta_t)^{-1/2} U' R \rangle \right| \le \frac{1}{2} \| TSA \|_{L^2} \| TS \Theta \|_{L^2} + \frac{1}{2} \| TSA \|_{L^2} \| TS \Xi \|_{L^2}.\]
		Applying Young's inequality, 
		\[\left| \frac{1}{2} \re \langle TSA, TS |k| (- \Delta_t)^{-1/2} U' R \rangle \right| \le \frac{1}{3} \| TSA \|_{L^2}^2 + \frac{3}{8} \| TS \Theta \|_{L^2}^2 + \frac{3}{8} \| TS \Xi \|_{L^2}^2.\]
		
		For the remaining two terms, Lemma \ref{lem TS_commutator}, the estimate \eqref{r_P} and the Calder\'on--Vaillancourt Theorem \ref{thm CV} give
		\[\left| \frac{1}{2} \re \langle TSA, |k| (- \Delta_t)^{-1/2} [U', TS] R \rangle + \frac{1}{3} \re \langle \mathcal{R}_P TSA, TSR \rangle \right| \lesssim \epsilon \| TSA \|_{L^2}^2 + \frac{\epsilon}{M^2} \| TSkR \|_{L^2}^2,\]
		where the factor \(M^{-2}\) is obtained at the expense of the \(M\)-dependent implicit constant in \(\lesssim\). Taking \(\epsilon = \epsilon(U'_{\inf}, U'_{\sup}, M)\) sufficiently small, the two terms are controlled by the positive part of \(E\). Hence, we obtain the equivalence \eqref{equivalent_energy}.
		
		\noindent\textbf{Step 2. The differentiated energy identity and the main dissipation.} To prove \eqref{derivative}, we differentiate the energy functional with respect to time. Using the equations \eqref{R_A_Theta}, we obtain
		\begin{equation}
			\label{dE/dt}
			\begin{aligned}
				\frac{d}{dt} E =& ~\frac{1}{M^2} \re \langle \partial_t (TS) kR, TSkR \rangle + \frac{1}{M^2} \re \langle \partial_t (TS)D_tR, TSD_tR \rangle + \re \langle \partial_t (TS) A, TSA \rangle \\
				&+ \re \langle \partial_t (TS) \Theta, TS \Theta \rangle + \re \langle \partial_t(TS) \Xi, TS \Xi \rangle - \frac{1}{3} \re \langle PTSA, PTSA \rangle \\
				&- \frac{1}{M^2} \re \langle TS ikU'R, TSD_tR \rangle - \re \langle TS 2ikU'D_t \Delta_t^{-1} A, TSA \rangle \\
				&- \re \langle TS 2k U' (- \Delta_t)^{-1/2} \Xi, TSA \rangle - \re \langle TS 2k^2U' \Delta_t^{-1} U'' D_t \Delta_t^{-1} R, TSA \rangle \\
				&+ \re \langle TS ikU'' \Delta_t^{-1} \Theta, TS \Theta \rangle + \re \langle TS 2ik^3 U'' \Delta_t^{-1} U' \Delta_t^{-1} R, TS \Theta \rangle \\
				&+ \re \langle TS [\partial_t (- \Delta_t)^{-1/2}] (- \Delta_t)^{1/2} \Xi, TS \Xi \rangle + \re \langle TS k (- \Delta_t)^{-1/2} U' A, TS \Xi \rangle \\
				&+ \re \langle TS k (- \Delta_t)^{-1/2} ik U'' \Delta_t^{-1} \Theta, TS \Xi \rangle + \re \langle TS k (- \Delta_t)^{-1/2} 2 ik^3 U'' \Delta_t^{-1} U' \Delta_t^{-1} R, TS \Xi \rangle \\
				&+ \frac{1}{3} \re \langle P \partial_t (TS) A, PTSR \rangle + \frac{1}{3} \re \langle PTSA, P \partial_t (TS) R \rangle + \frac{1}{3} \re \langle \partial_t (P^*P) TSA, TSR \rangle \\
				&- \frac{1}{3M^2} \re \langle PTS \Delta_t R, PTSR \rangle - \frac{1}{3} \re \langle PTS 2ikU'D_t\Delta_t^{-1} A, PTSR \rangle \\
				&- \frac{1}{3} \re \langle PTS 2k U' (- \Delta_t)^{-1/2} \Xi, PTSR \rangle - \frac{1}{3} \re \langle PTS 2k^2U' \Delta_t^{-1} U'' D_t \Delta_t^{-1} R, PTSR \rangle.
			\end{aligned}
		\end{equation}
		Here we have used the evolution identity
		\[\partial_t \Xi = [\partial_t (-\Delta_t)^{-1/2}] (-\Delta_t)^{1/2} \Xi + k (-\Delta_t)^{-1/2} (U'A + ikU'' \Delta_t^{-1} \Theta + 2ik^3U'' \Delta_t^{-1} U' \Delta_t^{-1} R),\]
		which follows from the definition of \(\Xi\). We have also used the identity
		\[- \re \langle TS \Delta_t R, TSA \rangle = \re \langle TSkA, TSkR \rangle + \re \langle TSD_tA, TSD_tR \rangle,\]
		since \(T\) and \(S\) both commute with \(D_t\) and \(\Delta_t\).
		
		We first isolate the terms in which the time derivative falls on the weights \(TS\) in the positive part of the energy. By Lemma \ref{lem P_Q_TS}, these terms provide the main negative contribution.
		
		\noindent\textbf{Step 3. Remaining contributions from the non-cross part of the energy.} We decompose
		\[- \frac{1}{M^2} \re \langle TS ikU'R, TSD_tR \rangle = - \frac{1}{M^2} \re \langle ikU' TSR, TSD_tR \rangle + \frac{1}{M^2} \re \langle ik [U', TS] R, TSD_tR \rangle.\]
		For the first term, Corollary \ref{cor operator_bounds} gives
		\begin{align*}
			- \frac{1}{M^2} \re \langle ikU' TSR, TSD_tR \rangle &= - \frac{1}{M^2} \re \langle ikU' (- \Delta_t)^{-1/2} TS \sqrt{- \Delta_t} R, TSD_tR \rangle \\
			&\le \frac{\gamma}{M^2} \| P TS \sqrt{- \Delta_t} R \|_{L^2} \| P TSD_tR \|_{L^2} + \frac{O(\epsilon)}{M^2} \| Q TS \sqrt{- \Delta_t} R \|_{L^2} \| Q TSD_tR \|_{L^2}.
		\end{align*}
		Since
		\[| \sigma (V^{-1} \sqrt{- \Delta_t} V) | = | \sigma (V^{-1} (k \pm D_t) V) | = \sqrt{k^2 + \eta^2} > 0,\]
		Lemma \ref{lem comparison} yields
		\[\| PTS \sqrt{- \Delta_t} R \|_{L^2}^2 \le \| PTS (k \pm D_t) R \|_{L^2}^2 + O(\epsilon) \| QTS (k \pm D_t) R \|_{L^2}^2.\]
		Adding the estimates corresponding to \(+\) and \(-\), and using the parallelogram identity, we obtain
		\begin{equation}
			\label{Pythagoras_P}
			\begin{aligned}
				2 \| PTS \sqrt{- \Delta_t} R \|_{L^2}^2 \le& (\| PTS (k + D_t) R \|_{L^2}^2 + \| PTS (k - D_t) R \|_{L^2}^2) \\
				&+ O(\epsilon) (\| QTS (k + D_t) R \|_{L^2}^2 + \| QTS (k - D_t) R \|_{L^2}^2)\\
				=& 2 (\| PTSkR \|_{L^2}^2 + \| PTSD_tR \|_{L^2}^2) + 2 O(\epsilon) (\| QTSkR \|_{L^2}^2 + \| QTSD_tR \|_{L^2}^2).
			\end{aligned}
		\end{equation}
		Similarly,
		\begin{equation}
			\label{Pythagoras_Q}
			\| QTS \sqrt{- \Delta_t} R \|_{L^2}^2 \le (1 + O(\epsilon)) (\| QTSkR \|_{L^2}^2 + \| QTSD_tR \|_{L^2}^2).
		\end{equation}
		Using Young's inequality in the preceding bilinear estimate, we obtain
		\begin{align*}
			- \frac{1}{M^2} \re \langle ikU' TSR, TSD_tR \rangle \le& \frac{\gamma}{2M^2} (\| PTSkR \|_{L^2}^2 + \| PTSD_tR \|_{L^2}^2 + O(\epsilon) \| QTSkR \|_{L^2}^2 + O(\epsilon) \| QTSD_tR \|_{L^2}^2) \\
			&+ \frac{\gamma}{2M^2} \| PTSD_tR \|_{L^2}^2 + \frac{O(\epsilon)}{M^2} \| QTSkR \|_{L^2}^2 + \frac{O(\epsilon)}{M^2} \| QTSD_tR \|_{L^2}^2 \\
			\le& \frac{\gamma}{2M^2} \| PTSkR \|_{L^2}^2 + \frac{\gamma}{M^2} \| PTSD_tR \|_{L^2}^2 + \frac{O(\epsilon)}{M^2} \| QTSkR \|_{L^2}^2 + \frac{O(\epsilon)}{M^2} \| QTSD_tR \|_{L^2}^2.
		\end{align*}
		For the commutator term, the Cauchy--Schwarz inequality and Corollary \ref{cor TS_commutator_bound} give
		\begin{align*}
			\frac{1}{M^2} \re \langle ik [U', TS] R, TSD_tR \rangle &\le \frac{1}{M^2} \| (Q^*)^{-1} [U', TS] (TS)^{-1} k (- \Delta_t)^{-1/2} Q^{-1} \|_{L^2 \to L^2} \| QTS \sqrt{- \Delta_t} R \|_{L^2} \| QTSD_tR \|_{L^2} \\
			&\lesssim \frac{\epsilon}{M^2} \| QTS \sqrt{- \Delta_t} R \|_{L^2} \| QTSD_tR \|_{L^2}.
		\end{align*}
		It follows from \eqref{Pythagoras_Q} and Young's inequality that
		\[\frac{1}{M^2} \re \langle ik [U', TS] R, TSD_tR \rangle \lesssim \frac{\epsilon}{M^2} \| QTSkR \|_{L^2}^2 + \frac{\epsilon}{M^2} \| QTSD_tR \|_{L^2}^2.\]
		Combining the preceding estimates, we conclude that
		\[- \frac{1}{M^2} \re \langle TSikU'R, TSD_tR \rangle \le \frac{\gamma}{2M^2} \| PTSkR \|_{L^2}^2 + \frac{\gamma}{M^2} \| PTSD_tR \|_{L^2}^2 + \frac{O(\epsilon)}{M^2} \| QTSkR \|_{L^2}^2 + \frac{O(\epsilon)}{M^2} \| QTSD_tR \|_{L^2}^2.\]
		
		The next term can be treated similarly but more directly. We decompose
		\[- \re \langle TS 2ikU'D_t\Delta_t^{-1} A, TSA \rangle = - \re \langle 2ik U' TS D_t\Delta_t^{-1} A, TSA \rangle + \re \langle 2ik [U', TS] D_t\Delta_t^{-1} A, TSA \rangle.\]
		By Corollary \ref{cor operator_bounds}, the first term is bounded by
		\begin{align*}
			- \re \langle 2ik U' TS D_t\Delta_t^{-1} A, TSA \rangle &= \re \langle 2ik U' (- \Delta_t)^{-1/2} TS D_t (- \Delta_t)^{-1/2} A, TSA \rangle \\
			&\le 2 \gamma \| PTS D_t (- \Delta_t)^{-1/2} A \|_{L^2} \| PTSA \|_{L^2} + O(\epsilon) \| QTS D_t (- \Delta_t)^{-1/2} A \|_{L^2} \| QTSA \|_{L^2}.
		\end{align*}
		The symbol of \(D_t (-\Delta_t)^{-1/2}\) satisfies
		\[|\sigma (V^{-1} D_t (- \Delta_t)^{-1/2} V)| = \frac{|\eta|}{\sqrt{k^2 + \eta^2}} \le 1,\]
		and its derivatives satisfy the assumptions of Lemma \ref{lem comparison}. Therefore,
		\[\| PTS D_t (- \Delta_t)^{-1/2} A \|_{L^2}^2 \le \| PTSA \|_{L^2}^2 + O(\epsilon) \| QTSA \|_{L^2}^2,\]
		and
		\[\| QTS D_t (- \Delta_t)^{-1/2} A \|_{L^2} \le (1 + O(\epsilon)) \| QTSA \|_{L^2}.\]
		Thus, Young's inequality yields
		\[- \re \langle 2ik U' TS D_t\Delta_t^{-1} A, TSA \rangle \le 2\gamma \| PTSA \|_{L^2}^2 + O(\epsilon) \| QTSA \|_{L^2}^2.\]
		On the other hand, Corollary \ref{cor TS_commutator_bound} and the Cauchy--Schwarz inequality give
		\[\re \langle 2ik [U', TS] D_t\Delta_t^{-1} A, TSA \rangle \lesssim \epsilon \| QTS D_t (- \Delta_t)^{-1/2} A \|_{L^2} \| QTSA \|_{L^2} \lesssim \epsilon \| QTSA \|_{L^2}^2.\]
		Consequently,
		\[- \re \langle TS 2ikU' D_t \Delta_t^{-1} A, TSA \rangle \le 2\gamma \| PTSA \|_{L^2}^2 + O(\epsilon) \| QTSA \|_{L^2}^2.\]
		
		We next decompose
		\[- \re \langle TS 2kU' (- \Delta_t)^{-1/2} \Xi, TSA \rangle = \re \langle 2k^2 U' \Delta_t^{-1} TS k^{-1} (- \Delta_t)^{1/2} \Xi, TSA \rangle + \re \langle 2k [U', TS] (- \Delta_t)^{-1/2} \Xi, TSA \rangle.\]
		Corollary \ref{cor operator_bounds} gives the following bound for the principal term,
		\[\re \langle 2k^2 U' \Delta_t^{-1} TS k^{-1} (- \Delta_t)^{1/2} \Xi, TSA \rangle \le 2 (\beta + O(\epsilon)) \| QTS k^{-1} (- \Delta_t)^{1/2} \Xi \|_{L^2} \| QTSA \|_{L^2}.\]
		Using the identity
		\[k^{-1} (- \Delta_t)^{1/2} \Xi = \Theta - U'R,\]
		together with Lemma \ref{lem U'_weighted_bound}, we have
		\begin{equation}
			\label{QTS_Xi_bound}
			\begin{aligned}
				\| QTS k^{-1} (- \Delta_t)^{1/2} \Xi \|_{L^2} &\le \| QTS \Theta \|_{L^2} + (1 + O(\epsilon)) U'_{\sup} \| QTSkR \|_{L^2} \\
				&\le \| QTS \Theta \|_{L^2} + (MU'_{\sup} + O(\epsilon)) \frac{1}{M} \| QTSkR \|_{L^2}.
			\end{aligned}
		\end{equation}
		Moreover, for the remaining commutator, Corollary \ref{cor TS_commutator_bound} implies
		\[\re \langle 2k [U', TS] (- \Delta_t)^{-1/2} \Xi, TSA \rangle \lesssim \epsilon \| Q TS \Xi \|_{L^2} \| QTSA \|_{L^2}.\]
		Therefore, Young's inequality yields
		\begin{align*}
			- \re \langle TS 2kU' (- \Delta_t)^{-1/2} \Xi, TSA \rangle \le& (\beta + O(\epsilon)) \| QTS \Theta \|_{L^2}^2 + (\beta MU'_{\sup} + O(\epsilon)) \frac{1}{M^2} \| QTSkR \|_{L^2}^2 \\
			&+ (\beta (1 + MU'_{\sup}) + O(\epsilon)) \| QTSA \|_{L^2}^2 + O(\epsilon) \| Q TS \Xi \|_{L^2}^2.
		\end{align*}
		
		The subsequent term is treated similarly. Replacing \(\Theta-U'R\) with \(U''D_t\Delta_t^{-1}R\) in the preceding argument and applying Lemmas \ref{lem U'_weighted_bound} and \ref{lem comparison}, we obtain
		\begin{align*}
			- \re \langle TS 2k^2U' \Delta_t^{-1} U'' D_t \Delta_t^{-1} R, TSA \rangle &\le 2 (\beta + O(\epsilon)) \| QTS U'' D_t \Delta_t^{-1} R \|_{L^2} \| QTSA \|_{L^2} \\
			&\lesssim \frac{\epsilon}{M} \| QTS D_t \Delta_t^{-1} R \|_{L^2} \| QTSA \|_{L^2} \\
			&\lesssim \frac{\epsilon}{M^2} \| QTSkR \|_{L^2}^2 + \epsilon \| QTSA \|_{L^2}^2.
		\end{align*}
		In the second inequality, Lemma \ref{lem U'_weighted_bound} provides the factor \(\epsilon\). We have also used the allowed \(M\)-dependence of \(\lesssim\) to extract the factor \(M^{-1}\).
		
		We estimate the next two terms. By the composition formula \eqref{composition_formula}, Lemma \ref{lem TS_commutator}, and the Calder\'on--Vaillancourt Theorem \ref{thm CV}, the same symbolic argument as in the proof of Corollary \ref{cor TS_commutator_bound} gives
		\[\| (Q^*)^{-1} TS U'' (TS)^{-1} k^2 (-\Delta_t)^{-1} Q^{-1} \|_{L^2 \to L^2} \lesssim \epsilon.\]
		Consequently, we obtain
		\[\re \langle TSikU'' \Delta_t^{-1} \Theta, TS \Theta \rangle \lesssim \epsilon \| QTS \Theta \|_{L^2}^2,\]
		and
		\begin{align*}
			\re \langle TS 2ik^3 U'' \Delta_t^{-1} U' \Delta_t^{-1} R, TS \Theta\rangle \lesssim \epsilon \| QTS k U' \Delta_t^{-1} R \|_{L^2} \| QTS \Theta \|_{L^2} \lesssim \frac{\epsilon}{M^2} \| QTSkR \|_{L^2}^2 + \epsilon \| QTS \Theta \|_{L^2}^2.
		\end{align*}
		Here, the second estimate additionally requires Lemma \ref{lem U'_weighted_bound} and Lemma \ref{lem comparison}.
		
		For the term involving \(\partial_t(-\Delta_t)^{-1/2}\), Lemma \ref{lem partial_t_Delta} yields
		\[\re \langle TS [\partial_t (- \Delta_t)^{-1/2}] (- \Delta_t)^{1/2} \Xi, TS \Xi \rangle \le (\beta + O(\epsilon)) \| QTS k^{-1} (- \Delta_t)^{1/2} \Xi \|_{L^2} \| QTS \Xi \|_{L^2}.\]
		The first factor is controlled by \eqref{QTS_Xi_bound}. Hence, Young's inequality gives
		\begin{align*}
			\re \langle TS [\partial_t (- \Delta_t)^{-1/2}] (- \Delta_t)^{1/2} \Xi, TS \Xi \rangle \le& \left( \frac{\beta}{2} + O(\epsilon) \right) \| QTS \Theta \|_{L^2}^2 + \left( \frac{\beta MU'_{\sup}}{2} + O(\epsilon) \right) \frac{1}{M^2} \| QTSkR \|_{L^2}^2 \\
			&+ \left( \frac{\beta (1 + MU'_{\sup})}{2} + O(\epsilon) \right) \| QTS \Xi \|_{L^2}^2.
		\end{align*}
		
		The remaining three contributions arising from the \(\Xi\)-energy are handled by the same argument used for the term \(- \re \langle TS 2kU' (- \Delta_t)^{-1/2} \Xi, TSA \rangle\). More precisely,
		\begin{align*}
			\re \langle TS k (- \Delta_t)^{-1/2} U' A, TS \Xi \rangle \le& \left( \frac{\beta}{2} + O(\epsilon) \right) \| QTS \Theta \|_{L^2}^2 + \left( \frac{\beta MU'_{\sup}}{2} + O(\epsilon) \right) \frac{1}{M^2} \| QTSkR \|_{L^2}^2 \\
			&+ \left( \frac{\beta (1 + MU'_{\sup})}{2} + O(\epsilon) \right) \| QTSA \|_{L^2}^2 + O(\epsilon) \| QTS \Xi \|_{L^2}^2,
		\end{align*}
		\[\re \langle TS k (- \Delta_t)^{-1/2} ik U'' \Delta_t^{-1} \Theta, TS \Xi \rangle \lesssim \epsilon \| QTS \Theta \|_{L^2}^2 + \frac{\epsilon}{M^2} \| QTSkR \|_{L^2}^2 + \epsilon \| QTS \Xi \|_{L^2}^2,\]
		\[\re \langle TS k (- \Delta_t)^{-1/2} 2 ik^3 U'' \Delta_t^{-1} U' \Delta_t^{-1} R, TS \Xi \rangle \lesssim \epsilon \| QTS \Theta \|_{L^2}^2 + \frac{\epsilon}{M^2} \| QTSkR \|_{L^2}^2 + \epsilon \| QTS \Xi \|_{L^2}^2.\]
		
		\noindent\textbf{Step 4. Contributions from the cross term.} We now consider the contributions from the cross term, beginning with \(\frac{1}{3} \re\langle P \partial_t(TS) A, PTSR \rangle\). By the decomposition of \([\partial_t (TS)] (TS)^{-1}\) established in the proof of Lemma \ref{lem P_Q_TS},
		\[\re \langle P \partial_t (TS) A, PTSR \rangle \le \| P^*P TSA \|_{L^2} \| P^*P TSR \|_{L^2} + (1 + O(\epsilon)) \| Q TSA \|_{L^2} \| Q P^*P TS R \|_{L^2}.\]
		Applying estimate \eqref{P_Q_bound1} in Lemma \ref{lem P_Q_bound} to both factors, we obtain
		\[\| P^*P TSA \|_{L^2} \| P^*P TSR \|_{L^2} \le (1 + O(\epsilon))\frac{\beta MU'_{\sup}}{\gamma^2} \frac{1}{M} \| QTSA \|_{L^2} \| QTSkR \|_{L^2}.\]
		To control the second term, we invoke estimate \eqref{P_Q_bound2} in the same lemma, which yields
		\[\| Q P^*P TS R \|_{L^2} \le \left( \frac{1}{\gamma} + O(\epsilon) \right) \| Q k (- \Delta_t)^{-1/2} U' TSR \|_{L^2}.\]
		Combining Corollary \ref{cor TS_commutator_bound} with Lemma \ref{lem comparison}, we have
		\[\| Q k (- \Delta_t)^{-1/2} U' TSR \|_{L^2} \le \| Q k (- \Delta_t)^{-1/2} TS U' R \|_{L^2} + \frac{O(\epsilon)}{M^2} \| QTSkR \|_{L^2},\]
		where we have made the factor \(M^{-2}\) explicit by absorbing the compensating factor \(M^2\) into \(O(\epsilon)\). It remains to estimate the noncommutator term. Using \eqref{U'R} and applying Lemma \ref{lem comparison} once more, we obtain
		\[\| Q k (- \Delta_t)^{-1/2} TS U' R \|_{L^2} \le \| QTS k (- \Delta_t)^{-1/2} \Theta \|_{L^2} + \| QTS \Xi \|_{L^2} \le \| QTS \Theta \|_{L^2} + \| QTS \Xi \|_{L^2}.\]
		Applying Young's inequality, we conclude that
		\begin{align*}
			\frac{1}{3} \re \langle P \partial_t (TS) A, PTSR \rangle \le& \left( \frac{\beta MU'_{\sup}}{6 \gamma^2} + \frac{2}{9 \gamma} + O(\epsilon) \right) \| QTSA \|_{L^2}^2 + \left( \frac{\beta MU'_{\sup}}{6 \gamma^2} + O(\epsilon) \right) \frac{1}{M^2} \| QTSkR \|_{L^2}^2 \\
			&+ \left( \frac{1}{4 \gamma} + O(\epsilon) \right) \| QTS \Theta \|_{L^2}^2 + \left( \frac{1}{4 \gamma} + O(\epsilon) \right) \| QTS \Xi \|_{L^2}^2.
		\end{align*}
		
		For the companion contribution, the decomposition established in the proof of Lemma \ref{lem P_Q_TS} yields
		\[\re \langle PTSA, P \partial_t(TS) R \rangle \le - \re \langle P^*PTSA, P^*PTSR \rangle - \re \langle P^*PTSA, Q^*Q TSR \rangle + O(\epsilon) \| Q P^*P TSA \|_{L^2} \| Q TSR \|_{L^2}.\]
		Since \(\| Q \|_{L^2 \to L^2} \lesssim 1\), estimate \eqref{P_Q_bound1} implies \(\| Q P^*P TSA\|_{L^2} \lesssim \| QTSA \|_{L^2}\); hence, the remainder is absorbed into the \(O(\epsilon)\)-terms below. The first term is controlled exactly as above by estimate \eqref{P_Q_bound1} in Lemma \ref{lem P_Q_bound}. For the second term, estimate \eqref{P_Q_bound3} in the same lemma gives
		\[- \re \langle P^*PTSA, Q^*QTSR \rangle \le \left(\frac{1}{\gamma} + O(\epsilon)\right) \| QTSA \|_{L^2} \| Q k (-\Delta_t)^{-1/2} U' TSR \|_{L^2}.\]
		Proceeding as in the preceding estimate and applying Young's inequality, we obtain
		\begin{align*}
			\frac{1}{3} \re \langle PTSA, P \partial_t (TS) R \rangle \le& \left( \frac{\beta MU'_{\sup}}{6 \gamma^2} + \frac{2}{9 \gamma} + O(\epsilon) \right) \| QTSA \|_{L^2}^2 + \left( \frac{\beta MU'_{\sup}}{6 \gamma^2} + O(\epsilon) \right) \frac{1}{M^2} \| QTSkR \|_{L^2}^2 \\
			&+ \left( \frac{1}{4 \gamma} + O(\epsilon) \right) \| QTS \Theta \|_{L^2}^2 + \left( \frac{1}{4 \gamma} + O(\epsilon) \right) \| QTS \Xi \|_{L^2}^2.
		\end{align*}
		
		For the contribution arising from \(\partial_t(P^*P)\), applying the Cauchy--Schwarz inequality, Lemma \ref{lem partial_tP^*P}, and Young's inequality, we obtain
		\begin{align*}
			\frac{1}{3} \re \langle \partial_t (P^*P) TSA, TSR \rangle &\le \frac{1}{3} \| (Q^{-1})^* \partial_t (P^*P) Q^{-1} \|_{L^2 \to L^2} \| QTSA \|_{L^2} \| QTSkR \|_{L^2} \\
			&\le \frac{1}{3} \left( \frac{\beta MU'_{\sup}}{\gamma} + O(\epsilon) \right) \frac{1}{M} \| QTSA \|_{L^2} \| QTSkR \|_{L^2} \\
			&\le \frac{1}{6} \left( \frac{\beta MU'_{\sup}}{\gamma} + O(\epsilon) \right) \| QTSA \|_{L^2}^2 + \frac{1}{6M^2} \left( \frac{\beta MU'_{\sup}}{\gamma} + O(\epsilon) \right) \| QTSkR \|_{L^2}^2.
		\end{align*}
		
		From the definition \(\Delta_t = - k^2 + D_t^2\), the next term can be written as
		\[- \frac{1}{3M^2} \re \langle PTS \Delta_t R, PTSR \rangle = \frac{1}{3M^2} \| PTSkR \|_{L^2}^2 + \frac{1}{3M^2} \| PTSD_tR \|_{L^2}^2 + \frac{1}{3M^2} \re \langle TSD_tR, [D_t, P^*P]TSR \rangle.\]
		After conjugating by \(V\), the symbol of the commutator is given by
		\begin{align*}
			\sigma (V^{-1} [D_t,P^*P] V) &= \left[ i\eta, \frac{1}{\gamma} \left\langle \frac{\eta}{k} \right\rangle^{-1/2} \# U'(y) \# \left\langle \frac{\eta}{k} \right\rangle^{-1/2} \right] \\
			&= \frac{1}{\gamma} \left\langle \frac{\eta}{k} \right\rangle^{-1/2} \# \left[ i\eta, U'(y) \right] \# \left\langle \frac{\eta}{k} \right\rangle^{-1/2} \\
			&= \frac{1}{\gamma}
			\left\langle\frac{\eta}{k}\right\rangle^{-1/2}
			\#U''(y)\#
			\left\langle\frac{\eta}{k}\right\rangle^{-1/2}.
		\end{align*}
		Consequently,
		\[\sigma (V^{-1} (Q^*)^{-1} [D_t,P^*P] (-\Delta_t)^{-1/2} Q^{-1} V) = \frac{\beta}{\gamma|k|} U'(y)^{-1/2} \# \left\langle \frac{\eta}{k} \right\rangle^{1/2} \# U''(y) \# \left\langle \frac{\eta}{k} \right\rangle^{-1/2} \# U'(y)^{-1/2}.\]
		By the composition formula \eqref{composition_formula}, the smallness assumption on \(U''\), and the Calder\'on--Vaillancourt Theorem \ref{thm CV}, we obtain
		\[\| (Q^*)^{-1} [D_t,P^*P] (-\Delta_t)^{-1/2} Q^{-1} \|_{L^2 \to L^2} \lesssim \epsilon.\]
		Therefore,
		\[\frac{1}{3M^2} \re \langle TSD_tR, [D_t,P^*P] TSR \rangle \lesssim \frac{\epsilon}{3M^2} \| QTSD_tR \|_{L^2} \| QTS \sqrt{-\Delta_t} R \|_{L^2} \lesssim \frac{\epsilon}{M^2} \| QTSkR \|_{L^2}^2 + \frac{\epsilon}{M^2} \| QTSD_tR \|_{L^2}^2,\]
		where we used \eqref{Pythagoras_Q} and Young's inequality in the last step. We conclude that
		\[- \frac{1}{3M^2} \re \langle PTS \Delta_t R, PTSR \rangle \le \frac{1}{3M^2} \| PTSkR \|_{L^2}^2 + \frac{1}{3M^2} \| PTSD_tR \|_{L^2}^2 + \frac{O(\epsilon)}{M^2} \| QTSkR \|_{L^2}^2 + \frac{O(\epsilon)}{M^2} \| QTSD_tR \|_{L^2}^2.\]
		
		Turning to the next term, Corollary \ref{cor TS_commutator_bound} allows us to reduce the analysis to \(- \frac{1}{3} \re \langle PU'TS2ikD_t\Delta_t^{-1} A, PTSR \rangle\). The Cauchy--Schwarz inequality and Lemma \ref{lem P_Q_bound} yield
		\begin{align*}
			- \frac{1}{3} \re \langle PU'TS2ikD_t\Delta_t^{-1} A, PTSR \rangle &\le \frac{2}{3} \sqrt{U'_{\sup}} \| (U')^{1/2} kD_t\Delta_t^{-1} TS A \|_{L^2} \| P^*P Q^{-1} \|_{L^2 \to L^2} \| QTSR \|_{L^2} \\
			&\le (1 + O(\epsilon)) \frac{2\sqrt{\beta}}{3\gamma} U'_{\sup} \| (U')^{1/2} kD_t\Delta_t^{-1} TS A \|_{L^2} \| QTSkR \|_{L^2}.
		\end{align*}
		The definition of \(Q\) gives
		\[\| (U')^{1/2} kD_t\Delta_t^{-1} TS A \|_{L^2} = \beta^{1/2} \| QD_t(-\Delta_t)^{-1/2}TSA \|_{L^2}.\]
		By Lemma \ref{lem comparison}, we have
		\[\| QD_t(-\Delta_t)^{-1/2}TSA \|_{L^2} \le (1 + O(\epsilon)) \| QTSA \|_{L^2}.\]
		Consequently, Young's inequality implies
		\[- \frac{1}{3} \re \langle PTS2ikU'D_t\Delta_t^{-1} A, PTSR \rangle \le \left( \frac{\beta MU'_{\sup}}{3\gamma} + O(\epsilon) \right) \| QTSA \|_{L^2}^2 + \left( \frac{\beta MU'_{\sup}}{3\gamma} + O(\epsilon) \right) \frac{1}{M^2} \| QTSkR \|_{L^2}^2.\]
		
		The subsequent term is handled in the same fashion. Indeed,
		\begin{align*}
			- \frac{1}{3} \re \langle P U' TS 2k (- \Delta_t)^{-1/2} \Xi, PTSR \rangle &\le \frac{2}{3} \sqrt{U'_{\sup}} \| \sqrt{U'} k (- \Delta_t)^{-1/2} TS \Xi \|_{L^2} \| P^*P Q^{-1} \|_{L^2 \to L^2} \| QTSR \|_{L^2} \\
			&\le \frac{2}{3} \sqrt{U'_{\sup}} \beta^{1/2} \| QTS \Xi \|_{L^2} (1 + O(\epsilon)) \frac{\sqrt{\beta U'_{\sup}}}{\gamma} \| QTSkR \|_{L^2} \\
			&\le \left( \frac{2\beta M U'_{\sup}}{3\gamma} + O(\epsilon) \right) \frac{1}{M} \| QTS \Xi \|_{L^2} \| QTSkR \|_{L^2}.
		\end{align*}
		By Corollary \ref{cor TS_commutator_bound} and Young's inequality, it follows that
		\[- \frac{1}{3} \re \langle P TS 2k U' (- \Delta_t)^{-1/2} \Xi, PTSR \rangle  \le \left( \frac{\beta MU'_{\sup}}{3\gamma} + O(\epsilon) \right) \| QTS \Xi \|_{L^2}^2 + \left( \frac{\beta MU'_{\sup}}{3\gamma} + O(\epsilon) \right) \frac{1}{M^2} \| QTSkR \|_{L^2}^2.\]
		
		Similarly, the last term is bounded by
		\[- \frac{1}{3} \re \langle PTS 2k^2U' \Delta_t^{-1} U'' D_t \Delta_t^{-1} R, PTSR \rangle \lesssim \frac{\epsilon}{M^2} \| QTSkR \|_{L^2}^2,\]
		where we also used Lemma \ref{lem comparison}.
		
		\noindent\textbf{Step 5. Closing the energy estimate.} Substituting all the preceding estimates into \eqref{dE/dt}, we obtain
		\begin{align*}
			\frac{d}{dt} E \le& \left( -\frac{2}{3} + \frac{\gamma}{2} \right) \frac{1}{M^2} \| PTSkR \|_{L^2}^2 + \left( -\frac{2}{3} + \gamma \right) \frac{1}{M^2} \| PTSD_tR \|_{L^2}^2 \\
			&+ \left( -\frac{4}{3} + 2\gamma \right) \| PTSA \|_{L^2}^2 - \| PTS \Theta \|_{L^2}^2 - \| PTS \Xi \|_{L^2}^2 \\
			&+ \left( -1 + 2 \beta MU'_{\sup} + \frac{\beta MU'_{\sup}}{3\gamma^2} + \frac{5\beta MU'_{\sup}}{6\gamma} + O(\epsilon) \right) \frac{1}{M^2} \| QTSkR \|_{L^2}^2 \\
			&+ \left( -1 + O(\epsilon) \right) \frac{1}{M^2} \| QTSD_tR \|_{L^2}^2 \\
			&+ \left( -1 + \frac{3\beta}{2}(1 + MU'_{\sup}) + \frac{\beta MU'_{\sup}}{3\gamma^2} + \frac{\beta MU'_{\sup}}{2\gamma} + \frac{4}{9\gamma} + O(\epsilon)\right) \| QTSA \|_{L^2}^2 \\
			&+ \left( -1 + 2\beta + \frac{1}{2\gamma} + O(\epsilon)\right) \| QTS \Theta \|_{L^2}^2 \\
			&+ \left(-1 + \frac{\beta}{2}(1 + MU'_{\sup}) + \frac{\beta MU'_{\sup}}{3\gamma} + \frac{1}{2\gamma} + O(\epsilon)\right) \| QTS \Xi \|_{L^2}^2.
		\end{align*}
		Taking \(\gamma = \frac{2}{3}\), the coefficients of \(\| PTSD_tR \|_{L^2}^2\) and \(\|PTSA\|_{L^2}^2\) vanish. The definition of \(\beta\) and the assumed smallness of \(\epsilon\) imply that the preceding estimate yields \eqref{derivative} for some absolute constant \(c_0 > 0\), independent of \(U'_{\inf}\), \(U'_{\sup}\), and \(M\). This completes the proof.
	\end{proof}
	
	\section{Higher- and lower-order weighted energy estimates}
	\label{sec6}
	
	The motivation for the terminal-time-dependent weights was explained in Subsection \ref{subsec strategy}, and here we give their precise definitions and establish the corresponding higher- and lower-order energy estimates. For each \(s \ge 0\), these weights are defined on the interval \(0 \le t \le s\). Since they are obtained by conjugating Fourier multipliers by \(V\), they commute with \(D_t\), \(\Delta_t\), \(T\), and \(S\), thereby preserving the structure of the basic energy estimate.
	
	\subsection{The higher-order weighted energy estimate}
	
	We first introduce the weight used in the higher-order energy estimate. Let
	\[g(\xi) = 1 + \langle \xi \rangle - \xi.\]
	It satisfies
	\[g(\xi) \approx
	\begin{cases}
		\langle \xi \rangle, &\qquad \xi \le 0, \\
		1, &\qquad \xi \ge 0,
	\end{cases}
	\qquad g'(\xi) = \xi \langle \xi \rangle^{-1} - 1 < 0.\]
	For each terminal time \(s \ge 0\), define
	\[G_s = V \op \left( g \left( \frac{\eta}{k} + 2 U'_{\sup} (t - s) \right) \right) V^{-1}.\]
	We now state the corresponding higher-order energy estimate.
	
	\begin{proposition}
		\label{prop higher}
		Let \(s \ge 0\) be fixed, and let \(G_s\) be defined as above. For any solution \((R, A, \Theta)\) of \eqref{R_A_Theta}, define
		\begin{align*}
			\widetilde{E}_{1, s} =& \frac{1}{2M^2} \| TSkG_sR \|_{L^2}^2 + \frac{1}{2M^2} \| TSD_tG_sR \|_{L^2}^2 + \frac{1}{2} \| TSG_sA \|_{L^2}^2 + \frac{1}{2} \| TSG_s \Theta \|_{L^2}^2 \\
			&+ \frac{1}{2} \| TS k (- \Delta_t)^{-1/2} (G_s \Theta - U' G_s R) \|_{L^2}^2 + \frac{1}{3} \re \langle PTSG_sA, PTSG_sR \rangle.
		\end{align*}
		Then
		\begin{equation}
			\label{higher_equivalent}
			\widetilde{E}_{1, s} \approx \frac{1}{M^2} \| TS\sqrt{-\Delta_t}G_sR \|_{L^2}^2 + \| TSG_sA \|_{L^2}^2 + \| TSG_s \Theta \|_{L^2}^2 + \| TS k (-\Delta_t)^{-1/2} G_s (\Theta - U'R) \|_{L^2}^2,
		\end{equation}
		where the equivalence constants are uniform for \(0 \le t \le s\). Moreover,
		\begin{equation}
			\label{higher_derivative}
			\frac{d}{dt} \widetilde{E}_{1, s} (t) \le 0, \qquad 0 \le t \le s.
		\end{equation}
		In particular,
		\[\widetilde{E}_{1, s} (s) \le \widetilde{E}_{1, s} (0).\]
	\end{proposition}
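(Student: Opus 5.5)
The plan is to apply the basic energy argument of Proposition \ref{prop basic_energy} to the weighted unknowns $(G_sR, G_sA, G_s\Theta)$. Any error caused by $G_s$ failing to commute with $U'$ and $U''$ will be absorbed by new dissipation generated from $\partial_t G_s$.

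Since $G_s$ is a $V$-conjugated Fourier multiplier, it commutes with $D_t$, $\Delta_t$, $T$, $S$ and $(-\Delta_t)^{-1/2}$. Applying $G_s$ to \eqref{R_A_Theta} therefore gives a system for $\widetilde R=G_sR$, $\widetilde A=G_sA$, $\widetilde\Theta=G_s\Theta$ of the same form, plus two kinds of extra terms. The first kind are $(\partial_tG_s)G_s^{-1}$ applied to each unknown. The second kind are commutator terms $[G_s,U']$ and $[G_s,U'']$ multiplying lower-order operators.

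\textbf{Equivalence.} I would prove \eqref{higher_equivalent} exactly as in Step 1 of Proposition \ref{prop basic_energy}. The only new ingredient is that the $\Xi$-type term is written with $U'G_sR$ rather than $G_sU'R$. The difference $[U',G_s]R$ has to be shown harmless uniformly in $t\le s$, and for this I need symbol bounds on $[U',G_s]G_s^{-1}$.

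Because $g$ has order $1$ and satisfies $|g^{(l)}|\lesssim \langle\xi\rangle^{1-l}$ (with $g\ge 1$ and $|g'/g|\lesssim\langle\xi\rangle^{-1}$), a computation like the one in Lemma \ref{lem TS_commutator} should give
\[
\bigl|\partial_y^j\partial_\eta^l\sigma\bigl(V^{-1}[U',G_s]G_s^{-1}V\bigr)\bigr|\lesssim \frac{\epsilon}{|k|}\Bigl\langle \frac{\eta}{k}+2U'_{\sup}(t-s)\Bigr\rangle^{-1}.
\]
This bound is uniform in $s$ and $t$ because the translation does not affect derivative bounds. To control the translated Japanese bracket after Taylor expansion I would use the Peetre-type inequality $g(\xi+\zeta)\lesssim\langle\zeta\rangle g(\xi)$. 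The resulting errors are then $O(\epsilon)$ relative to the $Q$-norms, and hence relative to $E$.

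\textbf{Monotonicity.} Differentiating $\widetilde E_{1,s}$ reproduces all the terms of \eqref{dE/dt} with hatted unknowns, so the estimate \eqref{derivative} holds for them verbatim. It then remains to handle three groups of terms.

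First, there are the terms where $\partial_t$ hits $G_s$. Writing $\partial_tG_s = V\bigl(2U'_{\sup}\,g' + ik[U,\op(g)]\bigr)V^{-1}$, the principal symbol of $(\partial_tG_s)G_s^{-1}$ is
\[
(2U'_{\sup}-U'(y))\,\frac{g'}{g}\Bigl(\frac{\eta}{k}+2U'_{\sup}(t-s)\Bigr)\le -U'_{\inf}\,\frac{|g'|}{g}\le 0 .
\]
I would turn this into an operator inequality of the form $\re\langle(\partial_tG_s)G_s^{-1}TSf,TSf\rangle\le O(\epsilon)\|QTSf\|^2$, up to a further nonpositive term. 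The argument follows the proof of Lemma \ref{lem P_Q_TS}, or Lemma \ref{lem comparison} for the nonpositive self-adjoint principal part.

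Second, these same terms must be inserted into the cross term $\frac13\re\langle PTS\widetilde A,PTS\widetilde R\rangle$. Here sign information is lost. The cross term is bounded by the diagonal terms, so I plan to keep the negative contribution $-\|\sqrt{|g'|/g}\,\cdot\|^2$ coming from the diagonal parts and use it to dominate the cross part, via Cauchy--Schwarz with the same weight.

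Third, there are the commutators $[G_s,U']$ and $[G_s,U'']$ arising from the $A$-, $\Theta$- and $\Xi$-equations. By the symbol bound above together with Corollary \ref{cor TS_commutator_bound}-type arguments, these are $O(\epsilon)$ times $Q$-dissipation, so the $c_0$ term in \eqref{derivative} absorbs them.

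\textbf{Main obstacle.} The hardest point is the cross term in the second group. Its $(\partial_tG_s)G_s^{-1}$ contribution is only lower order if $|g'|/g$ is dominated by the available dissipation $P^*P+Q^*Q$ plus the new $G_s$-dissipation. In the region $\xi+2U'_{\sup}(t-s)\ll -1$, the ratio $|g'|/g\sim\langle\xi\rangle^{-1}$ is comparable to the $P$-scale evaluated at the shifted frequency, not the unshifted one. So I would not bound it by $P$. Instead I would use the $G_s$-generated negative term itself, with weight $\frac{|g'|}{g}(2U'_{\sup}-U')$, together with the coefficient $\frac13<1$ of the cross term, to close by Young's inequality.
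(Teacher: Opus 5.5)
Your architecture matches the paper's: run the basic energy argument on $(G_sR,G_sA,G_s\Theta)$ and use the sign of $p_G=(2U'_{\sup}-U')\,\frac{g'}{g}(\zeta/k)$, where $\zeta=\eta+2kU'_{\sup}(t-s)$. However, the step where you absorb the $G_s$-commutators into the $Q$-dissipation (``so the $c_0$ term absorbs them'') fails uniformly in $0\le t\le s$.

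\textbf{The $G_s$-commutators cannot be absorbed by $Q$.} Your commutator bound is in the shifted variable $\zeta$. By contrast, $Q^*Q\approx\beta^{-1}U'\langle\eta/k\rangle^{-2}$ lives in the unshifted variable. Near $\zeta\approx 0$, i.e.\ $\eta/k\approx N:=2U'_{\sup}(s-t)$, the errors do not decay.

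For example, consider the term $\re\langle TS\,2k[G_s,U'](-\Delta_t)^{-1/2}\Xi,\,TSG_sA\rangle$ from the $A$-equation. Relative to $TSG_s\Xi$ and $TSG_sA$, its symbol has size $\epsilon\,\frac{|g'|}{g}(\zeta/k)\,|k|^{-1}\langle\eta/k\rangle^{-1}\sim\epsilon/(|k|N)$ there. Meanwhile $\|QTSG_sA\|\,\|QTSG_s\Xi\|$ only supplies a weight $\sim 1/(\beta N^2)$. The ratio grows like $\epsilon\beta N$, and Peetre's inequality converts $\langle\zeta/k\rangle$ into $\langle\eta/k\rangle$ only at the cost of $\langle s-t\rangle$.

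For the equivalence \eqref{higher_equivalent} this is harmless, since there only the uniform bound $\|TS[G_s,U']G_s^{-1}(TS)^{-1}\|\lesssim\epsilon$ is needed.

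\textbf{What the monotonicity step needs.} You must keep the full new dissipation $-(1-O(\epsilon))\|B_s\,\cdot\,\|^2$, with $B_s=V\op(\sqrt{-p_G})V^{-1}$, rather than discarding it as ``a further nonpositive term''. The remainder of $(\partial_tG_s)G_s^{-1}$ is itself only small relative to $-p_G$, not relative to $Q^*Q$. You then need $B_s$-normalized bounds such as $\|(B_s^{-1})^*TS[U',G_s]G_s^{-1}(TS)^{-1}B_s^{-1}\|\lesssim\epsilon$, together with a $B_s$-analogue of Lemma~\ref{lem comparison}. All the $G_s$-errors, including $[D_t,\partial_tG_s]=ik[U',G_s]$ and $[(\partial_tG_s)G_s^{-1},U']$ in the $\Xi$-component, have symbols of size $\epsilon\,|g'|/g$ in the same shifted variable, so $B_s^*B_s$ absorbs them. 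Your first sentence announces exactly this, but your first and third groups do not implement it.

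\textbf{The cross term is the easy part.} The term you flag as the main obstacle is straightforward, and your fix as stated does not close for every $M$. Because $0<-g'/g\le 1$, the operator $TS(\partial_tG_s)G_s^{-1}(TS)^{-1}$ is bounded by $2(1+O(\epsilon))U'_{\sup}$. No decay from $g'/g$ is needed, for two reasons. First, the cross term already carries $P^*P$, with $\|U'^{-1/2}P^*PQ^{-1}\|\le(1+O(\epsilon))\sqrt{\beta}/\gamma$. Second, $R$ enters one derivative below its energy level: $\|U'^{1/2}TSG_sR\|=\beta^{1/2}\|Q|k|^{-1}\sqrt{-\Delta_t}\,TSG_sR\|$. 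Hence the contribution is at most $(\frac{\beta MU'_{\sup}}{2}+O(\epsilon))$ times the $Q$-dissipation of $G_sA$, $kG_sR$ and $D_tG_sR$. It is absorbed by the choice $\beta\ll(MU'_{\sup})^{-1}$; this is the paper's route.

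Your alternative is Young's inequality against the $B_s$-dissipation of $G_sA$ and $G_sR$, relying only on the size of the coefficient $\frac13$. This leaves a factor of order $MU'_{\sup}$, which is precisely the subsonic restriction that the $\Xi$-term in $E$ was introduced to remove. It can be repaired only by redoing the coercivity argument of Step~1 of Proposition~\ref{prop basic_energy} at the $B_s$-weighted level. That means using $|k|(-\Delta_t)^{-1/2}U'R=\sgn(k)[k(-\Delta_t)^{-1/2}\Theta-\Xi]$ and the $B_s$-dissipation of the $\Theta$- and $\Xi$-components as well. The price is further commutators of $B_s$ with $U'$ and $P^*P$.
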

	
	\begin{proof}
		Applying the basic energy equivalence \eqref{equivalent_energy} to \((G_sR,G_sA,G_s\Theta)\), we first obtain
		\[\widetilde{E}_{1,s}\approx \frac{1}{M^2} \| TS \sqrt{-\Delta_t} G_s R \|_{L^2}^2 + \| TS G_s A \|_{L^2}^2 + \| TS G_s \Theta \|_{L^2}^2 + \| TS k (-\Delta_t)^{-1/2} (G_s \Theta - U' G_s R) \|_{L^2}^2.\]
		To derive \eqref{higher_equivalent}, it remains to replace the last term by \(\| TS k (-\Delta_t)^{-1/2} G_s (\Theta - U'R) \|_{L^2}^2\). The identity
		\[G_s \Theta - U' G_s R = G_s (\Theta - U' R) + [G_s, U'] R\]
		shows that it suffices to control the commutator term. 
		
		For simplicity, we denote \(\zeta = \eta + 2k U'_{\sup} (t - s)\). Then the composition formula \eqref{composition_formula} and Taylor's theorem give
		\[\sigma (V^{-1} [G_s, U'] G_s^{-1} V)(y,\eta) = \frac{1}{2 \pi} \int_{\mathbb{R}} \int_{\mathbb{R}} \int_{0}^{1} e^{-iz\lambda} \lambda \frac{1}{k} g' \left( \frac{\zeta + \theta\lambda}{k} \right) U'(y+z) g \left( \frac{\zeta}{k} \right)^{-1} d\theta d\lambda dz.\]
		Since \(g'\), \(g^{-1}\), and all their derivatives required below are uniformly bounded, the integrations by parts in \(z\) and \(\lambda\), together with the smallness assumption on the derivatives of \(U'\), yield
		\[|\partial_y^j \partial_\eta^l \sigma (V^{-1} [G_s, U'] G_s^{-1} V) (y,\eta)| \lesssim \epsilon\]
		for all nonnegative \(j\) and \(l\) up to a sufficiently large finite order. Applying the composition formula once more, together with the Calder\'on--Vaillancourt Theorem \ref{thm CV}, we have
		\[\| TS [G_s, U'] G_s^{-1} (TS)^{-1} \|_{L^2 \to L^2} \lesssim \epsilon.\]
		Hence
		\[\| TS k (-\Delta_t)^{-1/2} [G_s,U'] R \|_{L^2} \lesssim \epsilon \| TSG_sR \|_{L^2} \lesssim \frac{\epsilon}{M} \|TS \sqrt{-\Delta_t} G_s R \|_{L^2}.\]
		Here we obtained the factor \(M^{-1}\) at the expense of the \(M\)-dependent implicit constant in \(\lesssim\). Taking \(\epsilon\) sufficiently small, the commutator term is absorbed by the acoustic part of the energy, and \eqref{higher_equivalent} follows.
		
		To prove \eqref{higher_derivative}, we differentiate \(\widetilde E_{1,s}\). The resulting terms are divided into two classes, according to whether the time derivative falls on \(G_s\) or on the remaining factors. As argued above, the terms in the latter class consist of the corresponding terms in \eqref{dE/dt}, with \(R, A, \Theta\) replaced by \(G_sR, G_sA, G_s \Theta\), together with commutator errors. On the other hand, owing to the careful choice of \(G_s\), the terms arising from the positive quadratic part of the energy generate an additional dissipative contribution, which will be used to absorb these commutator errors.
		
		\noindent\textbf{Step 1. The basic terminal-time dissipation.} We first identify the dissipation generated when the time derivative falls on \(G_s\) in the positive quadratic part of the energy. Taking the contribution involving \(G_sA\) as the basic case, we compute the symbol of \(TS (\partial_t G_s) G_s^{-1} (TS)^{-1}\) and use its negative principal part to define the dissipative operator \(B_s\). The components requiring additional commutator estimates are treated in the next step.
		
		Inserting \(G_s^{-1} (TS)^{-1} TS G_s\), we write
		\[\re \langle TS \partial_t G_s A, TSG_sA \rangle = \re \langle TS (\partial_t G_s)G_s^{-1} (TS)^{-1} TSG_sA, TSG_sA \rangle.\]
		We first compute the symbol of \(TS (\partial_t G_s) G_s^{-1} (TS)^{-1}\). Differentiating \(G_s\) gives
		\[\partial_t G_s = ik V \left[ U, \op \left( g \left( \frac{\zeta}{k} \right) \right) \right] V^{-1} + 2 U'_{\sup} V \op \left( g' \left( \frac{\zeta}{k} \right) \right) V^{-1}.\]
		The composition formula \eqref{composition_formula} and Taylor's theorem yield
		\[\sigma \left( \left[ U, \op \left( g \left( \frac{\zeta}{k} \right) \right) \right] \right) = \frac{iU'(y)}{k} g' \left( \frac{\zeta}{k} \right) - \frac{1}{2 \pi} \int_{\mathbb{R}} \int_{\mathbb{R}} \int_{0}^{1} e^{-i z \lambda} \lambda^2 (1 - \theta) \frac{1}{k^2} g'' \left( \frac{\zeta + \theta \lambda}{k} \right) U(y + z) d \theta d \lambda dz.\]
		Thus,
		\begin{align*}
			\sigma (V^{-1} (\partial_t G_s) G_s^{-1} V) =& (2 U'_{\sup} - U'(y)) g' \left( \frac{\zeta}{k} \right) \left( g \left( \frac{\zeta}{k} \right) \right)^{-1} \\
			&- \frac{i}{2 \pi} \int_{\mathbb{R}} \int_{\mathbb{R}} \int_{0}^{1} e^{-i z \lambda} \lambda^2 (1 - \theta) \frac{1}{k} g'' \left( \frac{\zeta + \theta \lambda}{k} \right) \left( g \left( \frac{\zeta}{k} \right) \right)^{-1} U(y + z) d \theta d \lambda dz \\
			&\eqqcolon p_G (y, \zeta) + r_G (y, \zeta).
		\end{align*}
		We next estimate the remainder relative to this negative principal symbol. Since
		\[g''(\xi) = \langle \xi \rangle^{-3}, \qquad - g'(\xi) \gtrsim \langle \xi \rangle^{-2},\]
		we have, for every \(m \ge 2\),
		\[\left| g^{(m)} \left( \frac{\zeta + \theta \lambda}{k} \right) \right| \lesssim \left\langle \frac{\zeta + \theta \lambda}{k} \right\rangle^{-3} \lesssim \left\langle \frac{\zeta}{k} \right\rangle^{-2} \left\langle \frac{\theta \lambda}{k} \right\rangle^2 \lesssim - g' \left( \frac{\zeta}{k} \right) \langle \lambda \rangle^2.\]
		Integrating by parts in \(z\) and \(\lambda\), using the smallness assumption on the derivatives of \(U''\), together with the bound
		\[\left| \partial_\zeta^m \left( g \left( \frac{\zeta}{k} \right) \right)^{-1} \right| \lesssim \left( g \left( \frac{\zeta}{k} \right) \right)^{-1}, \qquad m \ge 0,\]
		we obtain
		\begin{equation}
			\label{r_G}
			| \partial_y^j \partial_\zeta^l r_G (y, \zeta) | \lesssim - \epsilon g' \left( \frac{\zeta}{k} \right) \left( g \left( \frac{\zeta}{k} \right) \right)^{-1} \lesssim - \epsilon p_G(y, \zeta),
		\end{equation}
		for all \(j\) and \(l\) up to a sufficiently large finite order. Here we used \(2 U'_{\sup} - U' \ge U'_{\sup}\), with the factor \((U'_{\sup})^{-1}\) absorbed into the implicit constant in \(\lesssim\).
		
		It remains to conjugate this operator by \(TS\). A further application of the composition formula \eqref{composition_formula} gives
		\begin{align*}
			\sigma (V^{-1} TS (\partial_t G_s) G_s^{-1} (TS)^{-1} V) =& (p_G + r_G) (y, \zeta) \\
			&+ \frac{1}{2\pi} \int_{\mathbb{R}} \int_{\mathbb{R}} \int_{0}^{1} e^{-iz\lambda} \lambda \partial_\eta \tau (\eta + \theta \lambda) (p_G + r_G) (y+z, \zeta) (\tau (\eta))^{-1} d \theta d \lambda dz.
		\end{align*}
		Here \(\tau = \sigma (V^{-1} TS V)\) satisfies
		\begin{equation}
			\label{tau}
			\tau (\eta) = \exp \left(\frac{1}{\gamma} \sinh^{-1} \frac{\eta}{k} + \frac{1}{\beta} \tan^{-1} \frac{\eta}{k} \right) \approx
			\begin{cases}
				\left\langle \frac{\eta}{k} \right\rangle^{-1/\gamma}, &\qquad \frac{\eta}{k} \le 0, \\
				\left\langle \frac{\eta}{k} \right\rangle^{1/\gamma}, &\qquad \frac{\eta}{k} \ge 0.
			\end{cases}
		\end{equation}
		
		Denote the integral term above by \(r_{TS,G}(y, \zeta)\). We first integrate by parts in \(z\). Since the other factors are independent of \(z\), the resulting derivative falls on \((p_G + r_G)(y+z, \zeta)\). It follows that
		\[|\partial_z p_G (y + z, \zeta)| \lesssim - \epsilon p_G(y, \zeta).\]
		Together with \eqref{r_G}, this shows that the first integration by parts produces an additional factor \(O(\epsilon)\). Moreover, Claim \ref{cla sinh} implies
		\[\partial_\eta^m \tau (\eta + \theta \lambda) (\tau (\eta))^{-1} \lesssim \langle \lambda \rangle^3, \qquad m \ge 1.\]
		Further integrations by parts in \(z\) and \(\lambda\), using this bound, then yield
		\[| r_{TS,G}(y, \zeta)| \lesssim - \epsilon p_G(y, \zeta).\]
		The same estimate holds for the derivatives of \(r_{TS,G}\) with respect to \(y\) and \(\zeta\). Combining this term with \(r_G\), we set
		\[\widetilde{r}_G = r_G + r_{TS,G}.\]
		It follows that
		\begin{equation}
			\label{TS_G}
			\sigma (V^{-1} TS (\partial_t G_s) G_s^{-1} (TS)^{-1} V) = p_G + \widetilde{r}_G,
		\end{equation}
		with
		\begin{equation}
			\label{widetilde_r_G}
			|\partial_y^j \partial_\zeta^l \widetilde{r}_G(y, \zeta)| \lesssim - \epsilon p_G(y, \zeta),
		\end{equation}
		for all \(j\) and \(l\) up to a sufficiently large finite order.
		
		We define
		\[B_s = V \op (\sqrt{- p_G}) V^{-1} = V \op \left( (2 U'_{\sup} - U'(y))^{1/2} b(\zeta) \right) V^{-1},\]
		where, for simplicity, we denote
		\[b(\zeta) = \sqrt{- g' \left( \frac{\zeta}{k} \right) \left( g \left( \frac{\zeta}{k} \right) \right)^{-1}}.\]
		Then
		\[\sigma (V^{-1} B_s^* B_s V) = b(\zeta) \# (2 U'_{\sup} - U'(y)) \# b(\zeta) = (2 U'_{\sup} - U'(y)) b(\zeta)^2 + [b(\zeta), 2 U'_{\sup} - U'(y)] b(\zeta).\]
		The first term is equal to \(- p_G\), and we denote the second term by \(r_B\). By the composition formula \eqref{composition_formula} and Taylor's theorem, we have
		\[[b(\zeta), 2 U'_{\sup} - U'(y)] = [b(\zeta), - U'(y)] = - \frac{1}{2\pi} \int_{\mathbb{R}} \int_{\mathbb{R}} \int_{0}^{1} e^{-iz\lambda} \lambda \partial_\zeta b(\zeta + \theta \lambda) U'(y+z) d \theta d \lambda dz.\]
		To estimate this commutator, recall that
		\[g(\xi) \approx \langle \xi_- \rangle, \qquad g'(\xi) \approx - \langle \xi_+ \rangle^{-2},\]
		where \(\xi_+ = \max \{ \xi, 0 \}\) and \(\xi_- = \max \{ - \xi, 0 \} \). It follows that
		\[\sqrt{- g' (\xi) g(\xi)^{-1}} \approx \langle \xi_+ \rangle^{-1} \langle \xi_- \rangle^{-1/2} \approx \langle \xi \rangle^{-1/2} \langle \xi_+ \rangle^{-1/2}.\]
		Thus, for \(m \ge 1\),
		\begin{align*}
			|\partial_\zeta^m b(\zeta + \theta \lambda)| &\lesssim b(\zeta + \theta \lambda) \\
			&\approx \left\langle \frac{\zeta + \theta \lambda}{k} \right\rangle^{-1/2} \left\langle \left( \frac{\zeta + \theta \lambda}{k} \right)_+ \right\rangle^{-1/2} \\
			&\lesssim \left\langle \frac{\zeta}{k} \right\rangle^{-1/2} \left\langle \frac{\theta \lambda}{k} \right\rangle^{1/2} \left\langle \left(\frac{\zeta}{k}\right)_+ \right\rangle^{-1/2} \left\langle \frac{\theta \lambda}{k} \right\rangle^{1/2} \\
			&\lesssim b(\zeta) \langle \lambda \rangle.
		\end{align*}
		Integrating by parts in the oscillatory variables and then composing with \(b(\zeta)\), we obtain
		\begin{equation}
			\label{r_B}
			|r_B(y, \zeta)| \lesssim - \epsilon p_G(y, \zeta).
		\end{equation}
		The same estimate holds for the derivatives of \(r_B\) with respect to \(y\) and \(\zeta\).
		
		Combining these decompositions, we obtain
		\[\re \langle TS (\partial_t G_s)G_s^{-1} (TS)^{-1} TSG_sA, TSG_sA \rangle + \| B_s TSG_sA \|_{L^2}^2 = \re \langle \mathcal{R}_{G, B} TSG_sA, TSG_sA \rangle,\]
		where \(\mathcal{R}_{G, B} = V \op (\widetilde{r}_G + r_B) V^{-1}\). We only need to estimate \(\| (B_s^{-1})^* \mathcal{R}_{G, B} B_s^{-1} \|_{L^2 \to L^2}\). Its symbol is
		\[\sigma (V^{-1} (B_s^{-1})^* \mathcal{R}_{G, B} B_s^{-1} V) = (2 U'_{\sup} - U'(y))^{-1/2} \# b(\zeta)^{-1} \# (\widetilde{r}_G + r_B) \# b(\zeta)^{-1} \# (2 U'_{\sup} - U'(y))^{-1/2}.\]
		The composition formula \eqref{composition_formula}, the estimates \eqref{widetilde_r_G} and \eqref{r_B} for \(\widetilde{r}_G\) and \(r_B\), respectively, and the Calder\'on--Vaillancourt Theorem \ref{thm CV} give
		\[\left\| b(\zeta)^{-1} \# (\widetilde{r}_G + r_B) \# b(\zeta)^{-1} \right\|_{L^2 \to L^2} \lesssim \epsilon.\]
		Moreover, since the two outer factors \((2U'_{\sup} - U'(y))^{-1/2}\) can be absorbed into the implicit constant in \(\lesssim\), we obtain
		\begin{equation}
			\label{B_s_remainder}
			\| (B_s^{-1})^* \mathcal{R}_{G, B} B_s^{-1} \|_{L^2 \to L^2} \lesssim \epsilon.
		\end{equation}
		Thus,
		\[\re \langle TS \partial_t G_s A, TSG_sA \rangle = \re \langle TS (\partial_t G_s)G_s^{-1} (TS)^{-1} TSG_sA, TSG_sA \rangle \le (-1 + O(\epsilon)) \| B_s TSG_sA \|_{L^2}^2 \le 0.\]
		
		\noindent\textbf{Step 2. The remaining contributions from the positive quadratic part.} The term involving \(D_t G_s R\) requires a minor modification. When the time derivative falls on \(G_s\), the resulting contribution is
		\[\frac{1}{M^2} \re \langle TS D_t \partial_t G_s R, TS D_t G_s R\rangle = \frac{1}{M^2} \re \langle TS \partial_t G_s D_t R, TS D_t G_s R \rangle + \frac{1}{M^2} \re \langle TS [D_t, \partial_t G_s] R, TS D_t G_s R \rangle.\]
		Although \(D_t\) commutes with \(G_s\), it does not commute with \(\partial_t G_s\) in general. Differentiating the identity \([D_t, G_s] = 0\) with respect to time, we obtain
		\[0 = \partial_t [D_t, G_s] = [\partial_t D_t, G_s] + [D_t, \partial_t G_s].\]
		Since \(\partial_t D_t = -ikU'\), it follows that
		\[[D_t, \partial_t G_s] = - [\partial_t D_t, G_s] = ik [U', G_s].\]
		Consequently,
		\[\frac{1}{M^2} \re \langle TS D_t \partial_t G_s R, TS D_t G_s R\rangle = \frac{1}{M^2} \re \langle TS \partial_t G_s D_t R, TS D_t G_s R \rangle + \frac{1}{M^2} \re \langle TS ik [U', G_s] R, TS D_t G_s R \rangle.\]
		The first term is treated exactly as the term involving \(A\), with \(A\) replaced by \(D_tR\), and therefore generates the corresponding \(B_s\)-dissipation. The second term is an additional commutator error, which is estimated in the same way as the commutator terms considered below. We therefore omit the details.
		
		We also consider the contribution involving \(k (-\Delta_t)^{-1/2} (G_s \Theta - U' G_s R)\), as it does not admit the same simple decomposition as the \(D_t G_s R\)-term. The identity
		\[(\partial_t G_s) \Theta - U' (\partial_t G_s) R = (\partial_t G_s) G_s^{-1} (G_s \Theta - U' G_s R) + [(\partial_t G_s) G_s^{-1}, U'] G_s R\]
		separates this contribution into a principal term and a commutator term. For the principal term, we need to consider the operator \(TS (-\Delta_t)^{-1/2} (\partial_t G_s) G_s^{-1} (-\Delta_t)^{1/2} (TS)^{-1}\), whose symbol after conjugation by \(V\) is
		\[\sigma (V^{-1} TS (-\Delta_t)^{-1/2} (\partial_t G_s) G_s^{-1} (-\Delta_t)^{1/2} (TS)^{-1} V) = \left\langle \frac{\eta}{k} \right\rangle^{-1} \# (p_G + \widetilde{r}_G) \# \left\langle \frac{\eta}{k} \right\rangle.\]
		By the composition formula \eqref{composition_formula}, the principal symbol of this operator remains \(p_G\), while the remainder and all its derivatives required by the Calder\'on--Vaillancourt Theorem \ref{thm CV} are bounded by \(O(\epsilon)(-p_G)\). Therefore, arguing as in the proof of \eqref{B_s_remainder}, we obtain
		\begin{align*}
			&\re \langle TS k (-\Delta_t)^{-1/2} (\partial_t G_s) G_s^{-1} (G_s \Theta - U' G_s R), TS k (-\Delta_t)^{-1/2} (G_s \Theta - U' G_s R) \rangle \\
			\le& (-1 + O(\epsilon)) \| B_s TS k (-\Delta_t)^{-1/2} (G_s \Theta - U' G_s R) \|_{L^2}^2.
		\end{align*}
		
		For the commutator term, we first recall the decomposition \(\sigma (V^{-1} (\partial_t G_s) G_s^{-1} V) = p_G + r_G\). The composition formula \eqref{composition_formula} and Taylor's theorem give
		\[\sigma (V^{-1} [(\partial_t G_s) G_s^{-1}, U'] V) = - \frac{i}{2\pi} \int_{\mathbb{R}} \int_{\mathbb{R}} \int_{0}^{1} e^{-iz\lambda} \partial_\zeta (p_G + r_G)(y,\zeta+\theta\lambda) U''(y+z) d\theta d\lambda dz.\]
		Indeed, in the Taylor expansion of \(p_G+r_G\) with respect to \(\zeta\), the zeroth-order term cancels in the commutator, whereas the first-order term contains a factor \(\lambda\). Integrating by parts in \(z\) transfers this factor to \(U'\), thereby producing \(U''\). Using \eqref{r_G}, the smallness assumptions on the derivatives of \(U'\), and further integrations by parts in \(z\) and \(\lambda\), we obtain
		\[|\partial_y^j \partial_\zeta^l \sigma (V^{-1} [(\partial_t G_s) G_s^{-1}, U'] V)| \lesssim - \epsilon p_G\]
		for all \(j\) and \(l\) up to the finite order required by the Calder\'on--Vaillancourt Theorem \ref{thm CV}. Conjugating by \(TS\) and repeating the argument leading to \eqref{TS_G} and \eqref{widetilde_r_G}, we further obtain
		\[|\partial_y^j \partial_\zeta^l \sigma (V^{-1} TS [(\partial_t G_s) G_s^{-1}, U'] (TS)^{-1} V)| \lesssim - \epsilon p_G.\]
		The same argument as that leading to \eqref{B_s_remainder} yields
		\[\| (B_s^{-1})^* TS k (-\Delta_t)^{-1/2} [(\partial_t G_s) G_s^{-1}, U'] (TS)^{-1} B_s^{-1} \|_{L^2 \to L^2} \lesssim \epsilon.\]
		Therefore, by the Cauchy--Schwarz inequality and Young's inequality, the commutator contribution is absorbed by the \(B_s\)-dissipation associated with \(k (-\Delta_t)^{-1/2} (G_s \Theta-U' G_s R)\) and \(k G_s R\). Combining this estimate with the principal contribution, we obtain
		\begin{align*}
			&\re\langle TS k (-\Delta_t)^{-1/2} ((\partial_t  G_s) \Theta - U' (\partial_t G_s) R), TS k (-\Delta_t)^{-1/2} (G_s \Theta - U' G_s R) \rangle \\
			\le& (-1 + O(\epsilon)) \| B_s TS k (-\Delta_t)^{-1/2} (G_s \Theta - U' G_s R) \|_{L^2}^2 + \frac{O(\epsilon)}{M^2} \| B_s TS kG_sR \|_{L^2}^2.
		\end{align*}
		
		\noindent\textbf{Step 3. Commutators with the evolution equations.} We next consider the terms in the second class, namely, those for which the time derivative falls on factors other than \(G_s\). Consider, for example, one of the terms generated by \(\partial_t A\), which can be decomposed as
		\[- \re \langle TSG_s2ikU'D_t\Delta_t^{-1} A, TSG_sA \rangle = - \re \langle TS2ikU'D_t\Delta_t^{-1}G_sA, TSG_sA \rangle + \re \langle TS 2ik[U', G_s] D_t\Delta_t^{-1} A, TSG_sA \rangle.\]
		With \(A\) replaced by \(G_sA\), the first term has exactly the same structure as the corresponding term in \eqref{dE/dt} and is therefore controlled by the estimate used in the proof of Proposition \ref{prop basic_energy}. It remains to estimate the commutator term.
		
		By a \(B_s\)-normalized symbol calculation analogous to that leading to \eqref{B_s_remainder}, we obtain
		\begin{equation}
			\label{B_s_commutator}
			\| (B_s^{-1})^* TS [U',G_s] G_s^{-1} (TS)^{-1} B_s^{-1} \|_{L^2 \to L^2} \lesssim \epsilon.
		\end{equation}
		We also need the \(B_s\)-weighted analogue of Lemma \ref{lem comparison}. More precisely, under the assumptions of that lemma,
		\begin{equation}
			\label{B_s_comparison}
			\| B_s \mathcal{M} f \|_{L^2} \le (1 + O(\epsilon)) \| B_s \mathcal{N} f \|_{L^2}.
		\end{equation}
		Indeed, after conjugating by \(V\), the symbol \(B_s \mathcal{M} \mathcal{N}^{-1} B_s^{-1}\) is
		\begin{align*}
			\sigma (V^{-1} B_s \mathcal{M} \mathcal{N}^{-1} B_s^{-1} V) &= (2 U'_{\sup} - U'(y))^{1/2} \# b(\zeta) \# \frac{m}{n} (\eta) \# b(\zeta)^{-1} \# (2 U'_{\sup} - U'(y))^{-1/2} \\
			&= (2 U'_{\sup} - U'(y))^{1/2} \# \frac{m}{n} (\eta) \# (2 U'_{\sup} - U'(y))^{-1/2}.
		\end{align*}
		The same argument as in the proof of Lemma \ref{lem comparison} therefore gives \eqref{B_s_comparison}.
		
		Thus, combining \eqref{B_s_commutator} and \eqref{B_s_comparison}, we obtain
		\begin{align*}
			\re \langle TS 2ik [U',G_s] D_t \Delta_t^{-1} A, TSG_sA \rangle &\le \| (B_s^{-1})^* TS [U',G_s] G_s^{-1} (TS)^{-1} B_s^{-1} \|_{L^2 \to L^2} \\
			&\quad \times \| B_s 2k D_t \Delta_t^{-1} TSG_sA \|_{L^2} \| B_s TSG_sA \|_{L^2} \\
			&\lesssim \epsilon \| B_s TSG_sA \|_{L^2}^2.
		\end{align*}
		
		All the remaining terms in the second class are treated in the same way. Their noncommutator parts are controlled by the corresponding estimates in the proof of Proposition \ref{prop basic_energy}. In each commutator part, we first isolate either \([U',G_s]G_s^{-1}\) or \([U'',G_s]G_s^{-1}\), and estimate the remaining operator factors using the corresponding \(B_s\)-weighted bounds, which follow from the same symbolic arguments as above. Hence, all the commutator errors are absorbed by the \(B_s\)-dissipation, and we omit the details.
		
		\noindent\textbf{Step 4. Contributions from the cross term.} Finally, we consider the terms generated when the time derivative falls on \(G_s\) in the cross part of the energy. We take \(\frac{1}{3} \re \langle PTSG_sA, PTS \partial_t G_s R \rangle\) as an example. Inserting \(U'^{-1/2} U'^{1/2}\) into the inner product and applying the Cauchy--Schwarz inequality, we obtain
		\begin{align*}
			\frac{1}{3} \re \langle PTSG_sA, PTS \partial_t G_s R \rangle \le& \frac{1}{3} \| U'^{-1/2} P^*P Q^{-1} \|_{L^2 \to L^2} \| QTSG_sA \|_{L^2} \\
			&\times \| U'^{1/2} TS (\partial_t G_s) G_s^{-1} (TS)^{-1} U'^{-1/2} \|_{L^2 \to L^2} \| U'^{1/2} TSG_sR \|_{L^2}.
		\end{align*}
		The symbol computation in the proof of Lemma \ref{lem P_Q_bound} gives
		\[\| U'^{-1/2} P^*P Q^{-1} \|_{L^2 \to L^2} \le (1 + O(\epsilon)) \frac{\sqrt{\beta}}{\gamma}.\]
		Commuting the multiplication operator \(U'^{-1/2}\) through the middle operator produces only a remainder of size \(O(\epsilon)\). Thus, \eqref{TS_G} and \eqref{widetilde_r_G} imply
		\begin{align*}
			\| U'^{1/2} TS (\partial_t G_s) G_s^{-1} (TS)^{-1} U'^{-1/2} \|_{L^2 \to L^2} &\le (1 + O(\epsilon)) \| \op (p_G) \|_{L^2 \to L^2} \\
			&\le (1 + O(\epsilon)) \| 2 U'_{\sup} - U'(y) \|_{L^\infty} \left\| g' \left( \frac{\zeta}{k} \right) \left( g \left( \frac{\zeta}{k} \right) \right)^{-1} \right\|_{L^\infty} \\
			&\le 2 (1 + O(\epsilon)) U'_{\sup},
		\end{align*}
		where we have used
		\[0 < - \frac{g'(\xi)}{g(\xi)} = \frac{\langle \xi \rangle - \xi }{\langle \xi \rangle(1 + \langle \xi \rangle - \xi)} \le 1.\]
		The definition of \(Q\) gives
		\[\| (U')^{1/2}TSG_sR \|_{L^2} = \beta^{1/2} \| Q |k|^{-1} \sqrt{- \Delta_t} TSG_sR \|_{L^2}.\]
		Arguing as in the proof of \eqref{Pythagoras_P}, and using Lemma \ref{lem comparison} together with the parallelogram identity, we obtain
		\[\| Q |k|^{-1} \sqrt{- \Delta_t} TSG_sR \|_{L^2} \le (1 + O(\epsilon)) (\| QTSkG_sR \|_{L^2}^2 + \| QTSD_tG_sR \|_{L^2}^2)^{1/2}.\]
		Combining these estimates and using Young's inequality, we conclude that
		\begin{align*}
			\frac{1}{3} \re \langle PTSG_sA, PTS \partial_t G_s R \rangle \le& \left( \frac{\beta M U'_{\sup}}{2} + O(\epsilon) \right) \| QTSG_sA \|_{L^2}^2 \\
			&+ \left( \frac{\beta M U'_{\sup}}{2} + O(\epsilon) \right) \frac{1}{M^2} \| QTSkG_sR \|_{L^2}^2 + \left( \frac{\beta M U'_{\sup}}{2} + O(\epsilon) \right) \frac{1}{M^2} \| QTSD_tG_sR \|_{L^2}^2.
		\end{align*}
		By the definition of \(\beta\) and the assumed smallness of \(\epsilon\), this contribution is absorbed by the \(Q\)-dissipation arising from differentiating \(TS\).
		
		Combining all the preceding estimates yields
		\[\frac{d}{dt} \widetilde{E}_{1,s}(t) \le 0,\]
		which proves the proposition.
	\end{proof}
	
	\subsection{The lower-order weighted energy estimate}
	
	We next introduce the weight used in the lower-order energy estimate. Let
	\[h(\xi) = 1 + \xi \langle \xi \rangle^{-1}.\]
	It satisfies
	\[h(\xi) \approx
	\begin{cases}
		\langle \xi \rangle^{-2}, &\qquad \xi \le 0, \\
		1, &\qquad \xi \ge 0,
	\end{cases}
	\qquad h'(\xi) = \langle \xi \rangle^{-3} > 0.\]
	For each terminal time \(s \ge 0\), define
	\[H_s = V \op \left( h \left( \frac{\eta}{k} + \frac{1}{2} U'_{\inf} (t - s) \right) \right) V^{-1}.\]
	The corresponding lower-order energy estimate is stated as follows.
	
	\begin{proposition}
		\label{prop lower}
		Let \(s \ge 0\) be fixed, and let \(H_s\) be defined as above. For any solution \((R, A, \Theta)\) of \eqref{R_A_Theta}, define
		\begin{align*}
			\widetilde{E}_{-2, s} =& \frac{1}{2M^2} \| TSkH_sR \|_{L^2}^2 + \frac{1}{2M^2} \| TSD_tH_sR \|_{L^2}^2 + \frac{1}{2} \| TSH_sA \|_{L^2}^2 + \frac{1}{2} \| TSH_s \Theta \|_{L^2}^2 \\
			&+ \frac{1}{2} \| TS k (- \Delta_t)^{-1/2} (H_s \Theta - U' H_s R) \|_{L^2}^2 + \frac{1}{3} \re \langle PTSH_sA, PTSH_sR \rangle.
		\end{align*}
		Then
		\[\widetilde{E}_{-2, s} \approx \frac{1}{M^2} \| TS\sqrt{-\Delta_t}H_sR \|_{L^2}^2 + \| TSH_sA \|_{L^2}^2 + \| TSH_s \Theta \|_{L^2}^2 + \| TS k (-\Delta_t)^{-1/2} H_s (\Theta - U'R) \|_{L^2}^2,\]
		where the equivalence constants are uniform for \(0 \le t \le s\). Moreover,
		\[\frac{d}{dt} \widetilde{E}_{-2, s} (t) \le 0, \qquad 0 \le t \le s.\]
		In particular,
		\[\widetilde{E}_{-2, s} (s) \le \widetilde{E}_{-2, s} (0).\]
	\end{proposition}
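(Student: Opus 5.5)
The argument will follow the proof of Proposition~\ref{prop higher} step by step, with \(G_s\) and \(g\) replaced by \(H_s\) and \(h\). Set \(\zeta = \eta + \frac{1}{2} k U'_{\inf} (t-s)\).

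\textbf{Energy equivalence.} Apply \eqref{equivalent_energy} to \((H_sR, H_sA, H_s\Theta)\). This gives the equivalence with \(H_s\Theta - U'H_sR\) in the last slot. Next write
\[H_s\Theta - U'H_sR = H_s(\Theta - U'R) + [H_s, U']R,\]
and show by the composition formula \eqref{composition_formula} and Taylor's theorem that the symbol of \(V^{-1}[H_s,U']H_s^{-1}V\) and its derivatives are \(O(\epsilon)\).

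Here one point differs from the \(g\) case. The function \(h\) decays like \(\langle\xi\rangle^{-2}\) for \(\xi \le 0\), so the quotient \(h(\xi+\theta\lambda/k)/h(\xi)\) is no longer uniformly bounded. Instead it obeys a Peetre-type bound \(\lesssim \langle\lambda\rangle^2\). Since \(h^{(m)} \lesssim \langle\xi\rangle^{-3} \lesssim h'\) and \(h' \lesssim h\) (indeed \(h'/h \le 1\) in both regions), the extra polynomial growth in \(\lambda\) is absorbed by further integrations by parts in \(z\), as was done with Claim~\ref{cla sinh}. The commutator is then absorbed into the acoustic part for small \(\epsilon\).

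\textbf{Step 1: principal dissipation.} Compute
\[\partial_t H_s = ikV[U, \op(h(\zeta/k))]V^{-1} + \tfrac12 U'_{\inf} V\op(h'(\zeta/k))V^{-1},\]
so that
\[\sigma\big(V^{-1}(\partial_t H_s)H_s^{-1}V\big) = p_H + r_H, \qquad p_H = \big(\tfrac12 U'_{\inf} - U'(y)\big)\, h'(\zeta/k)\, h(\zeta/k)^{-1}.\]
Since \(\tfrac12 U'_{\inf} - U' \le -\tfrac12 U'_{\inf}\), we get \(p_H \le -\tfrac12 U'_{\inf} h'/h < 0\).

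The remainder \(r_H\) contains \(h''\) and \(U''\). Using \(|h^{(m)}(\xi+\theta\lambda/k)| \lesssim h'(\xi)\langle\lambda\rangle^3\), it satisfies \(|\partial^j_y\partial^l_\zeta r_H| \lesssim -\epsilon p_H\). Conjugating by \(TS\) costs only \(\langle\lambda\rangle^3\) by Claim~\ref{cla sinh}, so we obtain the analogues of \eqref{TS_G} and \eqref{widetilde_r_G}.

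Define
\[C_s = V\op\Big(\big(U'(y) - \tfrac12 U'_{\inf}\big)^{1/2} c(\zeta)\Big)V^{-1}, \qquad c = \sqrt{h'/h}.\]
For \(\xi \le 0\) this gives \(c \approx \langle\xi\rangle^{-1/2}\), and for \(\xi \ge 0\) it gives \(c \approx \langle\xi\rangle^{-3/2}\). In both regions \(c(\zeta+\theta\lambda) \lesssim c(\zeta)\langle\lambda\rangle^{3/2}\). Hence \(C_s^*C_s = -p_H + r_C\) with \(|r_C| \lesssim -\epsilon p_H\). The \(C_s\)-normalized Calder\'on--Vaillancourt argument then yields
\[\re\langle TS(\partial_t H_s)A, TSH_sA\rangle \le (-1+O(\epsilon))\|C_sTSH_sA\|^2.\]

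\textbf{Steps 2 and 3: remaining terms.} Treat the \(D_tH_sR\) term via \([D_t, \partial_t H_s] = ik[U', H_s]\). Treat the \(\Xi\)-type term by splitting off \([(\partial_t H_s)H_s^{-1}, U']\). Then handle all commutator errors \([U',H_s]H_s^{-1}\) and \([U'',H_s]H_s^{-1}\) that arise when the time derivative falls on the unknowns. For these I need the \(C_s\)-normalized analogues of \eqref{B_s_commutator} and \eqref{B_s_comparison}. Both follow from
\[|\partial_\zeta h(\zeta+\theta\lambda)|/h(\zeta) \lesssim c(\zeta)^2\langle\lambda\rangle^3.\]
All these errors are then \(O(\epsilon)\) times the \(C_s\)-dissipation.

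\textbf{Step 4: cross term.} When the time derivative falls on \(H_s\) in the cross term, bound it as in Proposition~\ref{prop higher}. The key estimate is
\[\|p_H\|_{L^\infty} \le U'_{\sup}\sup h'/h \lesssim U'_{\sup},\]
using \(h'/h \le 1\). This yields a contribution of size \(O(\beta MU'_{\sup})\) in the \(Q\)-norms, which the \(Q\)-dissipation from Proposition~\ref{prop basic_energy} absorbs by the choice of \(\beta\).

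\textbf{Main obstacle.} The hardest part is the symbol bookkeeping in Steps 1 and 3. Because \(h\) is not uniformly bounded below, every ratio \(h(\cdot+\lambda)/h(\cdot)\) and \(h'(\cdot+\lambda)/h'(\cdot)\) must be controlled with polynomial loss in \(\lambda\) only. The \(\epsilon\)-remainders must also be shown to be dominated by \(h'/h\) rather than by a constant. I would first verify these Peetre-type inequalities explicitly, since the rest is then a verbatim transcription of the \(G_s\) argument.
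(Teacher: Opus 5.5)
Your proposal is correct and follows the paper's proof, which likewise reduces everything to the \(G_s\) argument using the sign \(p_H \le -\tfrac12 U'_{\inf}\, h'/h\), the relative bounds \(|\partial_\xi^m(h'/h)| \lesssim h'/h\), and Peetre's inequality; you write out the dissipative operator \(C_s\) and the polynomial-loss bounds that the paper leaves implicit. One small correction: \(h'/h \le 1\) is false (its maximum is \(3\sqrt{3}/4\), attained at \(\xi=-1/\sqrt{3}\)), but only \(h'/h \lesssim 1\) with an absolute constant is needed, so \(\|\op(p_H)\|_{L^2\to L^2} \le \tfrac{3\sqrt{3}}{4}\, U'_{\sup}\) and the absorption of the cross-term contribution through the choice of \(\beta\) are unaffected.
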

	
	\begin{proof}
		The proof is the same as that of Proposition \ref{prop higher}, with \(G_s\) and \(g\) replaced by \(H_s\) and \(h\), respectively. Indeed, the principal symbol of \((\partial_t H_s) H_s^{-1}\) is
		\[p_H (y, \eta) = \left( \frac{1}{2} U'_{\inf} - U'(y) \right) h' \left( \frac{\eta}{k} + \frac{1}{2} U'_{\inf} (t-s) \right) h \left( \frac{\eta}{k} + \frac{1}{2} U'_{\inf} (t-s) \right)^{-1}.\]
		Since \(U' \ge U'_{\inf}\) and \(h' > 0\), we have
		\[p_H(y,\eta) \le -\frac{U'_{\inf}}{2} h' \left( \frac{\eta}{k} + \frac{1}{2} U'_{\inf} (t-s) \right) h \left( \frac{\eta}{k} + \frac{1}{2} U'_{\inf} (t-s) \right)^{-1} < 0.\]
		Moreover,
		\[\left| \frac{h'(\xi)}{h(\xi)} \right| = \left| \frac{\langle \xi \rangle - \xi}{\langle \xi \rangle^2} \right| \lesssim 1, \qquad \left| \partial_\xi^m \frac{h'(\xi)} {h(\xi)} \right| \lesssim \frac{h'(\xi)}{h(\xi)},\qquad m \ge 0.\]
		Together with Peetre's inequality, these estimates provide precisely the relative symbol bounds used in the proof of Proposition \ref{prop higher}. Hence, the remainder, commutator, and cross-term estimates follow from the same arguments, and the proposition is proved.
	\end{proof}
	
	\section{Proof of the main theorem}
	\label{sec7}
	
	In this section, we combine the weighted energy estimates with propagator and mixing bounds to prove Theorem \ref{thm main}. The acoustic growth bounds follow from the terminal-time energies, whereas the solenoidal damping estimates additionally require propagator and two-time mixing bounds.
	
	\subsection{Propagator estimates}
	
	To obtain the desired decay bound for \(v_{\mathrm{sol}}\), it suffices to estimate \(W\). Rather than applying Duhamel's formula directly to \(W\), we apply it to the modified variable
	\[\Theta = W + L(t) R,\qquad L(t) = U' + U'' D_t \Delta_t^{-1},\]
	and then recover \(W\) from this relation. It follows from \eqref{R_A_Theta} that \(\Theta\) satisfies
	\begin{equation}
		\label{Theta}
		\partial_t \Theta = ikU'' \Delta_t^{-1} \Theta + 2 ik^3 U'' \Delta_t^{-1} U' \Delta_t^{-1} R.
	\end{equation}
	Let \(\Phi(t,s)\) denote the evolution propagator associated with the homogeneous equation, that is,
	\[\partial_t \Phi(t,s) = ikU'' \Delta_t^{-1} \Phi(t,s), \qquad \Phi(s,s) = I.\]
	We record the following boundedness properties of \(\Phi(t,s)\).
	
	\begin{proposition}
		\label{prop Phi}
		Let \(\Phi(t,s)\) be defined as above. Then, for every \(0 \le r \le 2\),
		\begin{equation}
			\label{Phi_bound}
			\| \Phi(t,s) \|_{\widetilde{H}^r \to \widetilde{H}^r} \lesssim 1,
		\end{equation}
		uniformly for all \(0\le s \le t\). Recall that \(\widetilde H^r\) denotes the \(k\)-scaled Sobolev space introduced in Subsection \ref{subsec notations}, with norm
		\[\| f \|_{\widetilde{H}^r} = \left\| \op \left( \left\langle \frac{\eta}{k} \right\rangle^r \right) f \right\|_{L^2}.\]
		Moreover, for every \(0 \le r \le 1\) and every \(f\) such that \((- \Delta_s)^r f \in L^2\),
		\begin{equation}
			\label{Phi_adapted}
			\| (-\Delta_s)^r \Phi(t,s) f \|_{L^2} \lesssim \| (-\Delta_s)^r f \|_{L^2}.
		\end{equation}
		Here the fractional powers are defined by
		\[(- \Delta_t)^r = e^{ikUt} (- \Delta_k)^r e^{-ikUt}, \qquad r \in \mathbb{R},\]
		where \((-\Delta_k)^r\) denotes the Fourier multiplier with symbol \((k^2 + \eta^2)^r\).
	\end{proposition}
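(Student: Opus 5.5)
The plan is to treat $\Phi(t,s)$ by weighted energy estimates of the same type as in Sections~\ref{sec5}--\ref{sec6}. Estimating the operator norm of the generator $ikU''\Delta_t^{-1}$ and applying Gronwall directly does not work. On $\widetilde H^r$ that norm is only $O(\epsilon)$, uniformly in $t$, but it is not integrable in time. So Gronwall would give exponential growth $e^{C\epsilon t}$.

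\textbf{The $L^2$ case.} The key observation is that $ikU''\Delta_t^{-1}$ lives exactly at the scale of the lower-order dissipation generated by $S$. For $\theta(t)=\Phi(t,s)f$, I would differentiate $\|S\theta\|_{L^2}^2$. By the part of Lemma~\ref{lem P_Q_TS} concerning the $\tan^{-1}$ factor (the same computation with $T$ removed), the weight derivative produces $-\|QS\theta\|_{L^2}^2$ up to $O(\epsilon)\|QS\theta\|_{L^2}^2$. For the forcing term, I would argue as in the bound for $\re\langle TSikU''\Delta_t^{-1}\Theta,TS\Theta\rangle$ in Step~3 of the proof of Proposition~\ref{prop basic_energy}, namely the normalized estimate $\|(Q^*)^{-1}SU''S^{-1}k^2(-\Delta_t)^{-1}Q^{-1}\|\lesssim\epsilon$. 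This gives $\re\langle S\,ikU''\Delta_t^{-1}\theta,S\theta\rangle\lesssim\epsilon\|QS\theta\|_{L^2}^2$. Hence $\|S\theta\|_{L^2}$ is nonincreasing on $[s,\infty)$. Since $S$ is bounded above and below, this yields \eqref{Phi_bound} for $r=0$.

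\textbf{The adapted bound \eqref{Phi_adapted}.} Next I would add a weight of prescribed order at time $s$, chosen to commute with $D_t$. For \eqref{Phi_adapted}, I would differentiate $\|S\,\mathcal W(t)\theta\|_{L^2}^2$ with
\[
\mathcal W(t)=V\op\big(w(\tfrac{\eta}{k}+c(t-s))\big)V^{-1}.
\]
Here $w$ is comparable to $\langle\cdot\rangle^{2r}$, and the constant $c$ is chosen so that $\mathcal W(s)$ is comparable to $k^{-2r}(-\Delta_s)^r$. Since the coefficient of the time derivative is $c-U'(y)$, I would split $w=w_+w_-$ with $w_-=g^{2r}$ (order $2r$ for negative frequencies, $w_-'<0$) and $w_+$ of order $2r$ for positive frequencies with $w_+'>0$. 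The two factors would be used with two separate translations, $c=2U'_{\sup}$ for the $g$-part and $c=\tfrac12U'_{\inf}$ for the increasing part, exactly as for $G_s$ and $H_s$. Each factor then has $\partial_t$ of the weight with favorable sign at the principal-symbol level. The commutators $[U'',\mathcal W]\mathcal W^{-1}$ and the remainders are $O(\epsilon)$ relative to the $Q$- and $B_s$-dissipation, as in \eqref{B_s_commutator} and \eqref{r_G}. This uses Peetre's inequality and Claim~\ref{cla sinh}.

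\textbf{The $\widetilde H^r$ bound.} For \eqref{Phi_bound} with $0<r\le2$, I would run the same argument with the translation parameter $s$ replaced by $0$-type weights. Equivalently, I would conjugate back by $V$ and use a weight that equals $\langle\partial_y/k\rangle^r$ up to $O(\epsilon)$ errors. Concretely, write $\partial_y=D_t+ikU't$. In the $z=U(y)$ picture the $\widetilde H^r$ norm is comparable to $\langle D_t/(ik)+U't\rangle^r$, and since $U'$ is slowly varying this can be compared, via Lemma~\ref{lem comparison}-type symbol arguments, with a conjugated multiplier $\langle \eta/k+\bar c\,t\rangle^r$ on regions where $U'\approx\bar c$. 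Interpolation between $r=0$ and $r=2$ then handles intermediate $r$.

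\textbf{Main obstacle.} I expect the main difficulty to be this last step. The exact $\widetilde H^r$ norm is not a $V$-conjugated Fourier multiplier unless $U$ is linear, so it does not commute with $D_t$. Its comparison with the monotone terminal-time weights must absorb the mismatch between $U'(y)t$ and a constant translation. What I would try first is to prove the $r=2$ case directly. I would differentiate $\|\langle\partial_y/k\rangle^2\theta\|_{L^2}^2$ by commuting $\partial_y^2$ through $U''\Delta_t^{-1}$, which produces terms like $U'''\Delta_t^{-1}$ and $U''\partial_y\Delta_t^{-1}$. I would then control $\partial_y\Delta_t^{-1}=(D_t+ikU't)\Delta_t^{-1}$ using the $k^2(-\Delta_t)^{-1}$-type dissipation of $S$, together with the $\langle t\rangle$-gain coming from the mixing of $\Delta_t^{-1}$ on smooth data. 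Whether the factor $t$ in $ikU't\Delta_t^{-1}$ can be absorbed by this dissipation, without a loss, is the point I am least sure of.
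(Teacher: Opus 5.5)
Your $L^2$ step is the same as the paper's Step~1. Neither of the other two parts closes as proposed.

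\textbf{The adapted bound.} A weight $w\big(\tfrac{D_t}{ik}+c(t-s)\big)$ with constant $c$ agrees with $k^{-2r}(-\Delta_s)^r$ only at $t=s$. But \eqref{Phi_adapted} needs the fixed operator $(-\Delta_s)^r$ at the \emph{final} time. Since $\tfrac{D_s}{ik}=\tfrac{D_t}{ik}+U'(y)(t-s)$, in the $D_t$-calculus this operator is $k^{2r}\langle \tfrac{D_t}{ik}+U'(y)(t-s)\rangle^{2r}$. That is a shift by the variable coefficient $U'(y)(t-s)$, which no $V$-conjugated multiplier reproduces.

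Your product weight is in fact strictly weaker there. Take frequencies with $\tfrac{\eta}{k}+U'(t-s)\approx -N$ and $1\ll N\le U'_{\sup}(t-s)$. The $g^{2r}$ factor is then evaluated at a nonnegative argument and the increasing factor at a negative one, so the weight is $O(1)$ instead of $N^{2r}$. Hence monotonicity of $\|S\mathcal W\theta\|_{L^2}$ does not give \eqref{Phi_adapted}. Reversing the direction (shift vanishing at the final time) costs powers of $t-s$ at time $s$, just as the initial energies in Section~\ref{sec7} carry a factor $\langle s\rangle^{2}$. A lossless propagator bound cannot afford that.

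What you are missing is that \eqref{Phi_adapted} is equivalent to \eqref{Phi_bound}. From $\Delta_{s+\tau}^{-1}=e^{ikUs}\Delta_\tau^{-1}e^{-ikUs}$ and $[U'',e^{ikUs}]=0$ you get $\Phi(t,s)=e^{ikUs}\Phi(t-s,0)e^{-ikUs}$. Therefore $\|(-\Delta_s)^r\Phi(t,s)f\|_{L^2}=|k|^{2r}\|\Phi(t-s,0)e^{-ikUs}f\|_{\widetilde H^{2r}}$, and everything reduces to the $\widetilde H^r$ bound.

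\textbf{The $\widetilde H^r$ bound.} Your worry about the factor $t$ is justified. Since $[\partial_y,D_t]=-iktU''$,
\[
[\partial_y,U''\Delta_t^{-1}]=U'''\Delta_t^{-1}+ikt\,U''\Delta_t^{-1}(2U''D_t+U''')\Delta_t^{-1},
\]
and relative to the $Q$-dissipation this grows like $\epsilon^2t/k^2$. Using mixing at the same regularity only reduces it to $O(\epsilon^2)$, which is not integrable, so Gronwall gives $e^{C\epsilon^2t}$. Making it integrable costs an extra derivative. Local comparison with $\langle \eta/k+\bar c\,t\rangle^r$ also fails, because $(U'-\bar c)t$ is unbounded in $t$.

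The missing idea is to differentiate with the weighted vector field $\tfrac{1}{U'}\partial_y$, i.e.\ $\partial_z$ for $z=U(y)$. Because $\tfrac{1}{U'}\partial_y=\tfrac{1}{U'}D_t+ikt$ and $ikt$ is a scalar, it drops out of every commutator:
\[
\Big[\tfrac{1}{U'}\partial_y,U''\Delta_t^{-1}\Big]=U''\Delta_t^{-1}\Big(2\big(\tfrac{1}{U'}\big)'D_t^2+\big(\tfrac{1}{U'}\big)''D_t\Big)\Delta_t^{-1}+\tfrac{U'''}{U'}\Delta_t^{-1}.
\]
This operator is $t$-free in the $D_t$-calculus, of order $-2$, with $O(\epsilon)$ coefficients.

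Set $Y=\tfrac{1}{U'}\tfrac{\partial_y}{k}Z$. Then $\partial_tY=ikU''\Delta_t^{-1}Y+i[\tfrac{1}{U'}\partial_y,U''\Delta_t^{-1}]Z$, and the forcing is $O(\epsilon)$ relative to $Q$. So $\|SY\|_{L^2}^2+\|SZ\|_{L^2}^2$ is nonincreasing by exactly your $L^2$ argument. Also $\|Y\|_{L^2}\approx\|\tfrac{\partial_y}{k}Z\|_{L^2}$ by the pointwise bounds on $U'$.

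Applying the vector field twice gives $r=2$, since the double commutator is again $t$-free, of order $-2$, and $O(\epsilon)$. Interpolation then gives $0\le r\le 2$. No terminal-time weights are needed anywhere in this proposition.
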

	
	\begin{proof}
		Let \(Z \in \widetilde{H}^r\) solve the homogeneous equation associated with \eqref{Theta}, namely,
		\begin{equation}
			\label{Z}
			\partial_t Z=ikU'' \Delta_t^{-1} Z.
		\end{equation}
		We shall prove that
		\[\| Z(t) \|_{\widetilde{H}^r} \lesssim \| Z(s) \|_{\widetilde{H}^r}, \qquad 0 \le s \le t.\]
		
		\noindent\textbf{Step 1. The \(L^2\) estimate.} For the case \(r = 0\), we consider the energy \(\frac{1}{2} \| SZ \|_{L^2}^2\). Its time derivative is
		\[\frac{d}{dt} \frac{1}{2} \| SZ \|_{L^2}^2 = \re \langle S ikU'' \Delta_t^{-1} Z, SZ \rangle + \re \langle \partial_t S Z, SZ \rangle.\]
		As in the proof of Lemma \ref{lem P_Q_TS}, we have
		\[\re \langle \partial_t S Z, SZ \rangle \le (-1 + O(\epsilon)) \| QSZ \|_{L^2}^2.\]
		Repeating the symbolic argument in the proof of Lemma \ref{lem TS_commutator}, with \(TS\) replaced by \(S\), we obtain
		\[|\re \langle SikU'' \Delta_t^{-1}Z,SZ \rangle - \re \langle ikU'' \Delta_t^{-1} SZ,SZ \rangle| \lesssim \epsilon \| QSZ \|_{L^2}^2.\]
		It therefore suffices to estimate \(\re \langle ikU'' \Delta_t^{-1} SZ, SZ \rangle\). By the composition formula \eqref{composition_formula} and the smallness assumption on \(U''\), the symbol of \((Q^{-1})^* ik U'' \Delta_t^{-1} Q^{-1}\) and all its derivatives required by the Calder\'on--Vaillancourt Theorem \ref{thm CV} are bounded by \(O(\epsilon)\). Therefore,
		\[\re \langle ikU''\Delta_t^{-1} SZ, SZ \rangle \lesssim \epsilon \| QSZ \|_{L^2}^2.\]
		Thus
		\[\frac{d}{dt} \frac{1}{2} \| SZ \|_{L^2}^2 \le 0.\]
		Since \(V^{-1}SV\) is a Fourier multiplier whose symbol is uniformly bounded above and below, and \(V\) is unitary, it follows that
		\[\| Z(t) \|_{L^2} \approx \| SZ(t) \|_{L^2} \le \| SZ(s) \|_{L^2} \approx \| Z(s) \|_{L^2}.\]
		This proves the desired estimate for \(r = 0\).
		
		\noindent\textbf{Step 2. Estimates for one adapted derivative.} We now consider the case \(r=1\). For convenience, set
		\[Y = \frac{1}{U'} \frac{\partial_y}{k} Z.\]
		Then \(Y\) satisfies
		\[\partial_t Y = ikU'' \Delta_t^{-1} Y + i \left[ \frac{1}{U'} \partial_y, U'' \Delta_t^{-1} \right] Z.\]
		We next expand the commutator explicitly and determine its order. Since
		\[\left[ \frac{1}{U'} \partial_y, D_t \right] = \left[ \frac{1}{U'} D_t + ikt, D_t \right] = \left[ \frac{1}{U'} D_t, D_t \right] = -\left( \frac{1}{U'} \right)' D_t,\]
		we have
		\[\left[ \frac{1}{U'} \partial_y, \Delta_t \right] = \left[ \frac{1}{U'} \partial_y, -k^2 + D_t^2 \right] = D_t \left[ \frac{1}{U'} \partial_y, D_t \right] + \left[ \frac{1}{U'} \partial_y, D_t \right] D_t = -2 \left( \frac{1}{U'} \right)' D_t^2 - \left( \frac{1}{U'} \right)''D_t.\]
		Consequently,
		\begin{align*}
			\left[ \frac{1}{U'} \partial_y, U'' \Delta_t^{-1} \right] &= U'' \left[ \frac{1}{U'} \partial_y, \Delta_t^{-1} \right] + \left[ \frac{1}{U'} \partial_y, U'' \right] \Delta_t^{-1} \\
			&= - U'' \Delta_t^{-1} \left[ \frac{1}{U'} \partial_y, \Delta_t \right] \Delta_t^{-1} + \frac{U'''}{U'} \Delta_t^{-1} \\
			&= U'' \Delta_t^{-1} \left( 2 \left(\frac{1}{U'}\right)' D_t^2 + \left( \frac{1}{U'} \right)'' D_t \right) \Delta_t^{-1} + \frac{U'''}{U'} \Delta_t^{-1}
		\end{align*}
		By the asymptotic expansion \eqref{composition_expansion}, the principal symbol of \(V^{-1} \left[ \frac{1}{U'} \partial_y, U'' \Delta_t^{-1} \right] V\) is given by
		\[- 2U''(y) \left( \frac{1}{U'(y)} \right)' \frac{\eta^2}{(k^2 + \eta^2)^2} - \frac{U'''(y)}{U'(y)} \frac{1}{k^2 + \eta^2}.\]
		It follows that the commutator has order \(-2\), the same as \(\Delta_t^{-1}\), and its principal symbol carries an additional factor of size \(O(\epsilon)\). Therefore, we obtain
		\begin{equation}
			\label{commutator_bound}
			\left\| (Q^{-1})^* \left[ \frac{1}{U'} \partial_y, U'' \Delta_t^{-1} \right] Q^{-1} \right\|_{L^2 \to L^2} \lesssim \epsilon.
		\end{equation}
		
		We consider the energy \(\frac{1}{2} \| SY \|_{L^2}^2 + \frac{1}{2} \| SZ \|_{L^2}^2\). Using the estimate established in the case \(r = 0\), we obtain
		\begin{align*}
			\frac{d}{dt} \left( \frac{1}{2} \| SY \|_{L^2}^2 + \frac{1}{2} \| SZ \|_{L^2}^2 \right) =& \re \langle S ikU'' \Delta_t^{-1} Y, SY \rangle + \re \langle \partial_t S Y, SY \rangle + \re \left\langle S i \left[ \frac{1}{U'} \partial_y, U'' \Delta_t^{-1} \right] Z, SY \right\rangle \\
			&+ \re \langle S ikU'' \Delta_t^{-1} Z, SZ \rangle + \re \langle \partial_t S Z, SZ \rangle \\
			\le& (-1 + O(\epsilon)) \| QSY \|_{L^2}^2 + (-1 + O(\epsilon)) \| QSZ \|_{L^2}^2 + \re \left\langle S i \left[ \frac{1}{U'} \partial_y, U'' \Delta_t^{-1} \right] Z, SY \right\rangle
		\end{align*}
		The operator norm bound \eqref{commutator_bound} implies
		\begin{align*}
			\re \left\langle S i \left[ \frac{1}{U'} \partial_y, U'' \Delta_t^{-1} \right] Z, SY \right\rangle &\le \left\| (Q^{-1})^* \left[ \frac{1}{U'} \partial_y, U'' \Delta_t^{-1} \right] Q^{-1} \right\|_{L^2 \to L^2} \| QZ \|_{L^2} \| QS^*SY \|_{L^2} \\
			&\lesssim \epsilon \| QZ \|_{L^2} \| QS^*SY \|_{L^2}.
		\end{align*}
		Applying Lemma \ref{lem comparison}, together with the uniform upper and lower bounds for \(\sigma (V^{-1}SV)\), we obtain
		\[\| QZ \|_{L^2} \lesssim \| QSZ \|_{L^2}, \qquad \| QS^*SY \|_{L^2} \lesssim \| QSY \|_{L^2}.\]
		Young's inequality therefore yields
		\[\frac{d}{dt} \left( \frac{1}{2} \| SY \|_{L^2}^2 + \frac{1}{2} \| SZ \|_{L^2}^2 \right) \le 0.\]
		
		Using again the uniform upper and lower bounds for \(\sigma(V^{-1} S V)\) together with \(0 < U'_{\inf} \le U' \le U'_{\sup} < \infty\), we obtain
		\[\| SY \|_{L^2}^2 + \| SZ \|_{L^2}^2 \approx \| Y \|_{L^2}^2 + \| Z \|_{L^2}^2 = \left\| \frac{1}{U'} \frac{\partial_y}{k} Z \right\|_{L^2}^2 + \| Z \|_{L^2}^2 \approx \left\| \frac{\partial_y}{k} Z \right\|_{L^2}^2 + \| Z \|_{L^2}^2 = \| Z \|_{\widetilde{H}^1}^2.\]
		Combining this equivalence with the monotonicity of the energy proves the desired estimate for \(r = 1\).
		
		\noindent\textbf{Step 3. Two adapted derivatives and interpolation.} The case \(r = 2\) follows from the same argument. Indeed, applying \(\frac{1}{U'} \frac{\partial_y}{k}\) once more gives
		\[\partial_t \left( \frac{1}{U'} \frac{\partial_y}{k} \right)^2 Z = ikU''\Delta_t^{-1} \left( \frac{1}{U'} \frac{\partial_y}{k} \right)^2 Z + 2i \left[ \frac{1}{U'} \partial_y, U''\Delta_t^{-1} \right] \left( \frac{1}{U'} \frac{\partial_y}{k} Z \right) + \frac{i}{k} \left[ \frac{1}{U'} \partial_y, \left[ \frac{1}{U'} \partial_y, U''\Delta_t^{-1} \right] \right] Z.\]
		To determine the orders of the commutators, we write \(\frac{1}{U'} \partial_y = \frac{1}{U'} D_t + ikt\). Since \(ikt\) is a scalar operator, it does not contribute to any commutator. Consequently, both commutators can be expressed entirely in terms of \(D_t\) and multiplication operators depending on \(y\). Using the asymptotic expansion for commutators \eqref{commutator_expansion}, we find that both commutators are of order \(-2\), the same as \(\Delta_t^{-1}\), and their symbols carry a factor of size \(O(\epsilon)\). Repeating the preceding energy argument, we obtain
		\[\frac{d}{dt} \frac{1}{2} \sum_{j = 0}^2 \left\| S \left( \frac{1}{U'} \frac{\partial_y}{k} \right)^j Z \right\|_{L^2}^2 \le 0.\]
		Moreover,
		\[\sum_{j = 0}^2 \left\| S \left( \frac{1}{U'} \frac{\partial_y}{k} \right)^j Z \right\|_{L^2}^2 \approx \| Z \|_{\widetilde{H}^2}^2.\]
		Thus,
		\[\| Z(t) \|_{\widetilde{H}^2} \lesssim \| Z(s) \|_{\widetilde{H}^2}.\]
		
		Interpolating between the \(L^2\) and \(\widetilde{H}^2\) estimates yields the desired bound \eqref{Phi_bound} for every \(0 \le r \le 2\).
		
		\noindent\textbf{Step 4. The shifted estimate.} It remains to prove \eqref{Phi_adapted}. The covariance of the homogeneous equation under time shifts gives
		\begin{equation}
			\label{Phi_shift}
			\Phi(t,s) = e^{ikUs} \Phi(t - s, 0) e^{-ikUs}.
		\end{equation}
		Indeed, let
		\[Z(\sigma) = \Phi(\sigma,s) f, \qquad \sigma \ge s.\]
		Then \(Z\) solves
		\[\partial_{\sigma} Z(\sigma) = ikU'' \Delta_{\sigma}^{-1} Z(\sigma), \qquad Z(s) = f.\]
		For \(0 \le \tau \le t-s\), define
		\[\widetilde{Z} (\tau) = e^{-ikUs} Z(s + \tau) = e^{-ikUs} \Phi(s + \tau,s) f.\]
		Recalling that \(\Delta_t^{-1} = e^{ikUt} \Delta_k^{-1} e^{-ikUt}\), we have
		\[\Delta_{s + \tau}^{-1} = e^{ikUs} \Delta_{\tau}^{-1} e^{-ikUs},\]
		and using this identity, we get
		\[\partial_{\tau} \widetilde{Z}(\tau) = e^{-ikUs} \partial_{\tau} Z(s + \tau) = e^{-ikUs} ikU'' \Delta_{s + \tau}^{-1} Z(s + \tau) = ikU'' \Delta_{\tau}^{-1} e^{-ikUs} Z(s + \tau) = ikU'' \Delta_{\tau}^{-1} \widetilde Z(\tau).\]
		Moreover, we have \(\widetilde{Z}(0) = e^{-ikUs}f\), and thus \(\widetilde{Z}\) solves the homogeneous equation \eqref{Z} with initial time \(0\) and initial datum \(e^{-ikUs}f\). By uniqueness,
		\[\widetilde{Z}(\tau) = \Phi(\tau,0) e^{-ikUs}f.\]
		Taking \(\tau = t - s\), we find
		\[e^{-ikUs} \Phi(t,s) f = \Phi(t - s,0) e^{-ikUs} f.\]
		Since this holds for every \(f\), \eqref{Phi_shift} follows.
		
		Using \eqref{Phi_shift} and the bound \eqref{Phi_bound}, we obtain
		\begin{align*}
			\| (-\Delta_s)^r \Phi(t,s) f \|_{L^2} &= \left\| e^{ikUs} |k|^{2r} \op \left( \left\langle \frac{\eta}{k} \right\rangle^{2r} \right) e^{-ikUs} \Phi(t,s) f \right\|_{L^2} = \| |k|^{2r} \Phi(t - s,0) e^{-ikUs} f \|_{\widetilde{H}^{2r}} \\
			&\lesssim \| |k|^{2r} e^{-ikUs} f \|_{\widetilde{H}^{2r}} = \| (-\Delta_s)^r f \|_{L^2}.
		\end{align*}
		This proves \eqref{Phi_adapted} and completes the proof.
	\end{proof}
	
	\begin{remark}
		The homogeneous equation \eqref{Z} associated with \eqref{Theta} coincides, in sheared coordinates, with the equation governing the perturbation vorticity for the incompressible Euler equations linearized around the same shear flow. The inhomogeneous term in \eqref{Theta} arises from the coupling to \(R\) and therefore represents a purely compressible effect. A corresponding Sobolev stability estimate was established in \cite{Zillinger2017}. We include a self-contained proof adapted to the present operator framework; the argument is closely related to that of \cite{Zillinger2017}.
	\end{remark}
	
	\subsection{Mixing estimate}
	
	Recalling the Helmholtz decomposition \eqref{Helmholtz} and the coordinate transformation \eqref{coordinate_transformation}, we obtain
	\[V \mathcal{F}_x v_{\mathrm{sol}} = (- D_t, ik) \Delta_t^{-1} W.\]
	Therefore, to estimate \(v_{\mathrm{sol}}\), we need the decay properties of negative powers of \(-\Delta_t\). We shall use the following two-time version of the mixing estimate.
	
	\begin{lemma}
		\label{lem mixing}
		Let \(0 \le r \le 1\) and \(0 \le s \le t\). Then, for every \(f\) such that \((- \Delta_s)^r f \in L^2\),
		\[\| (-\Delta_t)^{-r} f \|_{L^2} \lesssim |k|^{-4r} \langle t - s \rangle^{-2r} \| (-\Delta_s)^r f \|_{L^2}.\]
	\end{lemma}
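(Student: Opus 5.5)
The plan is to reduce to $s=0$ by the shift covariance, prove the endpoint $r=1$ by a direct elliptic-type argument, and obtain $0<r<1$ by operator interpolation.

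\textbf{Reduction to $s=0$.} Since $(-\Delta_t)^{a}=e^{ikUt}(-\Delta_k)^{a}e^{-ikUt}$, conjugation by $e^{ikUs}$ gives
\[
(-\Delta_t)^{-r}(-\Delta_s)^{-r}=e^{ikUs}(-\Delta_{t-s})^{-r}(-\Delta_k)^{-r}e^{-ikUs},
\]
as in the proof of \eqref{Phi_shift}. Writing $f=(-\Delta_s)^{-r}g$ and $\tau=t-s$, it therefore suffices to prove
\[
\|(-\Delta_\tau)^{-r}(-\Delta_k)^{-r}\|_{L^2\to L^2}\lesssim |k|^{-4r}\langle\tau\rangle^{-2r},\qquad 0\le r\le 1.
\]
Both $-\Delta_\tau$ and $-\Delta_k$ are positive self-adjoint operators bounded below by $k^2$. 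By the Heinz--Kato (Cordes) inequality $\|A^{r}B^{r}\|\le\|AB\|^{r}$ for positive operators $A,B$ and $0\le r\le 1$, applied with $A=(-\Delta_\tau)^{-1}$ and $B=(-\Delta_k)^{-1}$, the case $r=1$ implies the whole range; the case $r=0$ is trivial. Alternatively, one could use Stein interpolation on the analytic family $(-\Delta_\tau)^{-z}(-\Delta_k)^{-z}$.

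\textbf{The case $r=1$.} Let $u=\Delta_\tau^{-1}\Delta_k^{-1}g$. Two bounds are immediate:
\[
\|\Delta_\tau u\|=\|\Delta_k^{-1}g\|\le k^{-2}\|g\|,\qquad \|\Delta_k u\|=\|\Delta_\tau^{-1}\Delta_k^{-1}\Delta_k u\|\le k^{-2}\|\Delta_k u\|,
\]
where the second is used in the form $\|\Delta_k u\|\le\|\Delta_\tau^{-1}\|\,\|g\|\le k^{-2}\|g\|$ once one notes $\Delta_k u=\Delta_k\Delta_\tau^{-1}\Delta_k^{-1}g$. Here I expect to need care, because $\Delta_k$ and $\Delta_\tau^{-1}$ do not commute. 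To sidestep this, I will prove the dual statement for $\tilde u=\Delta_k^{-1}\Delta_\tau^{-1}g$ whenever it is more convenient, since the two operators are adjoint and have the same norm. In any case the available controls are $\|\partial_y^2 u\|,\|D_\tau^2 u\|\lesssim k^{-2}\|g\|$, with first-order pieces controlled via $\|\partial_y u\|\le\|(-\Delta_k)^{1/2}u\|$.

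The key identity is $ikU'\tau=\partial_y-D_\tau$. Since $\tau$ is a scalar, this gives
\[
k^2\tau^2u=-\bigl(U'^{-1}(\partial_y-D_\tau)\bigr)^2u+(\text{terms carrying }(1/U')',\ \text{i.e. }O(\epsilon)\text{ times first-order derivatives}).
\]
Expanding the square produces $\partial_y^2 u$, $D_\tau^2 u$ and the cross terms $\partial_yD_\tau u$, $D_\tau\partial_y u$. Because $[\partial_y,D_\tau]=-ikU''\tau$, the cross terms differ from one another only by an $O(\epsilon)\,k\tau\,u$ error. That error is absorbed into the left-hand side using $\|U''\|_{W^{n,\infty}}\le\epsilon$, after bounding $|k\tau|\,\|u\|\lesssim\|\partial_y u\|+\|D_\tau u\|$ via the same identity. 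For the symmetrized cross term, integration by parts gives
\[
\re\langle\partial_y D_\tau u,w\rangle\le\tfrac12\bigl(\|\partial_y^2u\|^2+\|D_\tau^2u\|^2\bigr)+(\text{commutator errors}),
\]
more precisely $\|\partial_yD_\tau u\|^2=\langle-\partial_y^2D_\tau u,D_\tau u\rangle$, followed by commuting one $\partial_y$ past $D_\tau$. Altogether this yields
\[
k^2\tau^2\|u\|\lesssim k^{-2}\|g\|,
\]
and combined with the trivial bound $\|u\|\le k^{-4}\|g\|$ and $|k|\ge1$ we obtain $\|u\|\lesssim k^{-4}\langle\tau\rangle^{-2}\|g\|$.

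\textbf{Main obstacle.} The delicate point is the cross term $\partial_yD_\tau u$, which requires mixed control of $u$ by $(-\Delta_k)(-\Delta_\tau)$ even though the two Laplacians do not commute. I would first try the integration-by-parts route above, using the smallness of $U''$ to absorb every commutator $[\partial_y,D_\tau]=-ikU''\tau$ back into $k^2\tau^2\|u\|$. If that fails, the fallback is to work in the straightened variable $z=U(y)$, where $D_\tau$ becomes $U'(\partial_z-ik\tau)$ and the desired decay follows from the pure Couette multiplier computation, with errors again of size $O(\epsilon)$.
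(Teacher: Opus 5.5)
Your reduction to $s=0$ is correct. So is the use of Cordes' inequality $\|A^rB^r\|\le\|AB\|^r$, with $A=(-\Delta_\tau)^{-1}$ and $B=(-\Delta_k)^{-1}$, to reduce to $r=1$; this is a tidy alternative to the paper's interpolation of pullback bounds on $\widetilde H^{\sigma}$. The gap is in the $r=1$ step.

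\textbf{The gap.} Your argument through $ik\tau U'=\partial_y-D_\tau$ needs $\|\partial_y^2u\|\lesssim k^{-2}\|g\|$ and $\|D_\tau^2u\|\lesssim k^{-2}\|g\|$ for the \emph{same} function $u$. For $u=\Delta_\tau^{-1}\Delta_k^{-1}g$ you only have $\|\Delta_\tau u\|=\|\Delta_k^{-1}g\|\le k^{-2}\|g\|$. Your bound $\|\Delta_k u\|\le\|\Delta_\tau^{-1}\|\,\|g\|$ presupposes that $\Delta_k$ commutes with $\Delta_\tau^{-1}$; in other words it is an unproved uniform-in-$\tau$ estimate $\|\Delta_k\Delta_\tau^{-1}\Delta_k^{-1}\|\lesssim k^{-2}$. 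Passing to the adjoint $\tilde u=\Delta_k^{-1}\Delta_\tau^{-1}g$ only swaps which of the two controls you have, so they cannot be combined in one estimate of $k^2\tau^2\|u\|$.

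The missing bound also cannot be recovered by commuting and absorbing. The commutator $[\partial_y,\Delta_\tau^{-1}]=ik\tau\,\Delta_\tau^{-1}(U''D_\tau+D_\tau U'')\Delta_\tau^{-1}$ carries a factor $k\tau$, and $\|\partial_y\Delta_\tau^{-1}\|$ is itself of size $\langle\tau\rangle/|k|$. Expanding $\partial_y^2\Delta_\tau^{-1}$ therefore gives error bounds that grow in $\tau$, and no smallness of $\epsilon$ absorbs them uniformly in time.

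\textbf{Your fallback.} As stated it is also off. In the variable $z=U(y)$, $\Delta_\tau$ still has the variable coefficient $U'\circ U^{-1}$, so it is not a Couette multiplier. The paper never transforms $\Delta_\tau$. It writes $\|(-\Delta_\tau)^{-r}f\|=|k|^{-2r}\|e^{-ikU\tau}f\|_{\widetilde H^{-2r}}$ and pulls back only the phase and the $\widetilde H^{\pm 2r}$ norms. After the pullback, $e^{-ikz\tau}$ is an exact frequency shift, and Peetre's inequality finishes with no $O(\epsilon)$ error at all.

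\textbf{A repair of your own route.} Your identity does work if you argue bilinearly, so that each function only carries derivatives of its own type.
\begin{itemize}
\item Write $\|\Delta_\tau^{-1}\Delta_k^{-1}\|=\sup|\langle a,b\rangle|$ over $a=\Delta_k^{-1}g_1$ and $b=\Delta_\tau^{-1}g_2$ with $\|g_1\|=\|g_2\|=1$; this uses self-adjointness of $\Delta_\tau^{-1}$.
\item Substitute $b=(ik\tau U')^{-1}(\partial_y b-D_\tau b)$ and integrate the $\partial_y$ by parts. This gives $\langle a,b\rangle=(ik\tau)^{-1}\bigl(\langle\partial_y(a/U'),b\rangle+\langle a/U',D_\tau b\rangle\bigr)$.
\item Iterating once more expresses $\langle a,b\rangle$ as $(k\tau)^{-2}$ times four pairings $\langle\mathcal P a,D_\tau^{j}b\rangle$. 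Here $\mathcal P$ is a differential operator of order $2-j$ whose coefficients are built from $1/U'$ and its derivatives.
\item You have $\|\partial_y^{i}a\|\le|k|^{i-2}$ and $\|D_\tau^{j}b\|\le|k|^{j-2}$ for $i,j\le 2$. The second holds because $D_\tau$ and $\Delta_\tau$ are the $e^{ikU\tau}$-conjugates of $\partial_y$ and $\Delta_k$.
\item Hence each pairing is $\lesssim k^{-2}$. Together with the trivial bound $k^{-4}$, this gives $\|\Delta_\tau^{-1}\Delta_k^{-1}\|\lesssim k^{-4}\langle\tau\rangle^{-2}$.
\end{itemize}
This version needs only bounds on $U'$ from below and on $U'',U'''$, not their smallness.
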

	
	\begin{proof}
		Set \(\tau = t - s\) and \(f_s = e^{-ikUs} f\). Since
		\[-\Delta_t = e^{ikUs} (-\Delta_\tau) e^{-ikUs}, \qquad -\Delta_s =e^{ikUs} (-\Delta_k) e^{-ikUs},\]
		unitarity gives
		\[\| (-\Delta_t)^{-r} f\|_{L^2} =\| (-\Delta_\tau)^{-r} f_s \|_{L^2}, \qquad \| (-\Delta_s)^r f\|_{L^2} = \| (-\Delta_k)^r f_s\|_{L^2}.\]
		Thus, it remains to prove the corresponding estimate with initial time zero, namely
		\[\| (- \Delta_\tau)^{-r} f_s \|_{L^2} \lesssim |k|^{-2r} \langle \tau \rangle^{-2r} \| f_s \|_{\widetilde{H}^{2r}}.\]
		For simplicity, we omit the subscript \(s\) from \(f_s\).
		
		We need the following composition estimate.
		\begin{claim}\label{cla pullback}
			The pullback operators associated with \(U\) and \(U^{-1}\) are bounded on \(\widetilde{H}^\sigma\) for \(-2 \le \sigma \le 2\). More precisely, the following two inequalities hold
			\begin{equation}
				\label{Sobolev_composition}
				\| f \circ U \|_{\widetilde{H}^\sigma} \lesssim \| f \|_{\widetilde{H}^\sigma}, \qquad \| f \circ U^{-1} \|_{\widetilde{H}^\sigma} \lesssim \|f\|_{\widetilde{H}^\sigma}, \qquad -2 \le \sigma \le2.
			\end{equation}
		\end{claim}
		
		Recalling the definition of \((- \Delta_{\tau})^{-r}\) and using the unitarity of \(e^{ikU \tau}\) together with \(|k|\ge1\),
		\[\| (- \Delta_{\tau})^{-r} f \|_{L^2} = \| (- \Delta_k)^{-r} e^{-ikU \tau} f \|_{L^2} = |k|^{-2r} \| e^{-ikU \tau} f \|_{\widetilde{H}^{-2r}}.\]
		Set \(z = U(y)\) and \(\widetilde{f}(z) = f(U^{-1}(z))\). Using \eqref{Sobolev_composition}, we obtain
		\[\| e^{-ikU(y) \tau} f(y) \|_{\widetilde{H}_y^{-2r}} \lesssim \| e^{-ikz \tau} \widetilde{f}(z) \|_{\widetilde{H}_z^{-2r}}.\]
		By Plancherel's theorem,
		\begin{align*}
			\| e^{-ikz \tau} \widetilde{f}(z) \|_{\widetilde{H}_z^{-2r}}^2 &= \int_{\mathbb{R}} \left\langle \frac{\zeta}{k} \right\rangle^{-4r} |\widehat{\widetilde{f}}(\zeta + k \tau)|^2 d\zeta = \int_{\mathbb{R}} \left\langle \frac{\zeta}{k} - \tau \right\rangle^{-4r} |\widehat{\widetilde{f}}(\zeta)|^2 d\zeta \\
			&\lesssim \int_{\mathbb{R}} \langle \tau \rangle^{-4r} \left\langle \frac{\zeta}{k} \right\rangle^{4r} |\widehat{\widetilde{f}}(\zeta)|^2 d\zeta = \langle \tau \rangle^{-4r} \| \widetilde{f}(z) \|_{\widetilde{H}_z^{2r}}^2.
		\end{align*}
		Finally, applying \eqref{Sobolev_composition} once more gives
		\[\| \widetilde{f}(z) \|_{\widetilde{H}_z^{2r}} \lesssim \| f(y) \|_{\widetilde{H}_y^{2r}},\]
		which completes the proof.
	\end{proof}
	
	We now prove Claim \ref{cla pullback}.
	
	\begin{proof}[Proof of Claim \ref{cla pullback}]
		Since \(0 < U'_{\inf} \le U' \le U'_{\sup} < \infty\), the map \(U \colon \mathbb{R} \to \mathbb{R}\) is a diffeomorphism. For the case \(\sigma = 0\), the change of variables \(z = U(y)\) gives
		\[\| f \circ U \|_{L^2}^2 = \int_{\mathbb{R}} |f(z)|^2 \frac{1}{U'(U^{-1}(z))} dz \le \frac{1}{U'_{\inf}} \| f \|_{L^2}^2.\]
		For \(\sigma = 2\), the identities
		\[\partial_y (f \circ U) = (\partial_y f \circ U) U', \qquad \partial_y^2 (f \circ U) = (\partial_y^2 f \circ U) (U')^2 + (\partial_y f \circ U) U''\]
		yield
		\[\| f \circ U \|_{\widetilde{H}^2} \lesssim \| f \|_{\widetilde{H}^2},\]
		where we used the smallness assumption on \(U''\) together with the \(\sigma = 0\) estimate. The same argument applies to \(U^{-1}\). Interpolation then proves \eqref{Sobolev_composition} for \(0 \le \sigma \le 2\).
		
		For negative indices, let \(\mathcal{C}_U f = f \circ U\). Its \(L^2\)-adjoint is
		\[\mathcal{C}_U^*g = \left( \frac{g}{U'} \right) \circ U^{-1}.\]
		The positive-order estimates above, together with the boundedness of multiplication by \(\frac{1}{U'}\) on \(\widetilde{H}^\sigma\), imply that \(\mathcal C_U^*\) is bounded on \(\widetilde{H}^\sigma\) for \(0 \le \sigma \le 2\). Therefore, by duality, \(\mathcal C_U\) is bounded on \(\widetilde{H}^{-\sigma}\). The case of \(\mathcal C_{U^{-1}}\) is identical, proving the claim.
	\end{proof}
	
	\begin{remark}
		The mixing mechanism underlying Lemma \ref{lem mixing} is classical. For the Couette flow, the estimate follows directly from the Fourier representation. More generally, analogous negative-Sobolev mixing estimates hold for sufficiently smooth strictly monotone shear profiles whose shear rates are bounded above and below, without requiring the higher derivatives of \(U\) to be small \cite{Zillinger2017}. The two-time formulation used here follows by recentering the initial time at \(s\) and straightening the shear through the change of variables \(z = U(y)\). The slow-variation assumption on the shear rate is used in the present proof to make the composition bounds uniform, with constants depending only on \(U'_{\inf}\) and \(U'_{\sup}\). Without this assumption, the same argument applies, but the constants additionally depend on suitable higher-order derivative bounds for \(U\) and \(U^{-1}\).
	\end{remark}
	
	\subsection{Completion of the proof}
	
	We are now ready to complete the proof of the main theorem (Theorem \ref{thm main}).
	
	\subsubsection{Growth bounds for the acoustic variables}
	This part is devoted to the proof of the growth bounds of acoustic variables in \eqref{thm:acoustic growth}.
	
	\noindent\textbf{Step 1. Endpoint comparison of the weighted energies.} We first use the coordinate transformation \eqref{coordinate_transformation} to express the conclusions of Propositions \ref{prop higher} and \ref{prop lower} in terms of the original variables. For convenience, set \(\theta_k = V^{-1} \Theta\). Then
	\begin{align*}
		\widetilde{E}_{1, s} (t) \approx& \frac{1}{M^2} \left\| \op \left( \tau(\eta) \sqrt{k^2 + \eta^2} g \left( \frac{\eta}{k} + 2 U'_{\sup} (t - s) \right) \right) \rho_k (t) \right\|_{L^2}^2 + \left\| \op \left( \tau(\eta) g \left( \frac{\eta}{k} + 2 U'_{\sup} (t - s) \right) \right) \alpha_k (t) \right\|_{L^2}^2 \\
		&+ \left\| \op \left( \tau(\eta) g \left( \frac{\eta}{k} + 2 U'_{\sup} (t - s) \right) \right) \theta_k (t) \right\|_{L^2}^2 + \left\| \op \left( \tau(\eta) \left\langle \frac{\eta}{k} \right\rangle^{-1} g \left( \frac{\eta}{k} + 2 U'_{\sup} (t - s) \right) \right) (\theta_k - U' \rho_k) \right\|_{L^2}^2,
	\end{align*}
	and
	\begin{align*}
		\widetilde{E}_{-2, s} (t) \approx& \frac{1}{M^2} \left\| \op \left( \tau(\eta) \sqrt{k^2 + \eta^2} h \left( \frac{\eta}{k} + \frac{1}{2} U'_{\inf} (t - s) \right) \right) \rho_k (t) \right\|_{L^2}^2 + \left\| \op \left( \tau(\eta) h \left( \frac{\eta}{k} + \frac{1}{2} U'_{\inf} (t - s) \right) \right) \alpha_k (t) \right\|_{L^2}^2 \\
		&+ \left\| \op \left( \tau(\eta) h \left( \frac{\eta}{k} + \frac{1}{2} U'_{\inf} (t - s) \right) \right) \theta_k (t) \right\|_{L^2}^2 + \left\| \op \left( \tau(\eta) \left\langle \frac{\eta}{k} \right\rangle^{-1} h \left( \frac{\eta}{k} + \frac{1}{2} U'_{\inf} (t - s) \right) \right) (\theta_k - U' \rho_k) \right\|_{L^2}^2.
	\end{align*}
	Here \(\tau\) denotes the symbol of \(V^{-1}TSV\) and satisfies \eqref{tau}. Moreover,
	\begin{equation}
		\label{monotonicity}
		\widetilde{E}_{1, s} (s) \le \widetilde{E}_{1, s} (0), \qquad \widetilde{E}_{-2, s} (s) \le \widetilde{E}_{-2, s} (0).
	\end{equation}
	
	By the definitions of \(g\) and \(\tau\),
	\[\tau(\eta) g \left( \frac{\eta}{k} \right) \approx
	\begin{cases}
		\left\langle \frac{\eta}{k} \right\rangle^{1 - 1/\gamma}, &\qquad \frac{\eta}{k} \le 0, \\
		\left\langle \frac{\eta}{k} \right\rangle^{1/\gamma}, &\qquad \frac{\eta}{k} \ge 0.
	\end{cases}
	\]
	Thus
	\begin{equation}
		\label{g_t=s}
		\tau(\eta) g \left( \frac{\eta}{k} + 2 U'_{\sup} (t - s) \right)\bigg|_{t = s} \gtrsim \left\langle \frac{\eta}{k} \right\rangle^{-1/2},
	\end{equation}
	since \(\gamma = \frac{2}{3}\). At \(t = 0\), using \(g(\xi) \lesssim \langle \xi \rangle\), we obtain
	\begin{equation}
		\label{g_t=0}
		\tau(\eta) g \left( \frac{\eta}{k} + 2 U'_{\sup} (t - s) \right)\bigg|_{t = 0} \lesssim \left\langle \frac{\eta}{k} \right\rangle^{1/\gamma} \left\langle \frac{\eta}{k} - 2 U'_{\sup} s \right\rangle \lesssim \left\langle \frac{\eta}{k} \right\rangle^{1 + 1/\gamma} \langle s \rangle = \left\langle \frac{\eta}{k} \right\rangle^{5/2} \langle s \rangle.
	\end{equation}
	
	Similarly,
	\[\tau(\eta) h \left( \frac{\eta}{k} \right) \approx
	\begin{cases}
		\left\langle \frac{\eta}{k} \right\rangle^{-2 - 1/\gamma}, &\qquad \frac{\eta}{k} \le 0, \\
		\left\langle \frac{\eta}{k} \right\rangle^{1/\gamma}, &\qquad \frac{\eta}{k} \ge 0.
	\end{cases}
	\]
	It follows that
	\[\tau(\eta) h \left( \frac{\eta}{k} + \frac{1}{2} U'_{\inf} (t - s) \right)\bigg|_{t = s} \gtrsim \left\langle \frac{\eta}{k} \right\rangle^{-7/2}.\]
	On the other hand,
	\[h \left( \frac{\eta}{k} - \frac{1}{2} U'_{\inf} s \right) \lesssim \left\langle \frac{\eta}{k} \right\rangle^2 \langle s \rangle^{-2}.\]
	Indeed, if \(\frac{\eta}{k} - \frac{1}{2} U'_{\inf} s \le 0\), then
	\[h \left( \frac{\eta}{k} - \frac{1}{2} U'_{\inf} s \right) \approx \left\langle \frac{\eta}{k} - \frac{1}{2} U'_{\inf} s \right\rangle^{-2} \lesssim \left\langle \frac{\eta}{k} \right\rangle^2 \langle s \rangle^{-2}.\]
	If \(\frac{\eta}{k} - \frac{1}{2} U'_{\inf} s \ge 0\), then \(\frac{\eta}{k} \ge \frac{1}{2} U'_{\inf} s \ge 0\). Since \(h(\xi) \lesssim 1\), we have
	\[h \left( \frac{\eta}{k} - \frac{1}{2} U'_{\inf} s \right) \lesssim 1 \lesssim \left\langle \frac{\eta}{k} \right\rangle^2 \langle s \rangle^{-2}.\]
	Combining the two cases, we obtain
	\[\tau(\eta) h \left( \frac{\eta}{k} + \frac{1}{2} U'_{\inf} (t - s) \right)\bigg|_{t = 0} \lesssim \left\langle \frac{\eta}{k} \right\rangle^{1/\gamma} \left\langle \frac{\eta}{k} \right\rangle^2 \langle s \rangle^{-2} = \left\langle \frac{\eta}{k} \right\rangle^{7/2} \langle s \rangle^{-2}.\]
	
	\noindent\textbf{Step 2. Bounds for the initial weighted energies.} We next bound the initial weighted energies. For clarity, we restore the dependence of the weighted energies on \(k\) and denote them by \(\widetilde E_{1,s,k}\) and \(\widetilde E_{-2,s,k}\). All the norms entering these energies are \(L_y^2\)-norms for a fixed \(k \ne 0\).
	
	The multiplier bound \eqref{g_t=0}, together with \(|k| \ge 1\), immediately yields
	\[\widetilde{E}_{1,s,k} (0) \lesssim \left( \frac{1}{M^2} |k|^2 \| \rho_{in,k} \|_{H_y^5}^2 + \| \alpha_{in,k}\|_{H_y^4}^2 + \| \theta_{in,k} \|_{H_y^4}^2 + \| \theta_{in,k} - U' \rho_{in,k} \|_{H_y^4}^2\right) \langle s \rangle^2.\]
	Here
	\[\theta_{in,k} = \omega_{in,k} + U' \rho_{in,k} + U'' \partial_y \Delta_k^{-1} \rho_{in,k}.\]
	Since \(U'\) depends only on \(y\), multiplication by \(U'\) preserves the \(x\)-regularity. Moreover, multiplication by \(U'\) is bounded on \(H_y^4\), while the remaining term is controlled by the smallness assumption on \(U''\). Therefore,
	\[\| \theta_{in,k} \|_{H_y^4} \lesssim \| \omega_{in,k} \|_{H_y^4} + \frac{1}{M} \| \rho_{in,k} \|_{H_y^4}.\]
	Here and below, we use the allowed dependence of the implicit constants in \(\lesssim\) on \(U'_{\inf}\), \(U'_{\sup}\), and \(M\) to make a factor \(M^{-1}\) explicit whenever convenient for matching the normalization in \eqref{N_in}. The corresponding factors depending only on \(U'_{\inf}\), \(U'_{\sup}\), and \(M\) are absorbed into the implicit constant. Similarly,
	\[\| \theta_{in,k} - U' \rho_{in,k} \|_{H_y^4} \lesssim \| \omega_{in,k} \|_{H_y^4} + \frac{1}{M} \| \rho_{in,k} \|_{H_y^4}.\]
	
	Recalling the definition of \(\mathcal{N}_{in}\) in \eqref{N_in}, for simplicity, we denote by \(\mathcal{N}_{in,k}\) the \(k\)-th Fourier mode contribution to \(\mathcal{N}_{in}\), namely,
	\[\mathcal{N}_{in,k}^2 = \frac{1}{M^2} \| \rho_{in,k} \|_{H_y^5}^2 + \langle k \rangle^{-2} \| \alpha_{in,k} \|_{H_y^4}^2 + \langle k \rangle^{-2} \| \omega_{in,k} \|_{H_y^4}^2,\]
	so that
	\[\mathcal{N}_{in}^2 = \sum_{k \in \mathbb{Z} \setminus \{0\}} \mathcal{N}_{in,k}^2.\]
	Thus
	\begin{equation}
		\label{E_1,in}
		\widetilde{E}_{1,s,k} (0) \lesssim |k|^2 \mathcal{N}_{in,k}^2 \langle s \rangle^2.
	\end{equation}
	For the lower-order energy, the estimate of the \(h\)-multiplier at \(t = 0\) shows that
	\[\widetilde{E}_{-2,s,k} (0) \lesssim |k|^2 \mathcal{N}_{in,k}^2 \langle s \rangle^{-4}.\]
	
	\noindent\textbf{Step 3. Growth of the irrotational velocity.} The terminal-time lower bound \eqref{g_t=s} and the monotonicity of the \(g\)-weighted energy \eqref{monotonicity} imply
	\[\left\| \op \left( \left\langle \frac{\eta}{k} \right\rangle^{-1/2} \right) \alpha_k \right\|_{L_y^2}^2 (s) \lesssim \widetilde{E}_{1,s,k} (s) \le \widetilde{E}_{1,s,k} (0) \lesssim |k|^2 \mathcal{N}_{in,k}^2 \langle s \rangle^2,\]
	where we also used \eqref{E_1,in}. Similarly,
	\[\left\| \op \left( \left\langle \frac{\eta}{k} \right\rangle^{-7/2} \right) \alpha_k \right\|_{L_y^2}^2 (s) \lesssim |k|^2 \mathcal{N}_{in,k}^2 \langle s \rangle^{-4}.\]
	Interpolating between these two estimates, we obtain
	\[\left\| \op \left( \left\langle \frac{\eta}{k} \right\rangle^{-1} \right) \alpha_k \right\|_{L_y^2} (s) \lesssim |k|
	\mathcal{N}_{in,k} \langle s \rangle^{1/2}.\]
	
	By the Helmholtz decomposition \eqref{Helmholtz} and Plancherel's theorem in the \(x\)-variable,
	\[ \| v_{\mathrm{irr}} \|_{L_x^2 L_y^2}^2 (s) = \sum_{k \in \mathbb{Z} \setminus \{0\}} \| (-\Delta_k)^{-1/2} \alpha_k \|_{L_y^2}^2 (s).\]
	Since
	\[(-\Delta_k)^{-1/2} = |k|^{-1} \op \left( \left\langle \frac{\eta}{k} \right\rangle^{-1} \right),\]
	it follows that
	\[\| v_{\mathrm{irr}} \|_{L_x^2 L_y^2} (s) \lesssim \mathcal{N}_{in} \langle s\rangle^{1/2}.\]
	
	\noindent\textbf{Step 4. Growth bound for the density.} The growth estimate for \(\rho\) follows from the same interpolation argument. More generally, we record the resulting weighted estimate for later use. Interpolating between the two endpoint estimates provided by \(\widetilde E_{1,s,k}\) and \(\widetilde E_{-2,s,k}\), we obtain
	\begin{equation}
		\label{rho_weighted_bound}
		\frac{1}{M} \left\| \op \left( \left\langle \frac{\eta}{k} \right\rangle^r \right) \rho_k \right\|_{L_y^2} (s) = \frac{1}{M} |k|^{-r} \| (- \Delta_s)^{r/2} R(s) \|_{L_y^2} \lesssim \mathcal N_{in,k} \langle s \rangle^{r+1/2}
	\end{equation}
	for \(-\frac{5}{2} \le r \le \frac{1}{2}\). Taking \(r = 0\) in \eqref{rho_weighted_bound} and summing over \(k \ne 0\), we have
	\[\frac{1}{M} \| \rho \|_{L_x^2 L_y^2} (s) = \frac{1}{M} \left( \sum_{k \in \mathbb{Z} \setminus \{0\}} \| \rho_k \|_{L_y^2}^2 (s) \right)^{1/2} \lesssim \mathcal{N}_{in} \langle s \rangle^{1/2}.\]
	This proves \eqref{thm:acoustic growth}.
	
	\subsubsection{Inviscid damping of the solenoidal velocity}
	This part aims to prove the inviscid damping estimates \eqref{thm:horizontal damping}--\eqref{thm:vertical damping}, namely the decay estimates for \(v_{\mathrm{sol}}\). By the Helmholtz decomposition \eqref{Helmholtz} and the Fourier transform in \(x\),
	\[v_{\mathrm{sol},k} = (- \partial_y, ik) \Delta_k^{-1} \omega_k = V^{-1} (- D_t, ik) \Delta_t^{-1} W.\]
	We shall derive a suitable representation formula for \(W\). Recalling \eqref{Theta}, Duhamel's formula gives
	\[\Theta(t)=\Phi(t,0)\Theta(0)+\int_0^t\Phi(t,s)2ik^3U''\Delta_s^{-1}U'\Delta_s^{-1}R(s)\,ds.\]
	Since \(\Theta = W + L(t) R\), we obtain
	\begin{equation}
		\label{Duhamel}
		W(t) = \Phi(t,0) W(0) - L(t) R(t) + \Phi(t,0) L(0) R(0) + \int_{0}^{t} \Phi(t,s) 2ik^3 U'' \Delta_s^{-1} U' \Delta_s^{-1} R(s) ds.
	\end{equation}
	
	\paragraph{Estimates for the horizontal component.} We first consider \((v_{\mathrm{sol},k})_1\). Since
	\[\| V^{-1} D_t \Delta_t^{-1} W\|_{L_y^2} \le \| (-\Delta_t)^{-1/2} W \|_{L_y^2},\]
	it is enough to apply \((-\Delta_t)^{-1/2}\) to \eqref{Duhamel} and estimate the resulting terms.
	
	We now estimate the three terms outside the integral. Applying Lemma \ref{lem mixing} with \(r = \frac{1}{2}, s = 0\), and using Proposition \ref{prop Phi}, we obtain
	\[\| (- \Delta_t)^{-1/2} \Phi(t,0) W(0) \|_{L_y^2} \lesssim |k|^{-1} \langle t \rangle^{-1} \| \Phi(t,0) W(0) \|_{\widetilde{H}_y^1} \lesssim |k|^{-1} \| W(0) \|_{\widetilde{H}_y^1} \langle t \rangle^{-1} \lesssim \mathcal{N}_{in,k} \langle t \rangle^{-1},\]
	where we used \(W(0) = \omega_{in,k}\). The same argument gives
	\[\| (- \Delta_t)^{-1/2} \Phi(t,0) L(0) R(0) \|_{L_y^2} \lesssim |k|^{-1} \langle t \rangle^{-1} \| \Phi(t,0) L(0) R(0) \|_{\widetilde{H}_y^1} \lesssim |k|^{-1} \| L(0) R(0) \|_{\widetilde{H}_y^1} \langle t \rangle^{-1}.\]
	Recall that
	\[L(t) = U'+ U'' D_t \Delta_t^{-1}\]
	is an operator of order zero with principal symbol \(U'\), while its remainder is of size \(O(\epsilon)\). Therefore,
	\[\left\| \op \left( \left\langle \frac{\eta}{k} \right\rangle \right) L(0) \op \left( \left\langle \frac{\eta}{k} \right\rangle^{-1} \right) \right\|_{L_y^2 \to L_y^2} \le (1 + O(\epsilon)) U'_{\sup} \lesssim \frac{1}{M},\]
	where the compensating factor \(MU'_{\sup}\) is absorbed into the implicit constant according to the convention stated above. Thus
	\[\| (- \Delta_t)^{-1/2} \Phi(t,0) L(0) R(0) \|_{L_y^2} \lesssim \frac{1}{M} |k|^{-1} \| R(0) \|_{\widetilde{H}_y^1} \langle t \rangle^{-1} = \frac{1}{M} |k|^{-1} \| \rho_{in,k} \|_{\widetilde{H}_y^1} \langle t \rangle^{-1} \lesssim \mathcal{N}_{in,k} \langle t \rangle^{-1}.\]
	Applying the composition formula \eqref{composition_formula} and the Calder\'on--Vaillancourt Theorem \ref{thm CV}, we have
	\[\| (-\Delta_t)^{-1/2} L(t) R(t) \|_{L_y^2} \lesssim \frac{1}{M} \| (-\Delta_t)^{-1/2} R(t) \|_{L_y^2}.\]
	\eqref{rho_weighted_bound} with \(r = - 1\) yields
	\[\| (-\Delta_t)^{-1/2} L(t) R(t) \|_{L_y^2} \lesssim \mathcal{N}_{in,k} \langle t \rangle^{-1/2}.\]
	
	We now estimate the integral term. For convenience, recall the notation
	\begin{equation}
		\label{K}
		K(s) = 2ik^3U'' \Delta_s^{-1} U' \Delta_s^{-1}.
	\end{equation}
	Applying Minkowski's inequality and Lemma \ref{lem mixing} with \(r = \frac{1}{2}\), we have
	\begin{align*}
		\left\| (- \Delta_t)^{-1/2} \int_{0}^{t} \Phi(t,s) K(s) R(s) ds \right\|_{L_y^2} &\lesssim \int_{0}^{t} \left\| (- \Delta_t)^{-1/2} \Phi(t,s) K(s) R(s) \right\|_{L_y^2} ds \\
		&\lesssim \int_{0}^{t} |k|^{-2} \langle t - s \rangle^{-1} \left\| (- \Delta_s)^{1/2} \Phi(t,s) K(s) R(s) \right\|_{L_y^2} ds.
	\end{align*}
	For the integrand, the adapted propagator estimate \eqref{Phi_adapted} in Proposition \ref{prop Phi} yields
	\[\left\| (- \Delta_s)^{1/2} \Phi(t,s) K(s) R(s) \right\|_{L_y^2} \lesssim \left\| (- \Delta_s)^{1/2} K(s) R(s) \right\|_{L_y^2}.\]
	By \eqref{K}, the operator \(K(s)\) is of order \(-4\), with principal coefficient \(2ik^3U'U''\). Therefore,
	\[\| (-\Delta_s)^{1/2} K(s) R(s) \|_{L_y^2} \lesssim \epsilon U'_{\sup} \| k^3 (-\Delta_s)^{-3/2} R(s) \|_{L_y^2} \lesssim \frac{1}{M} \| k^3 (-\Delta_s)^{-3/2} R(s)\|_{L_y^2}.\]
	Here, in the last estimate, we wrote \(\epsilon U'_{\sup} = (\epsilon MU'_{\sup}) M^{-1}\) and absorbed the factor \(\epsilon MU'_{\sup}\) into the implicit constant, which is allowed to depend on \(U'_{\inf}\), \(U'_{\sup}\) and \(M\). The weighted estimate \eqref{rho_weighted_bound} with \(r = - 2\) gives
	\[\frac{1}{M} \| k^3 (- \Delta_s)^{-3/2} R(s) \|_{L_y^2} \lesssim \frac{1}{M} \| k^2 (- \Delta_s)^{-1} R(s) \|_{L_y^2} \lesssim \mathcal N_{in,k} \langle s \rangle^{-3/2}.\]
	Thus
	\[\left\| (- \Delta_t)^{-1/2} \int_{0}^{t} \Phi(t,s) K(s) R(s) ds \right\|_{L_y^2} \lesssim \int_{0}^{t} |k|^{-2} \langle t - s \rangle^{-1} \mathcal N_{in,k} \langle s \rangle^{-3/2} ds \lesssim \mathcal N_{in,k} \langle t \rangle^{-1}.\]
	
	Combining this estimate with the bounds for the remaining terms in \eqref{Duhamel}, and then applying Plancherel's identity in \(x\), we conclude that
	\[\| (v_{\mathrm{sol}})_1 \|_{L_x^2 L_y^2} (t) \lesssim \mathcal{N}_{in} \langle t \rangle^{-1/2}.\]
	
	\paragraph{Estimates for the vertical component.} We finally consider the second component of \(v_{\mathrm{sol}}\). Recall that
	\[\| (v_{\mathrm{sol},k})_2 \|_{L_y^2} = \| V^{-1}ik \Delta_t^{-1} W \|_{L_y^2} = \| k \Delta_t^{-1} W\|_{L_y^2}.\]
	The first three terms in \eqref{Duhamel} are treated in the same way as for the first component. Indeed, Lemma \ref{lem mixing} with \(r = 1, s = 0\), together with Proposition \ref{prop Phi}, gives
	\[\| k \Delta_t^{-1} \Phi(t,0) W(0) \|_{L_y^2}
	+ \| k \Delta_t^{-1} \Phi(t,0) L(0) R(0) \|_{L_y^2} \lesssim \mathcal{N}_{in,k} \langle t \rangle^{-2}.\]
	Moreover, the weighted estimate \eqref{rho_weighted_bound} with \(r = -2\) yields
	\[\| k \Delta_t^{-1} L(t) R(t)\|_{L_y^2} \lesssim \mathcal{N}_{in,k} \langle t \rangle^{-3/2}.\]
	
	It remains to estimate the integral term. By Minkowski's inequality and Lemma \ref{lem mixing} with \(r = 1\), it follows that
	\[\left\| k \Delta_t^{-1} \int_{0}^{t} \Phi(t,s) K(s) R(s) ds \right\|_{L_y^2} \lesssim \int_{0}^{t} |k|^{-3} \langle t - s \rangle^{-2} \|(-\Delta_s) \Phi(t,s) K(s) R(s) \|_{L_y^2} ds.\]
	Proposition \ref{prop Phi} gives
	\[\| (-\Delta_s) \Phi(t,s) K(s) R(s) \|_{L_y^2} \lesssim \| (-\Delta_s) K(s) R(s) \|_{L_y^2}.\]
	By \eqref{K} and the same symbolic calculations as above,
	\[\| (-\Delta_s) K(s) R(s) \|_{L_y^2} \lesssim \frac{1}{M} \| k^3 (-\Delta_s)^{-1} R(s) \|_{L_y^2} \lesssim |k| \mathcal{N}_{in,k} \langle s \rangle^{-3/2},\]
	where the last inequality follows from \eqref{rho_weighted_bound} with \(r = -2\). Consequently,
	\[\left\| k \Delta_t^{-1} \int_{0}^{t} \Phi(t,s) K(s) R(s) ds \right\|_{L_y^2} \lesssim \int_{0}^{t} |k|^{-3} \langle t - s \rangle^{-2} |k| \mathcal{N}_{in,k} \langle s \rangle^{-3/2} ds \lesssim \mathcal{N}_{in,k} \langle t \rangle^{-3/2}.\]
	
	Combining these estimates and applying Plancherel's identity in \(x\), we conclude that
	\[\| (v_{\mathrm{sol}})_2 \|_{L_x^2 L_y^2} (t) \lesssim \mathcal{N}_{in} \langle t \rangle^{-3/2}.\]
	This completes the proof of Theorem \ref{thm main}.

	\printbibliography
	
\end{document}